\newcommand{\bol}{\boldsymbol}
\newcommand{\ney}{\boldsymbol{y}}                          
\newcommand{\nex}{\boldsymbol{x}}           
\newcommand{\bnex}{\bold{x}}
\newcommand{\ner}{\boldsymbol{r}}
\newcommand{\de}{\,\mathrm{d}}                               
\newcommand{\e}{\operatorname{e}}                               
\newcommand{\inc}{\mathrm{inc}}
\newcommand{\andtext}{\quad\mbox{and}\quad}
\newcommand{\p}{\partial}
\newcommand{\real}{\mathrm{Re}\,}    
\newcommand{\imag}{\mathrm{Im}\,}
\newcommand{\lf}{\left}
\newcommand{\rg}{\right}
\newcommand{\R}{\mathbb{R}}       
\newcommand{\C}{\mathbb{C}}
\newcommand{\nor}{\boldsymbol n}
\newtheorem{theorem}{Theorem}[section]
\newtheorem{lemma}[theorem]{Lemma}
\newtheorem{remark}[theorem]{Remark}
\title{Smoothed Combined Field Integral Equations for \\
 Exterior Helmholtz  Problems }
\author[1]{Carlos P\'erez-Arancibia\thanks{cperezar@mit.edu}}
\affil[1]{\small{Department of Mathematics, Massachusetts Institute of Technology}}
\date{\today}
\begin{document}
\maketitle

\begin{abstract}
This paper presents smoothed combined field integral equations for the  solution of Dirichlet and Neumann exterior Helmholtz problems. The integral equations introduced in this paper are smooth in the sense that they only involve continuously differentiable integrands in both Dirichlet and Neumann cases. These integral equations coincide with the well-known combined field equations and are therefore uniquely solvable for all frequencies. In particular, a novel regularization of the hypersingular operator is obtained, which, unlike regularizations based on Maue's integration-by-parts formula, does not give rise to involved Cauchy principal value integrals. 
The smoothed integral operators and layer potentials, on the other hand, can be numerically evaluated at target points that are arbitrarily close to the boundary without severely compromising their accuracy. A variety of numerical examples in two spatial dimensions that consider three different Nystr\"om discretizations for smooth domains and domains with corners---one of which is based on direct application of the trapezoidal rule---demonstrates the effectiveness of the proposed integral approach. In certain aspects, this work extends to the uniquely solvable Dirichlet and Neumann combined field integral equations, the ideas presented in the recent contribution R. Soc. Open Sci. 2(140520), 2015.\\
\newline 
  \textbf{Keywords}: Combined field integral equation, regularization, hypersingular operator, Helmholtz equation, Nystr\"om discretization\\
   

\end{abstract}

\section{Introduction}\label{sec:intro}

As is well known,  boundary integral equation (BIE) methods, such as boundary element methods~\cite{bonnet1999boundary,Sauter2010} as well as  Nystr\"om methods~\cite{Bruno:2001ima,Kress:1990vm,Kress:1995,KUSSMAUL:1969,MARTENSEN:1963}, provide several advantages over methods based on volume discretization of the computational domain, such as finite difference~\cite{taflove1995computational} and finite element methods~\cite{JIN:2002}, for  the solution of exterior Helmholtz problems. For example, BIE methods can easily handle unbounded domains and radiation conditions at infinity without recourse to approximate absorbing/transparent boundary conditions for truncation of the computational domain~\cite{Givoli2013}. Additionally, BIE methods are based on discretization of the relevant physical boundaries, and they therefore give rise to linear systems of reduced dimensionality---which, although dense, can be efficiently solved by means of accelerated iterative linear algebra solvers~\cite{Bebendorf2008,Bruno:2001ima,Greengard1998,phillips1997precorrected}.

One of the main issues associated with the use of BIE methods is the numerical evaluation of the challenging singular, weakly-singular and nearly-singular integrals that are inherent to the integral operators and layer potentials upon which BIE methods are based on. In two spatial dimensions, for example,  the single-layer, double-layer and adjoint double-layer operators feature weak $O(\log|\nex-\ney|)$ kernel singularities, while the hypersingular operator  features a much more stronger $O(|\nex-\ney|^{-2})+O(\log|\nex-\ney|)$ kernel singularity as $\ney\to\nex$, where $\nex$ and $\ney$ denote points on the (assumed smooth) boundary. As is known, however,  application of the standard regularization procedure, which was originally proposed by Maue~\cite{Maue:1949in}, enables the hypersingular operator to be expressed in terms of a Cauchy principal value integral that exhibits a $O(|\nex-\ney|^{-1})+O(\log|\nex-\ney|)$ kernel singularity as~$\ney\to\nex$. Nearly-singular integrals arise, on the other hand, when integral operators and layer potentials are evaluated at target points close to but not on the boundary of the domain. All these issues greatly hinder the use of BIE methods, as numerical evaluation of integral operators and layer potentials requires special treatment of the kernel singularities by means of specialized quadrature rules and/or semi-analytical techniques for which there is a vast literature that will not be reviewed here; cf.~\cite{atkinson1997numerical,Barnett:2014tq,bonnet1999boundary,Bruno:2001ima,Bruno:2012dx,COLTON:2012,dominguez2014nystrom,Kress:1995,Kress:1990vm,klockner2013quadrature,saranen2013periodic,Sauter2010}.

This paper presents uniquely solvable BIEs for the solution of exterior Helmholtz problems with Dirichlet and Neumann boundary conditions, that arise from  electromagnetic scattering by perfectly conducting obstacles in two spatial dimensions. The BIEs introduced in this contribution, dubbed \emph{smoothed combined field integral equations}, coincide with the common combined field integral equations~(CFIEs)~\cite{Burton1971Application,panish1965question,brakhage1965dirichletsche,leis1965dirichletschen} in both Dirichlet and Nuemann cases. Unlike the common CFIEs, however, they are given in terms of operators expressed as integrals of continuously differentiable functions (they indeed exhibit a mild singularity of the form $O(|\nex-\ney|^2\log|\nex-\ney|)$ as $\ney\to\nex$). In particular, we hereby introduce a novel regularization of the  hypersingular operator that involves neither Cauchy principal value nor weakly singular integrals. The proposed smoothing procedure is also utilized to regularize nearly-singular integrals that arise from evaluation of layer potentials at target points near the boundary, and from integral operators that result from integral formulations of problems involving two or more obstacles close to each other. 

Our smoothing procedure relies on the existence of certain homogenous solutions $p_j$, $j=0,\ldots,N$ ($N>0$) of the Helmholtz equation, that we referred to as~\emph{smoothing functions}. Such functions allow a sufficiently smooth density  $\varphi$ to be expressed as  $\varphi(\ney)=\sum_{j=0}^N\p_s^j\varphi(\nex)p_j(\ney|\nex)+O(|\nex-\ney|^{N+1})$ and $i\eta\varphi(\ney)=\sum_{j=0}^N\p_s^j\varphi(\nex)\p_n p_j(\ney|\nex)+O(|\nex-\ney|^{N+1})$ where $\nex$ is a given point on the boundary $\Gamma$ and where $\eta>0$ is a constant  (the symbols  $\p_s$ and $\p_n$ denote  tangential and normal derivatives on $\Gamma$).  Calling $K=K_1-i\eta K_2$  the kernel of  the Dirichlet or Neumann combined field operators, we thus  can write
\begin{equation*}\begin{split}
\int_{\Gamma}K(\nex,\ney)\varphi(\ney)\de s(\ney) =&\ \sum_{j=0}^{N}\p^j_s\varphi(\nex)\int_{\Gamma}\{K_1(\nex,\ney)p_{j}(\ney|\nex)-K_2(\nex,\ney)\p_np_{j}(\ney|\nex)\}\de s(\ney)\\
&\ +\int_{\Gamma}\{K_1(\nex,\ney)\rho_1(\ney|\nex)-K_2(\nex,\ney)\rho_2(\ney|\nex)\}\de s(\ney)\label{eq:intro}\end{split}
\end{equation*}
where   $\rho_1(\ney|\nex)=\varphi(\ney)-\sum_{j=0}^N\p_s^j\varphi(\nex)p_j(\ney|\nex)$ and $\rho_2(\ney|\nex)=i\eta\varphi(\ney)-\sum_{j=0}^N\p_s^j\varphi(\nex)\p_np_j(\ney|\nex)$. As it turns out, Green's third identity provides closed-form expressions for the boundary integrals inside the sum. Therefore,  the operator  $\int_{\Gamma}K(\nex,\ney)\varphi(\ney)\de s(\ney) $ can be easily evaluated by integrating the smoothed mildly singular functions $K_j(\nex,\ney)\rho_{j}(\ney|\nex)$, $j=1,2$---which satisfy $K_j(\nex,\ney)\rho_{j}(\ney|\nex)=O(|\nex-\ney|^{N+1}\log|\nex-\ney|)$---wherever on $\Gamma$ the tangential derivatives $\p_s^j\varphi(\nex)$, $j=1,\ldots,N,$ exist. In this paper we present a smoothing procedure that considers functions $p_j$, $j=0,1$, that are obtained explicitly as linear combinations of plane waves.

A smoothing procedure similar in nature to the one presented here was originally introduced in~\cite{Klaseboer:2012ks} for the solution of the Laplace equation and was later extended in~\cite{sun2015boundary} to the Helmholtz equation in three spatial dimensions. Both contributions consider integral equations derived from direct use of Green's third identity. As such, the associated smoothed integral equations for the Helmholtz equation suffer from spurious resonances in both Dirichlet and Neumann cases~\cite{COLTON:1983}.  The smoothing procedure introduced in those references, on the other hand, which provides a smoothing factor that turns weakly singular integrands (in three-dimensions) into bounded but discontinuous functions, does not suffice for the regularization of the hypersingular operator that appears in the combined field integral equation for the Neumann problem.

The structure of this paper is as follows. Section~\ref{sec:prelim} presents the boundary value problems considered in this paper and reviews the definition and main properties of the layer potentials and boundary integral operators. Section~\ref{eq:reg_IE}, subsequently,  introduces the smoothed CFIE  formulations for both Dirichlet and Nuemann problems. Details on the construction of the smoothing functions are provided in Section~\ref{sec:reg_functions}. Finally, Section~\ref{sec:numerics} presents a variety of numerical examples in two spatial dimensions that include three different Nystr\"om discretizations for smooth domains and domains with corners.

\section{Preliminaries}\label{sec:prelim}
This paper  considers exterior Helmholtz boundary value problems that arise as an incident TE- or TM-polarized electromagnetic wave impinges on the  surface of an axially symmetric perfect electric conductor with cross section $\Omega\subset\R^2$ and boundary $\p\Omega=\Gamma$. In TE-polarization the scattered field $u_D:\R^2\setminus\Omega\to\C$ is solution of the exterior Dirichlet problem
\begin{equation}
\begin{array}{rclll}
\Delta u_D+k^2 u_D &=& 0&\mbox{in}& \R^2\setminus\overline\Omega,\medskip\\
 u_D &=& - u^\inc&\mbox{on}&\Gamma,\medskip\\
\multicolumn{1}{c}{\displaystyle\lim_{|\ner|\to\infty}\sqrt{|\ner|}\left(\frac{\p u_D}{\p |\ner|}-iku_D\right) }&=&0,
\end{array}\label{eq:DEP}
\end{equation}
where  $k\in \C$, $\imag k\geq 0$, $\real k>0$ denotes the wavenumber of the unbounded medium surrounding~$\Omega$. In TM-polarization, on the other hand,  the scattered field $u_N:\R^2\setminus\Omega\to\C$  is solution the exterior Neumann problem
\begin{equation}
\begin{array}{rclll}
\Delta u_N+k^2 u_N &=& 0&\mbox{in}& \R^2\setminus\overline\Omega,\medskip\\
\displaystyle\p_n u_N &=& -\p_n u^\inc&\mbox{on}&\Gamma,\medskip\\
\multicolumn{1}{c}{\displaystyle\lim_{|\ner|\to\infty}\sqrt{|\ner|}\left(\frac{\p u_N}{\p |\ner|}-iku_N\right)}&=&0,
\end{array}\label{eq:NEP}
\end{equation}
where the symbol $\p_n$ in~\eqref{eq:NEP} denotes the exterior normal derivative on the boundary $\Gamma$.  
 
 As is well-known~(cf.~\cite{COLTON:1983}) for a continuous boundary data the exterior Dirichlet~\eqref{eq:DEP} (resp. Neumann ~\eqref{eq:NEP}) admits a unique solution $u_D$ (resp. $u_N$) for all wavenumbers $k\in\C$, $\real k>0$, $\imag k\geq 0$.  
 
Given a density function $\varphi:\Gamma\to\C$ we define  the single- and double-layer potentials as
\begin{equation}
\mathcal S[\varphi](\ner) = \int_\Gamma G(\ner,\ney)\varphi(\ney)\de s({\ney})\andtext \mathcal D[\varphi](\ner) = \int_{\Gamma}\frac{\p G(\ner,\ney)}{\p \nor(\ney)}\varphi(\ney)\de s({\ney}),\quad \ner\in\R^2\setminus\Gamma,\label{eq:pot}
\end{equation}
respectively, where $G(\ner,\ney)=\frac{i}{4}H_0^{(1)}(k|\ner-\ney|)$ is the free-space Green function for the Helmholtz equation. Evaluation of the layer potentials~\eqref{eq:pot} and their  exterior normal derivatives on $\Gamma$ yields the jump relations~\cite{COLTON:1983}
\begin{equation}\begin{split}
\mathcal S[\varphi] = S[\varphi],\quad \p_n \mathcal S[\varphi] = -\frac{\varphi}{2} +K'[\varphi],\quad
\mathcal D[\varphi] = \frac{\varphi}{2}+K[\varphi]\andtext \p_n \mathcal D[\varphi] = N[\varphi],\\
\end{split}\label{eq:jump}
 \end{equation}
 which are expressed in terms of the single-layer ($S$), double-layer ($K$), adjoint double-layer $(K')$ and hypersingular $(H)$  operators. These operators are given by the integral expressions
 \begin{subequations}\begin{eqnarray}
S[\varphi](\nex) &=& \int_{\Gamma} G(\nex,\ney)\varphi(\ney)\de s(\ney),\label{eq:single} \\
K[\varphi](\nex) &=& \int_{\Gamma} \frac{\p G(\nex,\ney)}{\p\nor(\ney)}\varphi(\ney)\de s(\ney),\label{eq:double}\\
K'[\varphi](\nex) &=& \int_{\Gamma} \frac{\p G(\nex,\ney)}{\p\nor(\nex)}\varphi(\ney)\de s(\ney),\label{eq:adouble}\\
N[\varphi](\nex) &=& \mathrm{f.p.}\int_{\Gamma} \frac{\p^2 G(\nex,\ney)}{\p\nor(\nex)\p\nor(\ney)}\varphi(\ner)\de s(\ney),\label{eq:hyper}
\end{eqnarray}\label{eq:int_op}\end{subequations}
for  $\nex\in \Gamma$.
Note that the integral in the definition of the hypersingular operator~\eqref{eq:hyper} must be understood as a Hadamard finite-part integral which, upon integration by parts, can expressed as
\begin{equation}
N[\varphi](\nex) = k^2\int_\Gamma G(\nex,\ney)\nor(\nex)\cdot\nor(\ney)\varphi(\ney)\de s(\ney) + \mathrm{p.v.}\int_{\Gamma}\p_s G(\nex,\ney)\p_s\varphi(\ney)\de s(\ney)\label{eq:Maue}
\end{equation}
in terms of a Cauchy principal value integral and the tangential derivative $\p_{s}$ of the surface density~$\varphi$ on~$\Gamma$. The expression~\eqref{eq:Maue}  is sometimes called Maue's integration by parts formula~\cite{Maue:1949in} and can be interpreted as regularization of the integral operator in the sense that it involves integrands are ``smoother" (see Section~\ref{sec:intro}).

We assume, for the time being,  that the boundary of the PEC obstacle $\Gamma=\p\Omega$ admits a real analytic $2\pi$-periodic parametric representation
\begin{equation}\Gamma=\{\bold x(t): t\in[0, 2\pi)\},\label{eq:param}\end{equation}
where $|\bold x'(t)|\neq 0$ for all $t\in[0,2\pi)$. (This smoothness assumption on $\Gamma$ is relaxed in Section~\ref{sec:corners} where numerical examples for domains with corners are considered.) Utilizing  the boundary parameterization we define the Sobolev space $H^s(\Gamma)$, $s>0$, as the space of functions $\varphi\in L^2(\Gamma)$ such that $\varphi\circ \bold x\in H^s[0,2\pi]$, where $H^s[0,2\pi] = \{v\in L^2[0,2\pi]:\|v\|_s<\infty\}$ (see~\cite[Chapter 8]{kress2012linear} or~\cite[Section 5.3]{saranen2013periodic} for a more detailed definition of this space and its properties). In particular, the  integral operators~\eqref{eq:int_op}: $S:H^{s}(\Gamma)\to H^{s+1}(\Gamma)$, $K',K:H^{s}(\Gamma)\to H^{s+3}(\Gamma)$ and $N:H^{s}(\Gamma)\to H^{s-1}(\Gamma)$ are continuous for all $s>0$~\cite{Mclean2000Strongly}.

In order to solve the exterior boundary value problems we look for solutions given by the combined double- and single-layer potential~\cite{brakhage1965dirichletsche,leis1965dirichletschen,panish1965question} 
\begin{equation}
u(\ner) =(\mathcal D-i\eta\mathcal S)[\varphi](\ner), \qquad \ner\in\R^2\setminus\overline\Omega,\label{eq:CFP}
\end{equation}
 where the density function~$\varphi$ can be determined by matching the potential $u$ or its normal derivative $\p_n u$ with the appropriate boundary data on $\Gamma$. 
 
Therefore, in the case of Dirichlet problem~\eqref{eq:DEP} we  obtain the Dirichlet Combined Field Integral Equation (D-CFIE)
\begin{equation}
\left(\frac{I}{2}+K-i\eta S\right)[\varphi] = -u^\inc\quad \mbox{on}\quad \Gamma\label{eq:IE_Dirichlet}
\end{equation}
for an unknown density function $\varphi$, which was obtained evaluating  the potential~\eqref{eq:CFP} on~$\Gamma$ using the jump conditions~\eqref{eq:jump}.   Since $K$ and $S$ are compact operators on $H^s(\Gamma)$ we have that the combined field integral operator $K-i\eta S$ is also a compact operator on $H^s(\Gamma)$, and thus~\eqref{eq:IE_Dirichlet} is a Fredholm integral equation of the second second-kind.  Therefore, the well-posedness of the D-CFIE follows from the fact that \eqref{eq:IE_Dirichlet} admits at most one solution for all wavenumbers $\real k>0$, $\imag k\geq 0$, provided $\eta>0$~\cite[Theorem 3.3]{COLTON:1983}.

In the case of the Neumann problem~\eqref{eq:NEP}, on the other hand, evaluation of the normal derivative of the combined potential~\eqref{eq:CFP} on $\Gamma$ yields  the Neumann Combined Field Integral Equation (N-CFIE) 
\begin{equation}
\left(\frac{i\eta }{2}I+N-i\eta K'\right)[\varphi] = -\p_nu^\inc\quad \mbox{on}\quad \Gamma,\label{eq:IE_Neumann}
\end{equation}
for an unknown density function $\varphi$. Here,  $\frac{i\eta }{2}I+N-i\eta K':H^{s}\to H^{s-1}$ is not a compact operator on $H^s(\Gamma)$. As is known, however,  the N-CFIE admits at most one solution $\varphi$ for all wavenumbers $\real k>0$, $\imag k\geq 0$, provided $\eta>0$~\cite[Theorem 3.34]{COLTON:1983}.

\section{Smoothed combined field integral equation formulations}\label{eq:reg_IE}
In this section we introduce  smoothed  versions of the combined potential~\eqref{eq:CFP} and associated CFIEs~\eqref{eq:IE_Dirichlet} and~\eqref{eq:IE_Neumann}.

\subsection{Dirichlet problem}
Let us assume that we are given two  $C^\infty$-smooth functions $p_0(\:\cdot\:|\nex_0)$ and $p_1(\:\cdot\:|\nex_0)$ that satisfy
\begin{equation}\begin{gathered}
\Delta p_0(\ner|\nex_0) + k^2 p_0(\ner|\nex_0) =0, \qquad \ner\in\R^2, \\
p_0(\ner|\nex_0) =1,\quad  \p_n p_0(\ner|\nex_0) = i\eta,\quad \p_s p_0(\ner|\nex_0) =0\quad\mbox{and }\quad \p_s\p_n p_0(\ner|\nex_0) = 0\quad\mbox{at}\quad \ner=\nex_0,\label{eq:funct_p}\end{gathered}
\end{equation}
and
\begin{equation}\begin{gathered}
\Delta p_1(\ner|\nex_0) + k^2 p_1(\ner|\nex_0) =0, \qquad \ner\in\R^2, \\
p_1(\ner|\nex_0) =0,\quad  \p_n p_1(\ner|\nex_0) = 0,\quad \p_s p_1(\ner|\nex_0) =1\quad\mbox{and }\quad \p_s\p_n p_1(\ner|\nex_0) = i\eta\quad\mbox{at}\quad \ner=\nex_0,\label{eq:funct_q}\end{gathered}
\end{equation}
respectively, where $\nex_0$ is given a point on the boundary $\Gamma$.  Assume further that both functions $p_0(\:\cdot\:|\nex_0)$ and $p_1(\:\cdot\:|\nex_0)$ are given by certain linear combinations of plane waves. (Expressions for such functions are given in Section~\ref{sec:reg_functions}.) Therefore, Green's third identity together with standard stationary phase arguments yield the relations
\begin{subequations}\begin{eqnarray}
\mathcal D\lf[p_0(\:\cdot\:|\nex_0)\rg](\ner)-\mathcal S\left[\p_n p_0(\:\cdot\:|\nex_0)\right](\ner)&=&0,\qquad \ner\in\R^2\setminus\overline\Omega,\label{eq:GP}\\
\mathcal D\lf[p_1(\:\cdot\:|\nex_0)\rg](\ner)-\mathcal S\left[\p_n p_1(\:\cdot\:|\nex_0)\right](\ner)&=&0,\qquad \ner\in\R^2\setminus\overline\Omega.\label{eq:GQ}
\end{eqnarray}\end{subequations}


Multiplying~\eqref{eq:GP} and~\eqref{eq:GQ} through by $\varphi(\nex_0)$ and~$\p_s\varphi(\nex_0)$, respectively,  and  subtracting the resulting expressions from the combined potential~\eqref{eq:CFP},  we obtain 
\begin{equation}\begin{split}
u(\ner) =&\ \mathcal D\left[\varphi-\varphi(\nex_0)p_0(\:\cdot\:|\nex_0)-\p_s\varphi(\nex_0)p_1(\:\cdot\:|\nex_0)\right](\ner) \\
&\ -\mathcal S\lf[i\eta\varphi-\varphi(\nex_0)\p_np_0(\:\cdot\:|\nex_0)-\p_s\varphi(\nex_0)\p_np_1(\:\cdot\:|\nex_0)\rg](\ner),\quad \ner\in\R^2\setminus\overline\Omega,\ \nex_0\in\Gamma.\label{eq:mod_rep_formula}\end{split}
\end{equation}
Evaluating the potential~\eqref{eq:mod_rep_formula} on  $\Gamma$, using the jump conditions~\eqref{eq:jump}, we  get  
\begin{equation*}
\begin{split}
u(\nex) =&\ \frac{\varphi(\nex)-\varphi(\nex_0)p_0(\nex|\nex_0)-\p_s\varphi(\nex_0)p_1(\nex|\nex_0)}{2}+ K\left[\varphi-\varphi(\nex_0)p_0(\:\cdot\:|\nex_0)-\p_s\varphi(\nex_0)p_1(\:\cdot\:|\nex_0)\right](\nex)\\
&- S\lf[i\eta\varphi-\varphi(\nex_0)\p_np_0(\:\cdot\:|\nex_0)-\p_s\varphi(\nex_0)\p_np_1(\:\cdot\:|\nex_0)\rg](\nex)\quad\mbox{for all }\quad \nex,\nex_0\in\Gamma,
\end{split}
\end{equation*}
where the operators $K$ and $S$ are defined in~\eqref{eq:double} and~\eqref{eq:single}, respectively. 
Therefore, selecting $\nex_0=\nex$ in the relation above and using the identities $p_0(\nex|\nex)=1$ and $p_1(\nex|\nex)=0$,  we obtain  the Smoothed Dirichlet  Combined Field Integral Equation (SD-CFIE)
\begin{equation}
\left(K\circ R_D- S\circ R_S\right)[\varphi] = -u^\inc\quad \mbox{on}\quad \Gamma,\label{eq:IE_SDirichlet}
\end{equation}
for the unknown density function $\varphi$. Here,  the operators $R_D$ and $R_S$ are explicitly defined in terms of the smoothing functions $p_0$ and $p_1$  by	
\begin{equation}\begin{split}
R_D[\varphi|\nex](\ney)=&\ \varphi(\ney)-\varphi(\nex)p_0(\ney|\nex)-\p_s\varphi(\nex)p_1(\ney|\nex)\qquad\andtext \\
R_S[\varphi|\nex](\ney)=&\ i\eta\varphi(\ney)-\varphi(\nex)\p _n p_0(\ney|\nex)-\p_s\varphi(\nex)\p _n p_1(\ney|\nex).\label{eq:reg_op}
\end{split}
\end{equation}
The following lemma  establishes the essential property of $R_D$ and $R_S$:

\begin{lemma}\label{eq:lem_important}
Given $\nex\in\Gamma$, the operators  $R_D[\:\cdot\:|\nex]$ $R_S[\:\cdot\:|\nex]:H^s(\Gamma)\to H^{s}(\Gamma)$ introduced in~\eqref{eq:reg_op}, are well defined for all $s>3/2$. Furthermore, they satisfy 
\begin{equation}
R_D[\varphi|\nex](\ney) = O(|\nex-\ney|^2)\andtext R_S[\varphi|\nex](\ney) = O(|\nex-\ney|^2)\quad\mbox{as}\quad\ney\to\nex,\ \ (\ney\in\Gamma),\label{eq:estimate}\end{equation}
for all $s>5/2$.
\end{lemma}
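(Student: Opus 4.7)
My plan is to split the statement into a well\nobreakdash-definedness part, which only needs $\varphi$ to be $C^1$ on $\Gamma$, and a pointwise asymptotic estimate, which needs $\varphi$ to be $C^2$ on $\Gamma$. Both pieces will reduce to the standard one\nobreakdash-dimensional Sobolev embedding transported to $\Gamma$ via the real\nobreakdash-analytic parameterization~\eqref{eq:param}, plus a two\nobreakdash-term Taylor expansion along $\Gamma$ in arc length.

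For well\nobreakdash-definedness, I would invoke the embedding $H^s[0,2\pi]\hookrightarrow C^k[0,2\pi]$ for $k<s-1/2$, which via~\eqref{eq:param} identifies $H^s(\Gamma)$ with a subspace of $C^k(\Gamma)$ under the same threshold. Thus $s>3/2$ already gives $\varphi\in C^1(\Gamma)$, so the pointwise values $\varphi(\nex)$ and $\p_s\varphi(\nex)$ appearing in~\eqref{eq:reg_op} are well defined. To conclude that $R_D[\varphi|\nex](\cdot)$ and $R_S[\varphi|\nex](\cdot)$ lie in $H^s(\Gamma)$, I would observe that, for fixed $\nex\in\Gamma$, the functions $p_0(\cdot|\nex),p_1(\cdot|\nex),\p_n p_0(\cdot|\nex),\p_n p_1(\cdot|\nex)$ restrict to $\Gamma$ as $C^\infty$ functions (they are finite linear combinations of plane waves) and enter~\eqref{eq:reg_op} multiplied by the scalars $\varphi(\nex)$ and $\p_s\varphi(\nex)$; hence each $R_\bullet[\varphi|\nex]$ is the sum of $\varphi$ (respectively $i\eta\varphi$) and a smooth function of $\ney$, which belongs to $H^s(\Gamma)$.

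For the asymptotic estimate I would assume $s>5/2$ so that the same embedding yields $\varphi\in C^2(\Gamma)$, parameterize $\Gamma$ by arc length near $\nex$, and let $\sigma=s(\ney)-s(\nex)$. A two\nobreakdash-term Taylor expansion gives $\varphi(\ney)=\varphi(\nex)+\p_s\varphi(\nex)\,\sigma+O(\sigma^2)$. Since each $p_j(\cdot|\nex)$ is $C^\infty$ on $\R^2$, the analogous two\nobreakdash-term expansions of $p_0,p_1,\p_n p_0,\p_n p_1$ restricted to $\Gamma$ collapse, by the eight interpolation conditions in~\eqref{eq:funct_p}--\eqref{eq:funct_q}, to
\[
p_0(\ney|\nex)=1+O(\sigma^2),\quad p_1(\ney|\nex)=\sigma+O(\sigma^2),\quad \p_np_0(\ney|\nex)=i\eta+O(\sigma^2),\quad \p_np_1(\ney|\nex)=i\eta\,\sigma+O(\sigma^2).
\]
Substituting into~\eqref{eq:reg_op}, the constant and $\sigma$\nobreakdash-linear contributions cancel identically in both $R_D$ and $R_S$, leaving an $O(\sigma^2)$ remainder. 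Finally, the smoothness of $\bol x$ gives $|\sigma|=|\nex-\ney|+O(|\nex-\ney|^3)$, which converts $O(\sigma^2)$ into $O(|\nex-\ney|^2)$ and yields~\eqref{eq:estimate}.

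The main obstacle will be purely bookkeeping: I will need to check that each of the four scalar conditions in~\eqref{eq:funct_p} and each of the four in~\eqref{eq:funct_q} is used exactly once, eliminating one of the constant and $\sigma$\nobreakdash-linear coefficients in the expansions of $p_0,p_1,\p_np_0,\p_np_1$, so that the cancellation against the Taylor expansion of $\varphi$ (respectively $i\eta\varphi$) is complete rather than partial. Because the smoothing functions were designed precisely to satisfy these eight normalizations, no tool beyond Taylor's theorem and the Sobolev embedding is required, and no subtle regularity or compactness question arises.
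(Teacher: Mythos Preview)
Your proposal is correct and follows essentially the same route as the paper: Sobolev embedding to obtain $C^1$ (for $s>3/2$) and $C^2$ (for $s>5/2$) regularity of $\varphi$, followed by a two-term Taylor expansion along $\Gamma$ in which the eight point conditions~\eqref{eq:funct_p}--\eqref{eq:funct_q} kill the constant and linear terms. The only cosmetic differences are that the paper phrases well-definedness via the distributional identity $R_D=I-p_0(\cdot|\nex)\delta_{\nex}-p_1(\cdot|\nex)\delta'_{\nex}$ (equivalent to your observation that $\varphi(\nex),\p_s\varphi(\nex)$ are well-defined scalars) and carries out the Taylor expansion in the given parameter $t$ rather than in arc length.
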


\begin{proof}
Clearly  $R_D$ and $R_S$ admit the representations $R_D=I- p_0(\,\cdot\,|\nex)\delta_{\nex}-p_1(\,\cdot\,|\nex)\delta'_{\nex}$ and $R_S=i\eta I-\p _n p_0(\,\cdot\,|\nex)\delta_{\nex}-\p _n p_1(\,\cdot\,|\nex)\delta'_{\nex}$ in terms of the Dirac's distribution $\delta_{\nex}$ (supported at $\nex\in\Gamma$) and its derivative, both of which belong to $H^{-s}(\Gamma)$ for all $s>3/2$. Since $p_0(\:\cdot\:|\nex),\p_n p_0(\:\cdot\:|\nex)\in H^{s}(\Gamma)$ for all $s\in\R$, on the other hand,  it readily follows that $R^{(1)}_D[\varphi|\nex],R^{(1)}_S[\varphi|\nex]\in H^{s}(\Gamma)$, $s>3/2$, for any given point  $\nex\in\Gamma$.
 
We now prove  the asymptotic identities in~\eqref{eq:estimate}. Let  the parameter values $t,\tau\in[0,2 \pi)$ be such that $\nex=\bold x(t)$ and $\ney=\bold x(\tau)$, where $\bold x:[0,2\pi)\to\Gamma$ denotes the parametrization of the smooth curve~$\Gamma$, and let $\phi =\varphi\circ\bold x$ where $\varphi\in H^s(\Gamma)$, $s>5/2.$ Since for $s>5/2$ the density $\phi$ is a twice-continuously differentiable $2\pi$-periodic function on $[0,2\pi)$~\cite[Lemma 5.3.3]{saranen2013periodic}, we have that 
\begin{equation}
 \rho_D(\tau|t)=R_D[\varphi|\bold x(t)](\bold x(\tau))=\phi(\tau)-\phi(t)\tilde p_0(\tau|t)-|\bold x'(t)|^{-1}\phi'(t)\tilde p_1(\tau|t),\label{eq:param_1}\end{equation} with $\tilde p_0(\tau|t)=p_0(\bold x(\tau)|\bold x(t))$ and $\tilde p_1(\tau|t)=p_1(\bold x(\tau)|\bold x(t)),$
 is also a twice-continuously differentiable 2$\pi$-periodic function. Therefore, expansing $\rho_D(\tau|t)$ as a Taylor series around $\tau=t$, we get
\begin{equation}\begin{split}
\rho_D(\tau|t) =&\ \phi(t)+\phi'(t)(\tau-t)+\frac{\phi''(t)}{2}(\tau-t)^2+o(|\tau-t|^2)\\
&-\phi(t)\{ 1+O(|\tau-t|^2)\}
-\phi'(t)\{\tau-t+O(|\tau-t|^2)\}
=O(|\tau-t|^2)\ \mbox{ as }\  \tau\to t,
\end{split}\label{eq:taylor}\end{equation}
where we have utilized the identities
\begin{equation*}\begin{split}
\tilde p_0(t|t) =  p_0(\bold x(t)|\bold x(t)) =1,\qquad&\tilde p'(t|t) = |\bold x'(t)|\p_s p_0(\bold x(t)|\bold x(t)) =0,\\
\tilde p_1(t|t) =  p_1(\bold x(t)|\bold x(t)) =0,\qquad&\tilde q'(t|t) = |\bold x'(t)|\p_s p_1(\bold x(t)|\bold x(t)) =|\bold x'(t)|,
\end{split}\end{equation*}
that follow from the point conditions in~\eqref{eq:funct_p} and~\eqref{eq:funct_q} satisfied by  $p$ and $q$, respectively.

Similarly, using the identities 
\begin{equation*}\begin{split}
\p_n\tilde p_0(t|t) = \p_n p_0(\bold x(t)|\bold x(t)) =i\eta,\qquad&\p_n\tilde p'_0(t|t) = |\bold x'(t)|\p_s\p_n p_0(\bold x(t)|\bold x(t)) =0,\\
\p_n\tilde p_1(t|t) =  \p_np_1(\bold x(t)|\bold x(t)) =0,\qquad&\p_n\tilde p'_1(t|t) = |\bold x'(t)|\p_s \p_np_1(\bold x(t)|\bold x(t)) =i\eta |\bold x'(t)|,
\end{split}\end{equation*}  where 
$\p_n\tilde p_0(\tau|t)=\p_n p_0(\bold x(\tau)|\bold x(t))$ and  $\p_n\tilde p_1(\tau|t)=\p_n p_1(\bold x(\tau)|\bold x(t)),$
 it can be shown that  the function
 \begin{equation}\rho_S(\tau|t)=R_S[\varphi|\bold x(t)](\bold x(\tau))=i\eta\phi(\tau)-\phi(t)\p_n\tilde p_0(\tau|t)-|\bold x'(t)|^{-1}\phi'(t)\p_n\tilde p_1(\tau|t) \label{eq:param_2}\end{equation} satisfies
 \begin{equation}
\rho_S(\tau|t) =O(|\tau-t|^2)\quad\mbox{as}\quad \tau\to t.
\end{equation}
Therefore, finally, the identities in~\eqref{eq:estimate} follow from the fact that $O(|\bold x(t)-\bold x(\tau)|^2) = O(|\tau-t|^2)$.  The proof is now complete.
  \end{proof}
  
In order to illustrate the result of Lemma~\ref{eq:lem_important} we present  Figure~\ref{fig:reg_functions}   which displays the functions $\rho_D(\tau|t)$ and $\rho_S(\tau|t)$ defined in~\eqref{eq:param_1} and~\eqref{eq:param_2}, respectively, obtained by application of  $R_D$ and $R_S$ to a certain smooth density function~$\varphi$.  As can be observe in this figure, both functions $\rho_D(\tau|t)$ and $\rho_S(\tau|t)$ vanish quadratically along the line $\tau=t$.
\begin{figure}[h!]
\centering	
\includegraphics[scale=0.57]{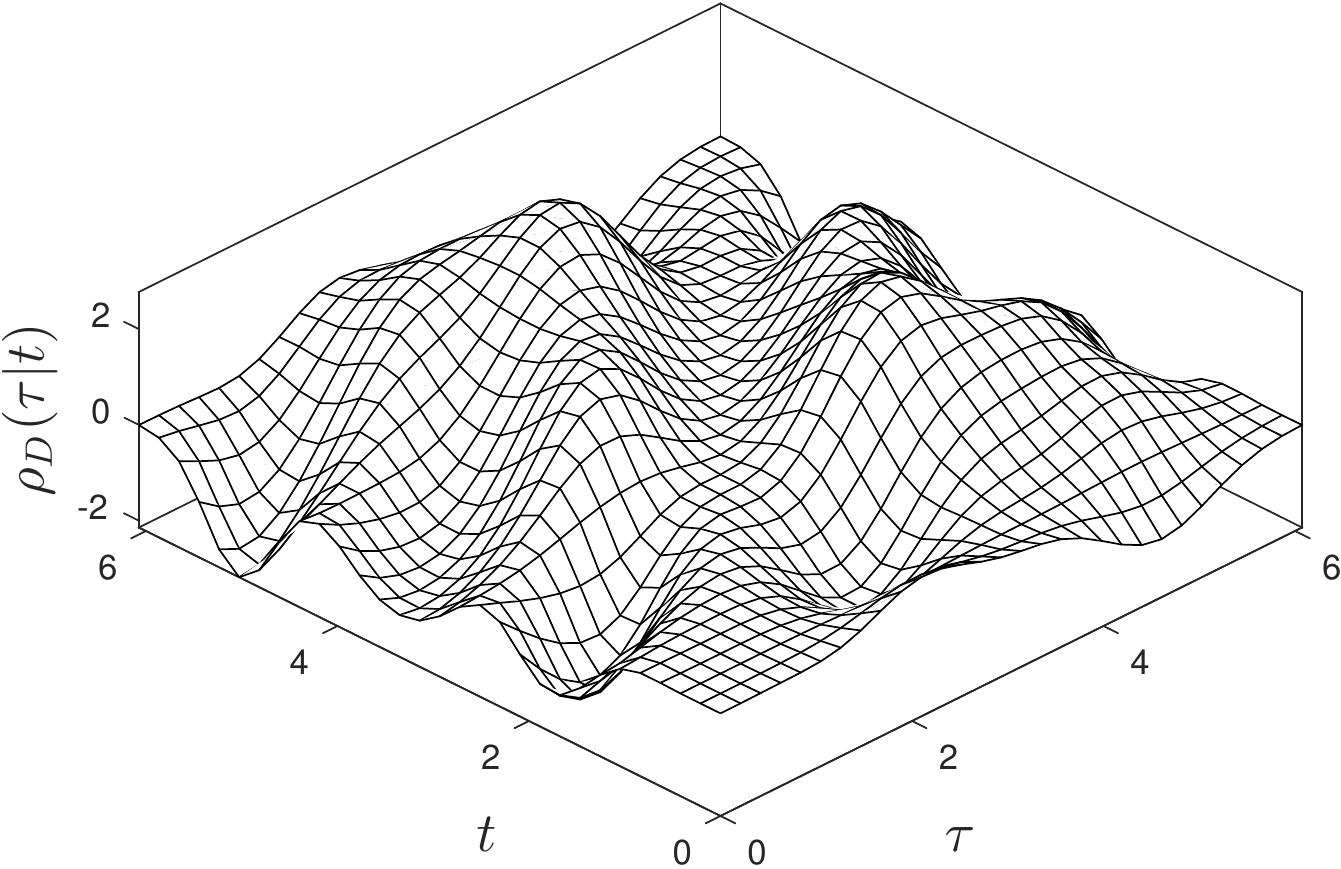}\quad
\includegraphics[scale=0.57]{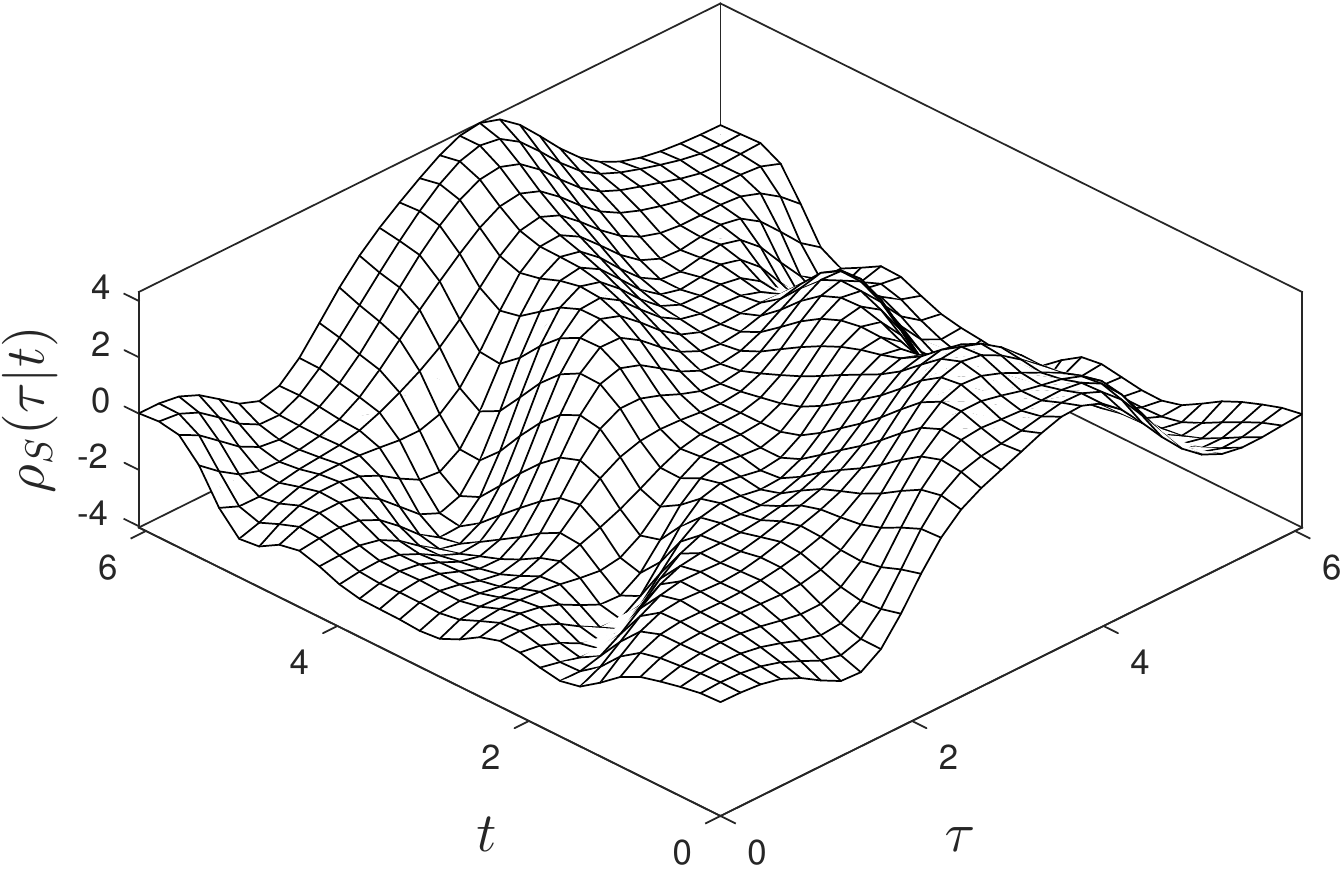}
\caption{Real part of the functions  $\rho_D(\tau|t)=R_D[\varphi|\bold x(t)](\bold x(\tau))$  (left)  and $\rho_S(\tau|t)=R_S[\varphi|\bold x(t)](\bold x(\tau))$  (right), where $\varphi(\bnex(t)) = \e^{ik\bnex(t)\cdot(\cos\pi/8,\sin\pi/8)}$,   $\Gamma=\{\bold x(t)=(\cos t,\sin t),t\in[0,2\pi)\}$ and $k=\eta = 2$.}\label{fig:reg_functions}
\end{figure}  
  
  
We are now in position to study the singular character of integral operator $K\circ R_D-S\circ R_S$ in the SD-CFIE~\eqref{eq:IE_SDirichlet}. As in the proof of Lemma~\ref{eq:lem_important} we utilize the parametrization $\bold x$ of the smooth curve  $\Gamma$ to define the $2\pi$-periodic function $\phi(t) = \varphi(\bold x(t))$ as well as the $2\pi$-biperiodic functions  $\rho_D(\tau|t)$ and $\rho_S(\tau|t)$ given  in~\eqref{eq:param_1} and~\eqref{eq:param_2}. Using these notations and letting $R=R(t,\tau)=|\bnex(t)-\bnex(\tau)|$ and  $\bold n(\tau)=(-x'_2(\tau),x'_1(\tau))/|\bold x'(\tau)|$ we have that  $v(t)=(K\circ R_D-S\circ R_S)[\varphi](\bnex(t))$ can be expressed as 
\begin{equation}
v(t) = \int_{0}^{2\pi}\lf\{L(t,\tau)\rho_D(\tau|t)-M(t,\tau)\rho_S(\tau|t)\rg\}\de\tau,\quad t\in[0,2\pi),\label{eq:op_param_Dir}
\end{equation}
in terms of the weakly-singular  kernels
\begin{equation}\begin{split}
L(t,\tau)=&\ \frac{ik}{4} \frac{H_1^{(1)}(kR)}{R}(\bold x(t)-\bold x(\tau))\cdot \bold n(\tau)|\bold x'(\tau)|=L_1(t,\tau)\log|t-\tau|+L_2(t,\tau),\\
M(t,\tau)=&\ \frac{i}{4} H_0^{(1)}(kR)|\bold x'(\tau)|= M_1(t,\tau)\log|t-\tau|+M_2(t,\tau),
\end{split}\label{eq:kernels_dirich}
\end{equation}
where the functions 
$$L_1(t,\tau) := -\frac{k}{2\pi}\frac{J_1(kR)}{R}(\bold x(t)-\bold x(\tau))\cdot \bold n(\tau)|\bold x'(\tau)|,\quad L_2(t,\tau):=L(t.\tau)-L_1(t,\tau)\log|t-\tau|$$
$$M_1(t,\tau) := -\frac{1}{2\pi}J_0(kR)|\bold x'(\tau)|\andtext M_2(t,\tau):=M(t.\tau)-M_1(t,\tau)\log|t-\tau|$$
can be properly defined at $\tau=t$ so that they are in fact $2\pi$-biperiodic analytic functions~\cite{COLTON:2012}. 

It thus follows  from~\eqref{eq:kernels_dirich}, Lemma~\ref{eq:lem_important}  and the fact that $L(t,\tau)=O(|t-\tau|\log|t-\tau|)$, that the integrands in~\eqref{eq:op_param_Dir}  satisfy
\begin{equation}\begin{split}
L(t,\tau) \rho_D(\tau|t) =  O(|t-\tau|^3\log|t-\tau|)\mbox{ and }M(t,\tau)\rho_S(\tau|t) =  O(|t-\tau|^2\log|t-\tau|)\ \mbox{ as }\ \tau\to t
\end{split}\label{eq:sing_Dirichlet}
\end{equation}
for $\varphi\in H^s(\Gamma)$ for $s>5/2$. As it turns out, it can be easily shown  that for $s>5/2$ the integrands in~\eqref{eq:op_param_Dir} are indeed continuously differentiable $2\pi$-biperiodic  functions (in $t$ and $\tau$).  

To illustrate the smoothness of $L(t,\tau) \rho_D(\tau|t)$ and $M(t,\tau) \rho_S(\tau|t)$ along the line $\tau=t$ we present Figure~\ref{fig:reg_integrands} which displays both functions for a given smooth density function $\varphi$.
\begin{figure}[h!]
\centering	
\includegraphics[scale=0.57]{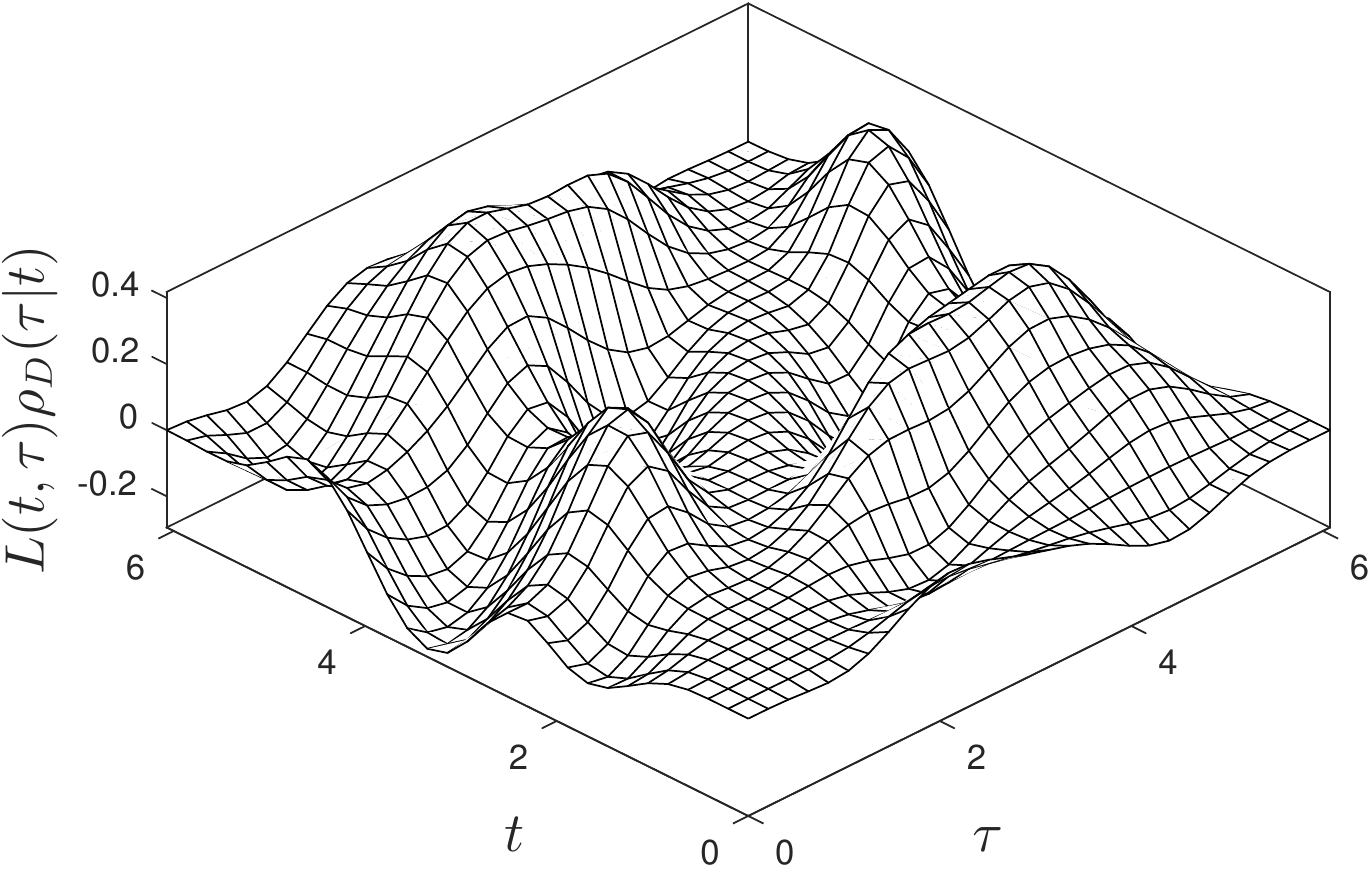}\quad
\includegraphics[scale=0.57]{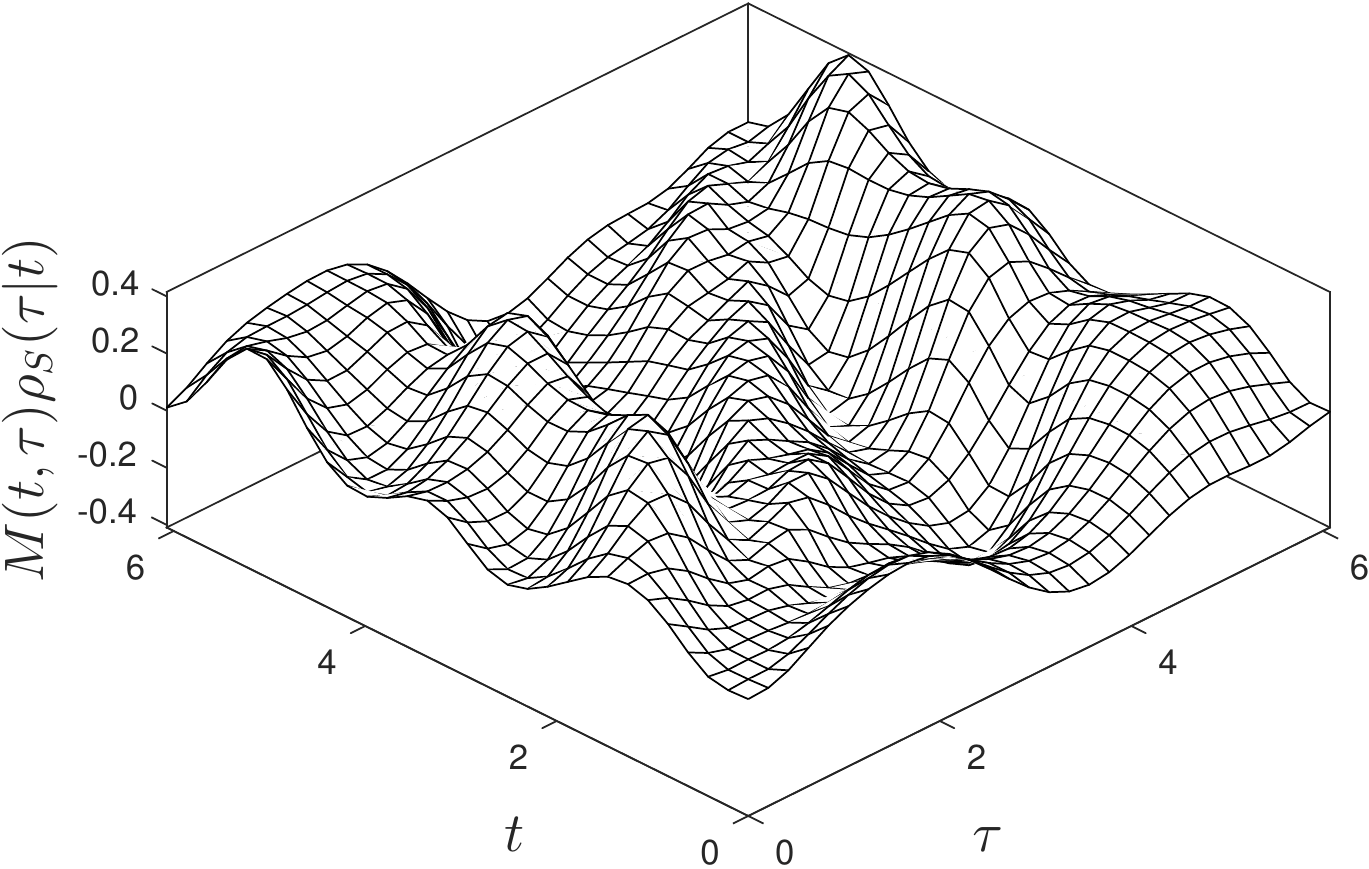}
\caption{Real part of the functions  $L(t,\tau)\rho_D(\tau|t)$  (left)  and $M(t,\tau)\rho_S(\tau|t)$  (right) for $\varphi(\bnex(t))=\phi(t) = \e^{ik\cos(t-\pi/8)}$, $\Gamma=\{\bold x(t)=(\cos t,\sin t), t\in[0,2\pi)\}$ and  $k=\eta = 2$.}\label{fig:reg_integrands}
\end{figure}  

We finish this section with Theorem~\ref{lem:equal} whose proof follows directly from the discussion above:
\begin{theorem}\label{lem:equal} Let $S$ and $K$ be the single- and double-layer operators  defined in~\eqref{eq:single} and~\eqref{eq:double}, respectively, and $R_D$ and $R_S$ be the smoothing operators defined in~\eqref{eq:reg_op}. Then, the identity 
$$\frac{I}{2}+K-i\eta S=K\circ R_D- S\circ R_S$$ holds true on $H^s(\Gamma)$ for all $s>3/2$. Therefore, in particular, the SD-CFIE~\eqref{eq:IE_SDirichlet} is uniquely solvable on $H^s(\Gamma)$, $s>3/2$ for all wavenumbers $k\in\C$, $\real k>0$, $\imag k\geq 0$, provided $u^\inc|_{\Gamma}\in H^s(\Gamma)$ and~$\eta>0$.
\end{theorem}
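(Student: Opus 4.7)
The plan is to verify the asserted operator identity by direct computation, using the Green's identities~\eqref{eq:GP}--\eqref{eq:GQ} together with the exterior jump relations~\eqref{eq:jump} to collapse the two correction terms introduced by $R_D$ and $R_S$ into the $\tfrac{I}{2}$ piece, and then to read off unique solvability from the known theory of the classical D-CFIE.

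First, I would fix $\nex\in\Gamma$ and $\varphi\in H^s(\Gamma)$ with $s>3/2$, for which Lemma~\ref{eq:lem_important} guarantees that $\varphi(\nex)$ and $\p_s\varphi(\nex)$ are well-defined continuous functionals and that $R_D[\varphi|\nex], R_S[\varphi|\nex]$ lie in $H^s(\Gamma)$. Using linearity of $S$ and $K$ in their densities, I would split
\begin{align*}
(K\circ R_D - S\circ R_S)[\varphi](\nex)
&= K[\varphi](\nex) - i\eta\, S[\varphi](\nex) \\
&\quad - \varphi(\nex)\bigl\{K[p_0(\,\cdot\,|\nex)](\nex) - S[\p_n p_0(\,\cdot\,|\nex)](\nex)\bigr\} \\
&\quad - \p_s\varphi(\nex)\bigl\{K[p_1(\,\cdot\,|\nex)](\nex) - S[\p_n p_1(\,\cdot\,|\nex)](\nex)\bigr\}.
\end{align*}

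Second, to evaluate the two bracketed boundary expressions I would take the exterior boundary limit $\ner\to\nex$ in the identities~\eqref{eq:GP}--\eqref{eq:GQ} and apply the exterior jump relations~\eqref{eq:jump}, producing
\begin{equation*}
\tfrac{1}{2}p_j(\nex|\nex) + K[p_j(\,\cdot\,|\nex)](\nex) - S[\p_n p_j(\,\cdot\,|\nex)](\nex) = 0, \qquad j=0,1.
\end{equation*}
Combining this with the point values $p_0(\nex|\nex)=1$ and $p_1(\nex|\nex)=0$ prescribed by~\eqref{eq:funct_p}--\eqref{eq:funct_q}, the two bracketed quantities collapse to $-\tfrac{1}{2}$ and $0$, respectively. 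Substituting back gives the pointwise identity $(K\circ R_D - S\circ R_S)[\varphi](\nex) = \tfrac{1}{2}\varphi(\nex) + K[\varphi](\nex) - i\eta\, S[\varphi](\nex)$. Since Lemma~\ref{eq:lem_important} yields boundedness $R_D,R_S:H^s(\Gamma)\to H^s(\Gamma)$ and $K,S$ are continuous on the relevant Sobolev scales, this pointwise equality upgrades to an operator identity on $H^s(\Gamma)$, $s>3/2$.

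Finally, the unique-solvability statement for~\eqref{eq:IE_SDirichlet} follows at once from the operator identity: it reduces the SD-CFIE to the classical D-CFIE~\eqref{eq:IE_Dirichlet}, whose invertibility on $H^s(\Gamma)$ for $s\geq 0$ under the hypotheses $\real k>0$, $\imag k\geq 0$ and $\eta>0$ is quoted from~\cite[Theorem 3.3]{COLTON:1983}. I do not anticipate a serious obstacle in executing this plan; the only mildly delicate point is the threshold $s>3/2$ required to give meaning to the pointwise evaluations inside $R_D$ and $R_S$, which is precisely the content of Lemma~\ref{eq:lem_important}.
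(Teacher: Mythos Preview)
Your proposal is correct and follows essentially the same route as the paper: the paper's ``discussion above'' derives the identity by subtracting the Green's identities~\eqref{eq:GP}--\eqref{eq:GQ} from the combined potential~\eqref{eq:CFP}, taking the exterior boundary trace via~\eqref{eq:jump}, and then setting $\nex_0=\nex$ together with $p_0(\nex|\nex)=1$, $p_1(\nex|\nex)=0$; you do the same thing in a slightly different order (expanding $K\circ R_D-S\circ R_S$ by linearity first and then invoking the boundary version of~\eqref{eq:GP}--\eqref{eq:GQ}). The concluding appeal to the classical D-CFIE theory is identical.
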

\subsection{Neumann problem} 
We now proceed to derive the  smoothed integral equation  for exterior Neumann problem~\eqref{eq:NEP}. 
 Evaluating the normal derivative  of the potential~\eqref{eq:mod_rep_formula} on~$\Gamma$ and using, once again, the jump conditions~\eqref{eq:jump}, we obtain 
\begin{equation*}
\begin{split}
\p_n u(\nex) =&\ -\frac{i\eta\varphi(\nex)-\varphi(\nex_0)\p_np_0(\nex|\nex_0)-\p_s\varphi(\nex_0)\p_np_1(\nex|\nex_0)}{2}\\
&\ +N\left[\varphi-\varphi(\nex_0)p_0(\:\cdot\:|\nex_0)-\p_s\varphi(\nex_0)p_1(\:\cdot\:|\nex_0)\right](\nex)\\
&- K'\lf[i\eta\varphi-\varphi(\nex_0)\p_np_0(\:\cdot\:|\nex_0)-\p_s\varphi(\nex_0)\p_np_1(\:\cdot\:|\nex_0)\rg](\nex),\quad \nex,\nex_0\in\Gamma,
\end{split}
\end{equation*}
where the integral  operators $N$ and $K'$ are defined in~\eqref{eq:adouble} and~\eqref{eq:hyper}, respectively. Therefore, 
selecting $\nex_0=\nex$  and using the fact that  $\p_n p_0(\nex|\nex)=i\eta$ and $\p_np_1(\nex|\nex)=0$, we obtain the Smoothed Neumann Combined Field Integral Equation (SN-CFIE) 
\begin{equation}
\lf(N\circ R_D-K'\circ R_S\rg)[\varphi] = -\p_n u^\inc\quad\mbox{on}\quad\Gamma,\label{eq:IE_SNeumann}
\end{equation}
for the unknown density function $\varphi$.

Let us now examine the singular character of the integral operator $N\circ R_D-K'\circ R_S$ in the SN-CFIE~\eqref{eq:IE_SNeumann}. Using the notations $\rho_D(\tau|t)$ and $\rho_S(\tau|t)$ introduced in~\eqref{eq:param_1} and~\eqref{eq:param_2},  we have that $v(t) =(N\circ R_D-K'\circ R_S)[\varphi](\bnex(t))$   can be expressed as
\begin{equation}
v(t) =  \int_{0}^{2\pi}\lf\{H(t,\tau)\rho_D(\tau|t)-W(t,\tau)\rho_S(\tau|t)\rg\}\de\tau,\quad t\in[0,2\pi),\label{eq:op_param_Neu}
\end{equation}
in terms of the integral kernels
\begin{equation}\begin{split}
H(t,\tau)=&\ \frac{ik}{4}\left\{kRH_0^{(1)}(kR)-2H_1^{(1)}(kR)\right\}\frac{(\bold x(t)-\bold x(\tau))\cdot \bold n(\tau)\ (\bold x(t)-\bold x(\tau))\cdot \bold n(t)}{R^3}|\bold x'(\tau)|\\
& +\frac{ik}{4}\frac{H_1^{(1)}(kR)}{R}|\bold x'(\tau)|\bold n(t)\cdot \bold n(\tau)=\frac{H_0(t,\tau)}{(t-\tau)^2}+H_1(t,\tau)\log(|t-\tau|) + H_2(t,\tau),\medskip\\
W(t,\tau)=&\ \frac{ik}{4} \frac{H_1^{(1)}(kR)}{R}(\bold x(\tau)-\bold x(t))\cdot \bold n(t)|\bold x'(\tau)|=W_1(t,\tau)\log|t-\tau|+W_2(t,\tau),
\end{split}\label{eq:kernels_neumann}
\end{equation}
where the functions
\begin{equation*}\begin{split}
H_0(t,\tau):=&\ \frac{1}{2\pi}\frac{(t-\tau)^2}{ R^2}\bold n(t)\cdot \bold n(\tau)|\bold x'(\tau)|,\\
 H_1(t,\tau)
:=&\ -\frac{k^2}{4\pi}\left\{J_0(kR)-2\frac{J_1(kR)}{kR}\right\}\frac{(\bold x(t)-\bold x(\tau))\cdot \bold n(\tau)\ (\bold x(t)-\bold x(\tau))\cdot \bold n(t)}{R^2}|\bold x'(\tau)|\\
& -\frac{k}{4\pi}\frac{J_1(kR)}{R}\bold n(t)\cdot \bold n(\tau)|\bold x'(\tau)|,\\
H_2(t,\tau):=&\ H(t,\tau)-\frac{H_0(t,\tau)}{(t-\tau)^2}+H_1(t,\tau)\log|t-\tau|,\\
W_1(t,\tau):=&\ \frac{k}{2\pi}\frac{J_1(kR)}{R}(\bold x(t)-\bold x(\tau))\cdot \bold n(t)|\bold x'(\tau)|\andtext W_2(t,\tau):=W(t,\tau)-W_1(t,\tau)\log|t-\tau|,
\end{split}\end{equation*}
can be properly defined at $\tau=t$ so that they are $2\pi$-biperiodic analytic functions~\cite{COLTON:2012}. 

 Therefore,  from~\eqref{eq:kernels_neumann}, Lemma~\ref{eq:lem_important} and the fact that $W(t,\tau)=O(|t-\tau|\log|t-\tau|)$, we obtain that  
\begin{equation}\begin{split}
H(t,\tau) \rho_D(\tau|t) =  O(1)+O(|t-\tau|^2\log|t-\tau|)\andtext W(t,\tau)\rho_S(\tau|t) =  O(|t-\tau|^3\log|t-\tau|)
\end{split}\label{eq:sing_Neumann}
\end{equation}
as $\tau\to t$ for $\varphi\in H^s(\Gamma)$ for $s>5/2$. 
 
 As in the case of the SD-CFIE, it can be easily shown that for $s>5/2$  the integrands in~\eqref{eq:op_param_Neu} are  continuously differentiable $2\pi$-biperiodic functions.
  
 Note  that, unfortunately, the diagonal values of the integrand in~\eqref{eq:op_param_Neu}, i.e., the limit values of the integrand as $\tau\to t$, depends on the second derivative of $\phi(t)=\varphi(\bnex(t))$ at $\tau=t$. More precisely, we have 
\begin{equation}
\lim_{\tau\to t}H(t,\tau)\rho_D(\tau|t) =H_0(t,t)\lim_{\tau\to t}\frac{\rho_D(\tau|t)}{(t-\tau)^2}= \frac{H_0(t,t)}{2}\left\{\phi''(t)-\phi(t)\tilde p_0''(t|t)-\frac{\phi'(t)}{|\bold x'(t)|}\tilde p_1''(t|t)\rg\},\label{eq:diag_term}
\end{equation}
where, as in the proof of Lemma~\ref{eq:lem_important}, we have used the notations $\tilde p_0(\tau|t)  =p_0(\bnex(\tau)|\bnex(t))$ and $\tilde p_0(\tau|t)  =p_1(\bnex(\tau)|\bnex(t))$.

To illustrate the smoothness of the integrand in the expression~\eqref{eq:op_param_Neu} we present Figures~\ref{fig:reg_integrands_Ne} which displays the functions  $H(t,\tau)\rho_D(\tau|t)$ and $W(t,\tau)\rho_S(\tau|t)$.
\begin{figure}[h!]
\centering	
\includegraphics[scale=0.57]{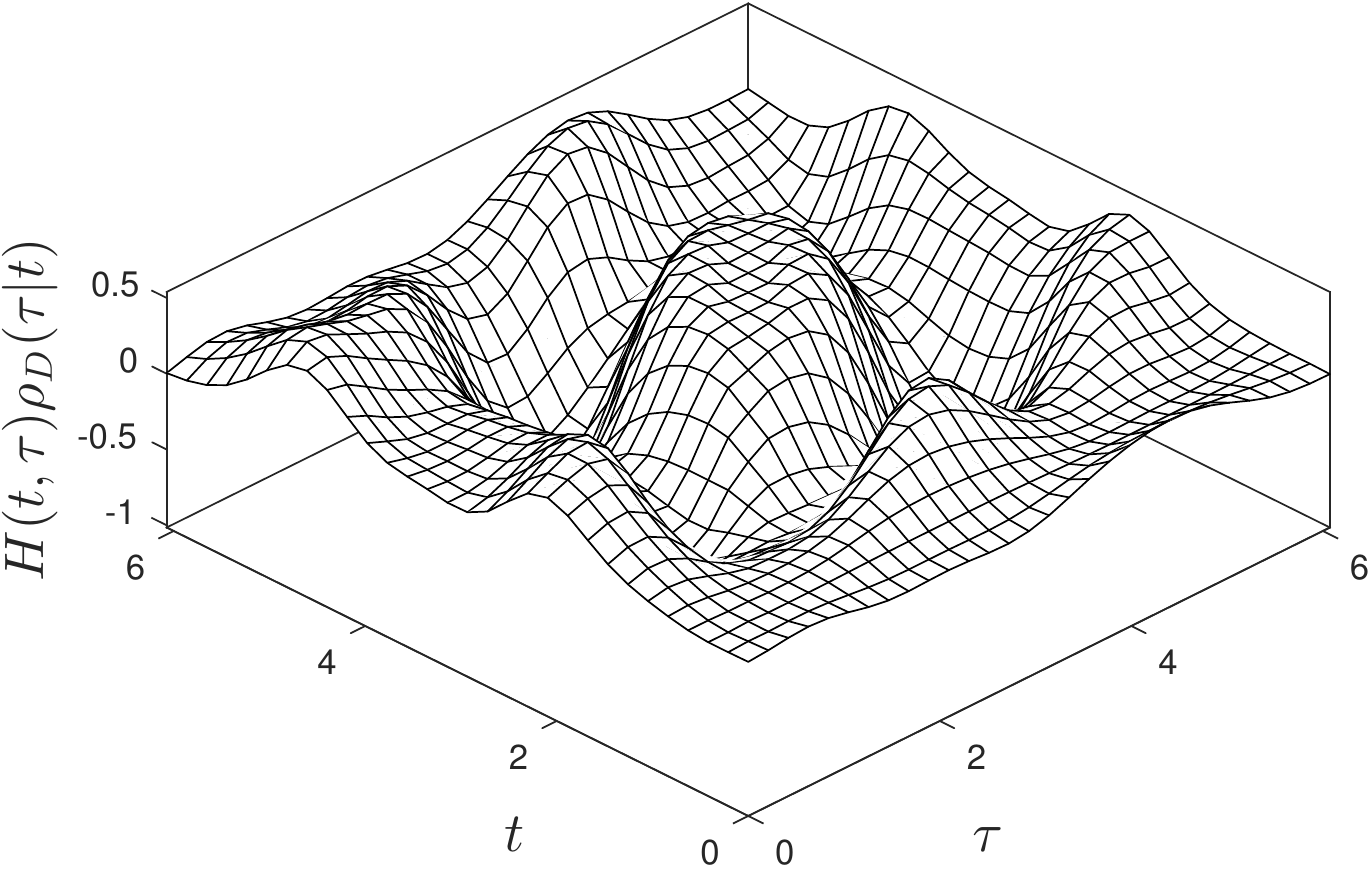}\quad
\includegraphics[scale=0.57]{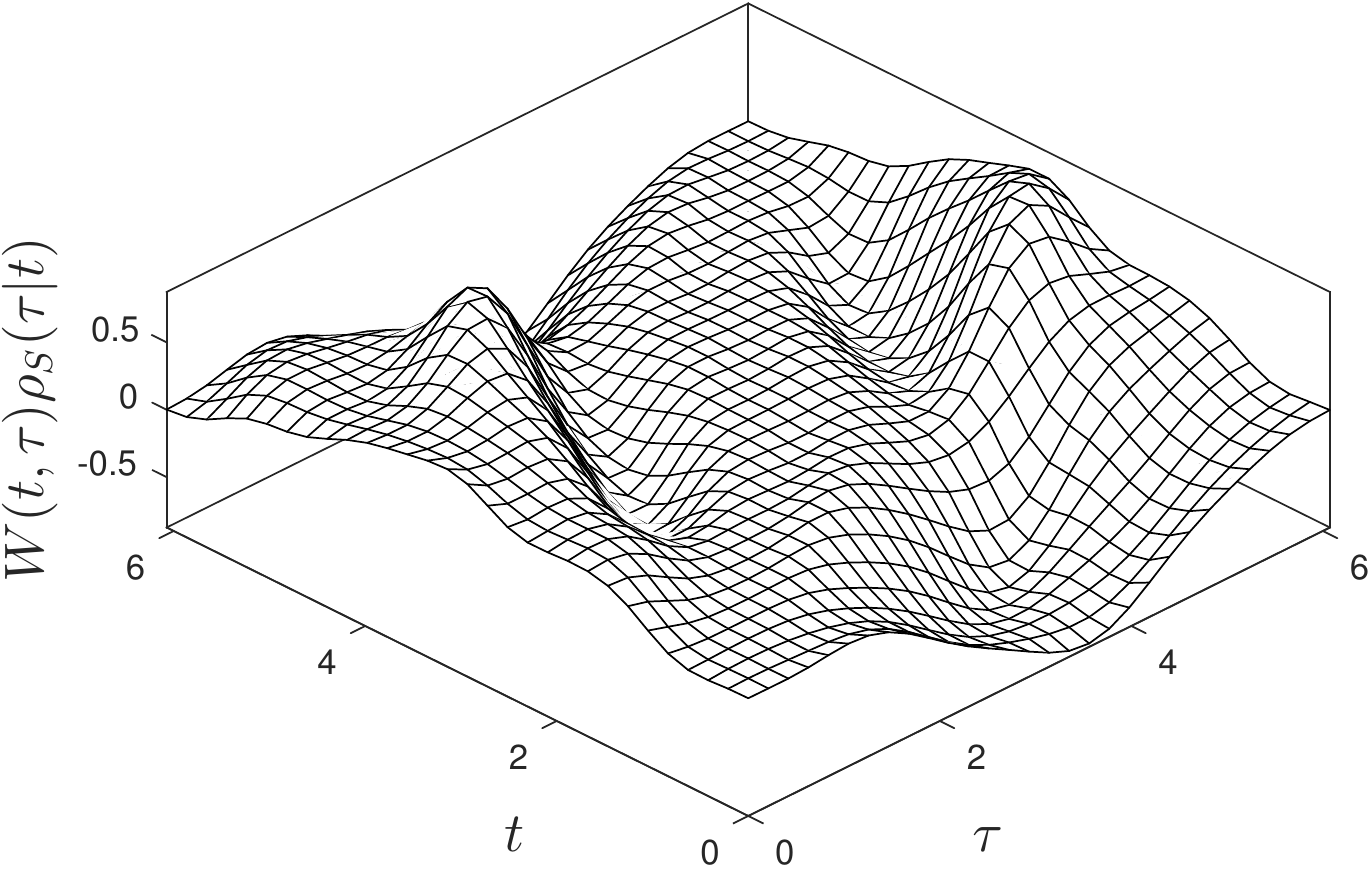}
\caption{Real part of the functions  $H(t,\tau)\rho_D(\tau|t)$  (left)  and $W(t,\tau)\rho_S(\tau|t)$ (right) for $\varphi(\bnex(t))=\phi(t) = \e^{ik\cos(t-\pi/8)}$, $\Gamma=\{\bold x(t)=(\cos t,\sin t), t\in[0,2\pi)\}$ and  $k=\eta = 2$.}\label{fig:reg_integrands_Ne}
\end{figure}  

We end this section with the following assertion that follows directly from the discussion above:
\begin{theorem}\label{lem:equal_N} Let $N$ and $K'$ be the hypersingular and adjoint double-layer operators defined in~\eqref{eq:single} and~\eqref{eq:double}, respectively, and $R_D$ and $R_S$ be the smoothing operators defined in~\eqref{eq:reg_op}. Then, the identity 
$$\frac{i\eta I}{2}+N-i\eta K'=N\circ R_D- K'\circ R_S$$ holds true on $H^s(\Gamma)$ for all $s>3/2$. Therefore, in particular, the SN-CFIE~\eqref{eq:IE_SDirichlet} admits at most one solution  $\varphi\in H^s(\Gamma)$, $s>3/2$, for all wavenumbers $k\in\C$, $\real k>0$, $\imag k\geq 0$, provided $\p_nu^\inc\in H^{s-1}(\Gamma)$ and $\eta>0$.
\end{theorem}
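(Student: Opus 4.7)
The plan is to establish the operator identity directly by expanding the composition and reducing the problem to two pointwise scalar identities at $\ney=\nex$. Using linearity of $N$ and $K'$ together with the definitions in~\eqref{eq:reg_op}, one finds
\begin{equation*}
(N\circ R_D - K'\circ R_S)[\varphi](\nex) = \bigl(N-i\eta K'\bigr)[\varphi](\nex) - \varphi(\nex)\,I_0(\nex) - \p_s\varphi(\nex)\,I_1(\nex),
\end{equation*}
where $I_j(\nex):=N[p_j(\:\cdot\:|\nex)](\nex) - K'[\p_n p_j(\:\cdot\:|\nex)](\nex)$ for $j=0,1$. Matching this with the left-hand side $\tfrac{i\eta}{2}I+N-i\eta K'$ of the claimed identity reduces the theorem to verifying that $I_0(\nex)=-\tfrac{i\eta}{2}$ and $I_1(\nex)=0$ for every $\nex\in\Gamma$.

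To establish these two scalar identities, I would take the exterior normal derivative of the Green's identities~\eqref{eq:GP} and~\eqref{eq:GQ} as $\ner\to\nex\in\Gamma$ from outside and apply the jump relations~\eqref{eq:jump}. A short calculation yields, for any $\nex_0\in\Gamma$,
\begin{equation*}
N[p_j(\:\cdot\:|\nex_0)](\nex) - K'[\p_n p_j(\:\cdot\:|\nex_0)](\nex) = -\tfrac{1}{2}\,\p_n p_j(\nex|\nex_0),\qquad j=0,1,
\end{equation*}
the right-hand side originating from the $-\tfrac{1}{2}\varphi$ jump of $\p_n\mathcal S$. Specializing $\nex_0=\nex$ and invoking the point conditions $\p_n p_0(\nex|\nex)=i\eta$ and $\p_n p_1(\nex|\nex)=0$ prescribed in~\eqref{eq:funct_p}--\eqref{eq:funct_q} yields precisely $I_0(\nex)=-\tfrac{i\eta}{2}$ and $I_1(\nex)=0$, completing the proof of the operator identity.

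The delicate point is the Sobolev setting in which the identity is asserted: the expansions of $R_D$ and $R_S$ involve the pointwise values $\varphi(\nex)$ and $\p_s\varphi(\nex)$, which require enough regularity for $\varphi$ to be continuously differentiable on $\Gamma$. This is exactly the content of Lemma~\ref{eq:lem_important}, which guarantees that $R_D[\varphi|\nex],R_S[\varphi|\nex]\in H^s(\Gamma)$ for $s>3/2$, so that $N\circ R_D$ and $K'\circ R_S$ map $H^s(\Gamma)$ continuously into $H^{s-1}(\Gamma)$ by the mapping properties recalled in Section~\ref{sec:prelim}. With the operator identity available on $H^s(\Gamma)$, the uniqueness statement is immediate: any solution $\varphi\in H^s(\Gamma)$ of the SN-CFIE~\eqref{eq:IE_SNeumann} is automatically a solution of the classical N-CFIE~\eqref{eq:IE_Neumann}, for which at most one solution exists under the stated conditions on $k$ and $\eta$ by~\cite[Theorem 3.34]{COLTON:1983}.
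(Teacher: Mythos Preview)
Your proof is correct and mirrors the paper's own derivation: the paper obtains the SN-CFIE by taking the exterior normal derivative of the modified potential~\eqref{eq:mod_rep_formula} (which is nothing but~\eqref{eq:CFP} minus $\varphi(\nex_0)$ and $\p_s\varphi(\nex_0)$ times the Green identities~\eqref{eq:GP}--\eqref{eq:GQ}), applies the jump relations, and then sets $\nex_0=\nex$ using $\p_n p_0(\nex|\nex)=i\eta$, $\p_n p_1(\nex|\nex)=0$---exactly the computation you have packaged as $I_0(\nex)=-\tfrac{i\eta}{2}$ and $I_1(\nex)=0$. Your appeal to Lemma~\ref{eq:lem_important} for the $H^s$, $s>3/2$, setting and to \cite[Theorem~3.34]{COLTON:1983} for uniqueness is likewise what the paper intends.
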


\begin{remark}
The hypersingular operator can be expressed as 
$$
N[\varphi](\nex) = N\lf[\varphi(\nex)-\varphi(\nex)p_0(\,\cdot\,|\nex)+\p_s\varphi(\nex)p_1(\,\cdot\,|\nex)\rg](\nex)+K'[\varphi(\nex)\p_np_0(\,\cdot\,|\nex)+\p_s\varphi(\nex)\p_np_1(\,\cdot\,|\nex)](\nex),
$$
for all $\varphi\in H^s(\Gamma)$, $s>5/2$,  and all  $\nex\in\Gamma$, in terms of the smoothing functions $p_0$ and  $p_1$ introduced~\eqref{eq:funct_p} and~\eqref{eq:funct_q}. It thus follows from the Lemma~\ref{eq:lem_important} and the smoothness of the kernel $W$ of the adjoint double-layer operator, that the hypersingular operator can be expressed in terms of integrals of continuous functions. 
\end{remark}

\subsection{Evaluation of combined potentials close to the boundary \label{sec:ner_field}}

Once the density function~$\varphi$ has been retrieved by solving the integral equation~\eqref{eq:IE_SDirichlet} or~\eqref{eq:IE_SNeumann}, depending on the boundary condition, the desired  solution of the corresponding boundary value problem~\eqref{eq:DEP} or~\eqref{eq:NEP} is given in terms of the combined potential~\eqref{eq:CFP} that can be evaluated everywhere in $\R^2\setminus\overline\Omega$.  From the definition of the single- and double-layer potentials~\eqref{eq:pot} it is clear that for any given target point~$\ner\in\R^2\setminus\overline\Omega$ the integrands involved in the definition of the combined potential~\eqref{eq:CFP} are, in principle, smooth functions of the source/integration point $\ney\in\Gamma$.  
In practice, however, numerical issues arise when the observation point $\ner$ lies ``near" the boundary (see discussion on the 5$h$-rule in~\cite[Remark 6]{Barnett:2014tq}). In this case the boundary integrands in~\eqref{eq:CFP} are still smooth functions of $\ney\in\Gamma$, but since both the Green function $i/4H_0^{(1)}(k|\ner-\ney|)$ and its normal derivative blow up as $O(\log|\ner-\ney|)$ and  $O(|\ner-\ney|^{-1})$ respectively,  when $\ner\to\ney\in\Gamma$,  large numbers of quadrature points are needed to properly resolve the nearly singular character of the boundary integrands. 

In this section we describe how to utilize the smoothing operators $R_D$ and $R_S$ introduced in Section~\ref{eq:reg_IE} above, to substantially mitigate the errors produced by the naive numerical approximation of nearly singular integrals that arise from evaluation of the combined potentials close to the boundary.

Selecting  $\nex_0=\bar\ner$ in~\eqref{eq:mod_rep_formula}, where $\bar\ner\in\Gamma$  is such that $|\ner-\bar\ner|=\min_{\nex\in\Gamma}|\ner-\nex|$, we obtain that the combined potential~\eqref{eq:CFP} can be expressed as 
\begin{equation}\begin{split}
u(\ner) = \mathcal D\lf[R_D[\varphi|\bar\ner]\rg](\ner)-\mathcal S\lf[R_S[\varphi|\bar\ner]\rg](\ner),\quad \ner\in\R^2\setminus\overline\Omega.\label{eq:reg_pot}
\end{split}\end{equation} 
It thus follows from the properties of the operators $R_D$ and $R_S$ established in Lemma~\ref{eq:lem_important} that for a density function $\varphi\in H^{s}(\Gamma),$ $s>5/2$, the integrands in the expression on the right-hand-side of~\eqref{eq:reg_pot} satisfy
\begin{equation*}
 G(\ner,\ney)R_S[\varphi|\bar\ner](\ney) = O(|\ney-\bar\ner|^2\log|\ner-\bar\ner|)\ \mbox{ and }\ \frac{\p G(\ner,\ney)}{\p n(\ney)}R_D[\varphi|\bar\ner](\ney) = O\lf(\frac{|\ney-\bar\ner|^2}{|\ner-\bar\ner|}\rg)
\end{equation*}
as $\ney,\ner\to\bar\ner$, $\ner\in\R^2$,  $\bar\ner,\ney\in\Gamma$.

It is demonstrated in Section~\ref{sec:numerics} through numerical examples that use of the smoothed potential~\eqref{eq:reg_pot} instead of the combined potential~\eqref{eq:pot}  improves considerably the numerical accuracy of the fields at target points near the boundary.

\subsection{Close obstacles\label{sec:multiple}}
An issue similar to the one described above in Section~\ref{sec:ner_field} arises in  scattering configurations involving two or more obstacles that are close to each other. 
 
Without loss of generality we let $\Omega$ be composed by two disjoint obstacles $\Omega_1$ and $\Omega_2$ with smooth boundaries $\Gamma_1$ and $\Gamma_2$ ($\Gamma=\Gamma_1\cup\Gamma_2$). Clearly, evaluation of any of the integral operators~\eqref{eq:int_op} on the curve $\Gamma_1$ entails integration on $\Gamma_2$ of a certain density function multiplied by the Green function or one of its normal derivatives with respect to the target point $\nex\in\Gamma_1$,  the source point $\ney\in\Gamma_2$, or both. If $\Gamma_1$ and $\Gamma_2$ are close to each other, that is, there are source points $\ney\in\Gamma_2$ that are close to target points $\nex\in\Gamma_1$, the relevant integrands in the~\eqref{eq:int_op}  become nearly singular as they portrait singularities of the form  $O(\log|\nex-\ney|)$ in the case of the single-layer, $O(|\nex-\ney|^{-1})$ in the case of the double-layer and adjoint double layer operators, and  $O(|\nex-\ney|^{-2})$  in the case of the hypersingular operator.

Following the ideas presented in Section~\ref{sec:ner_field} and  letting $\varphi|_{\Gamma_1}=\varphi_1$ and $\varphi|_{\Gamma_2}=\varphi_2$ denote the restriction of the density function to each one of the curves,  we have that the combined field integral operators at a point $\nex\in\Gamma_i$, $i=1,2$, can be expressed as
\begin{equation}\begin{split}
\lf(\frac{I}{2}+K-i\eta S\rg)[\varphi](\nex)=\int_{\Gamma_i}\!\lf\{\frac{\p G(\nex,\ney)}{\p n(\ney)}R_D[\varphi_i|\nex](\ney)-G(\nex,\ney)R_S[\varphi_i|\nex](\ney)\rg\}\de s(\ney)+\\
  \int_{\Gamma_j}\lf\{\frac{\p G(\nex,\ney)}{\p n(\ney)}R_D[\varphi_j|\bar\nex_j](\ney)- G(\nex,\ney)R_S[\varphi_j|\bar\nex_j](\ney)\rg\}\de s(\ney)
\end{split}\label{eq:dir_near}\end{equation}
in the case of the D-CFIE, and 
\begin{equation}\begin{split}
\lf(\frac{i\eta I}{2}+N-i\eta K'\rg)[\varphi](\nex) = \int_{\Gamma_i}\!\!\lf\{\frac{\p^2 G(\nex,\ney)}{\p n(\nex)\p n(\ney)}R_D[\varphi_i|\nex](\ney)-\frac{\p G(\nex,\ney)}{\p n(\nex)}R_S[\varphi_i|\nex](\ney)\rg\}\de s(\ney)+\\
  \int_{\Gamma_j}\lf\{\frac{\p^2 G(\nex,\ney)}{\p n(\nex)\p n(\ney)}R_D[\varphi_j|\bar\nex_j](\ney)- \frac{\p G(\nex,\ney)}{\p n(\nex)}R_S[\varphi_j|\bar\nex_j](\ney)\rg\}\de s(\ney)
\end{split}\label{eq:neu_near}\end{equation}
in the case of the N-CFIE,  where the point $\bar\nex_j\in\Gamma_j$, $j=1,2$, $j\neq i$, is such that $|\nex-\bar\nex_j|=\min_{\ney\in\Gamma_j}|\nex-\ney|$.

Therefore, it follows from Lemma~\ref{eq:lem_important} that  all the integrands on the right hand side of~\eqref{eq:dir_near} and~\eqref{eq:neu_near} remain bounded regardless the distance between the curves $\Gamma_1$ and $\Gamma_2$. In fact,  the integrands on~$\Gamma_j$ in~\eqref{eq:dir_near} satisfy 
\begin{equation*}\begin{split}
  G(\nex\ney)R_S[\varphi|\bar\nex_j](\ney) =&\ O(|\ney-\bar\nex_j|^2\log|\ner-\bar\nex_j|)\ \mbox{ and }\\
   \frac{\p G(\nex,\ney)}{\p n(\ney)}R_D[\varphi|\bar\nex_j](\ney) =&\ O\lf(\frac{|\ney-\bar\nex_j|^2}{|\nex-\bar\nex_j|}\rg),\end{split}
\end{equation*}
while the integrands on~$\Gamma_j$ in~\eqref{eq:neu_near} satisfy 
\begin{equation*}\begin{split}
 \frac{\p G(\ner,\ney)}{\p n(\nex)}R_S[\varphi|\bar\nex_j](\ney) =&\ O(|\ney-\bar\nex_j|^2\log|\nex-\bar\nex_j|)\quad \mbox{ and }\\
  \frac{\p^2 G(\nex,\ney)}{\p n(\nex)\p n(\ney)}R_D[\varphi|\bar\nex_j](\ney) =&\ O\lf(\frac{|\ney-\bar\nex_j|^2}{|\nex-\bar\nex_j|^2}\rg),
\end{split}\end{equation*}
as $\ney,\nex\to\bar\nex_j$. 

\section{Smoothing functions}\label{sec:reg_functions}
This section  presents  explicit expressions for the smoothing functions $p_0(\ner|\nex_0)$ and $p_1(\ner|\nex_0)$ introduced in~\eqref{eq:funct_p} and~\eqref{eq:funct_q}, respectively, in terms of linear combinations of planes waves (LCPW) of the form $\e^{ik\bol d\cdot (\ner-\nex_0)}$, where  $\bol d\in\R^2$ is a constant unit vector 
and where $\nex_0$ is a given point on the curve $\Gamma$. The direction of propagation $\bol d$ of each one of the plane waves will be expressed as a linear combination of the vectors $\nor_0$ and $\bol\tau_0$ which denote the unit normal and unit tangent vectors at $\nex_0\in\Gamma$, respectively. More precisely, using the curve parametrization $\bnex:[0,2\pi)\to\Gamma$  we have $\nex_0=\bnex(t_0)$, $\nor_0=(-x_2'(t_0),x_1'(t_0))/|\bnex'(t_0)|$ and $\bol\tau_0=\bnex'(t_0)/|\bnex'(t_0)|$ for some $t_0\in[0,2\pi)$.  Here we note that any LCPW is indeed  a smooth homogeneous solution of the Helmholtz equation in all of~$\R^2$.

We  begin then by constructing a LCPW $f_0$ that satisfies  the point conditions
 \begin{equation}
\begin{gathered}
f_0(\ner|\nex_0)=1,\quad  \p_n f_0(\ner|\nex_0) = 0,\quad\p_s f_0(\ner|\nex_0) = 0,\quad \p_s\p_n f_0(\ner|\nex_0) = 0\quad\mbox{at}\quad\ner=\nex_0.
\end{gathered}\label{eq:def_ffp}
\end{equation} 
From the law of reflections  we have that a LCPF of the form  $f_0(\ner|\nex_0)=c\{\e^{ik\nor_0\cdot(\ner-\nex_0)}+\e^{-ik\nor_0\cdot(\ner-\nex_0)}\}$ satisfies the homogeneous Neumann boundary condition $\p_n f_0=0$ on the line tangent to $\Gamma$ at the point~$\nex_0$, and, furthermore, it remains constant along the tangent direction $\bol \tau_0$. Therefore, enforcing the condition  $f_0(\nex_0|\nex_0)=1$ we obtain that the  elementary function
$f_0(\ner|\nex_0) =\cos(k\nor_0\cdot(\ner-\nex_0))$ satisfies all the required point conditions~\eqref{eq:def_ffp}.

 Similarly,   considering now a LCPW $g_0$ that satisfies the homogeneous Dirichlet boundary condition $g_0=0$ on the line tangent to $\Gamma$ at the point~$\nex_0$, we find that  
$g_0(\ner|\nex_0) = \sin(k\nor_0\cdot (\ner-\nex_0))/k$
fulfills all  the point conditions
 \begin{equation*}
\begin{gathered}
g_0(\ner|\nex_0)=0,\quad  \p_n g_0(\ner|\nex_0) = 1,\quad\p_s g_0(\ner|\nex_0) = 0,\quad \p_s\p_n g_0(\ner|\nex_0) = 0\quad\mbox{at}\quad\ner=\nex_0.
\end{gathered}\label{eq:def_ggp}
\end{equation*} 
Combining $f_0$ and $g_0$ we thus obtain the  expression 
\begin{equation}
p_0(\ner|\nex_0) =f_0(\ner|\nex_0) + i\eta g_0(\ner|\nex_0)=\cos(k\nor_0\cdot(\ner-\nex_0))+\frac{i\eta}{k}\sin(k\nor_0\cdot(\ner-\nex_0)),\label{eq:p_func}
\end{equation}
for the smoothing function~\eqref{eq:funct_p}.

In order to construct the smoothing function  $p_1$ that satisfies the conditions in~\eqref{eq:funct_q}, on the other hand, we  consider the  LCPW that produces the function
\begin{eqnarray*}
 g_1(\ner|\nex_0) 
&=&\frac{2}{k^2}\sin\lf(\frac{k}{\sqrt{2}}\nor_0\cdot(\ner-\nex_0)\rg)\sin\lf(\frac{k}{\sqrt{2}}\bol \tau_0\cdot(\ner-\nex_0)\rg) \label{eq:q_func}
\end{eqnarray*}
which satisfies
 \begin{equation*}
\begin{gathered}
g_1(\ner|\nex_0)=0,\quad  \p_n g_1(\ner|\nex_0) = 0,\quad\p_s g_1(\ner|\nex_0) = 0,\quad \p_s\p_n g_1(\ner|\nex_0) = 1\quad\mbox{at}\quad\ner=\nex_0.
\end{gathered}\label{eq:def_gq}
\end{equation*}
Therefore, in order to construct $p_1$ it suffices to provide a LCPW $f_1$ such that
\begin{equation}
\begin{gathered}
f_1(\ner|\nex_0)=0,\quad  \p_s f_1(\ner|\nex_0) = 0,\quad\p_n f_1(\ner|\nex_0) = 1,\quad \p_s\p_n f_1(\ner|\nex_0) = 0\quad\mbox{at}\quad\ner=\nex_0.\end{gathered}\label{eq:def_ffq}
\end{equation}
In order to do so, we first consider the LCPW that produces the function
\begin{equation*}
\tilde  f_1(\ner|\nex_0) =\frac{\sqrt{2}}{k}\sin\lf(\frac{k}{\sqrt{2}}\bol\tau_0\cdot(\ner-\nex_0)\rg)\cos\lf(\frac{k}{\sqrt{2}}\bol n_0\cdot(\ner-\nex_0)\rg)\label{eq:q_func}
\end{equation*}
which satisfies the point conditions
\begin{equation*}
\begin{gathered}
\tilde f_1(\ner|\nex_0)=0,\ \  \p_n\tilde  f_1(\ner|\nex_0) = 0,\ \
\p_s \tilde f_1(\ner|\nex_0) = 1,\ \ \p_s\p_n \tilde f_1(\ner|\nex_0) = -\frac{\bol n_0\cdot\bnex''(t_0)}{|\bnex'(t_0)|^2} \ \ \mbox{at}\  \ \ner=\nex_0.
\end{gathered}\label{eq:def_f}
\end{equation*}
Therefore, clearly, a LCPW satisfying all the points conditions listed in~\eqref{eq:def_ffq} is given by 
 $$
 f_1(\ner|\nex_0) = \tilde f_1(\ner|\nex_0)+\frac{\bol n_0\cdot\bnex''(t_0)}{|\bnex'(t_0)|^2} g_1(\ner|\nex_0).
 $$
Finally, combining $f_1$ and $g_1$ we  obtain 
\begin{equation}\begin{split}
p_1(\ner|\nex_0) =&\  f_1(\ner|\nex_0)+i\eta g_1(\ner|\nex_0)\\
=&\ \lf\{\frac{\sqrt{2}}{k}\cos\lf(\frac{k}{\sqrt{2}}\bol n_0\cdot(\ner-\nex_0)\rg)+\left\{\frac{\bol n_0\cdot\bnex''(t_0)}{|\bnex'(t_0)|^2}+i\eta\rg\} \frac{2}{k^2}\sin\lf(\frac{k}{\sqrt{2}}\nor_0\cdot(\ner-\nex_0)\rg)\rg\}\\
&\ \times\sin\lf(\frac{k}{\sqrt{2}}\bol \tau_0\cdot(\ner-\nex_0)\rg).
\end{split}
\end{equation}

\section{Numerical examples\label{sec:numerics}}

This section presents  numerical examples that illustrate the properties of the smoothed CFIEs~\eqref{eq:IE_SDirichlet} and~\eqref{eq:IE_SNeumann} and as well as the  smoothed potential~\eqref{eq:reg_pot}.   

\subsection{Nystr\"om discretizations}
Three different Nystr\"om methods for the numerical solution of the smoothed CFIEs~\eqref{eq:IE_SDirichlet} and~\eqref{eq:IE_SNeumann} are briefly reviewed  in this section. In order to obtain Nystr\"om discretizations we need to provide quadrature rules for the numerical evaluation  of the smoothed integral operators~\eqref{eq:op_param_Dir} and~\eqref{eq:op_param_Neu}, which are here expressed as
\begin{equation}\int_0^{2\pi}\left\{A(t,\tau)\rho_D(\tau|t)+B(t,\tau)\rho_S(\tau|t)\rg\}\de\tau,\qquad t\in[0,2\pi],\label{eq:integral}\end{equation}
where the  kernels $A$ and $B$  correspond to $A=L$ and $B=M$ in the case of the SD-CFIE~\eqref{eq:op_param_Dir}, and  $A=H$ and $B=W$ in the case of the SN-CFIE~\eqref{eq:op_param_Neu}. We recall that the functions $\rho_D$ and $\rho_S$, introduced in~\eqref{eq:param_1} and ~\eqref{eq:param_2}, respectively, are given in terms of the density function $\phi(t)=\varphi(\bnex(t))$.

The first and simplest Nystr\"om method  considered  is based on the direct use of the classical trapezoidal rule (TR), which applied to  the  integral~\eqref{eq:integral} yields 
  \begin{equation}
  \int_0^{2\pi}\left\{A(t,\tau)\rho_D(\tau|t)+B(t,\tau)\rho_S(\tau|t)\rg\}\de\tau \approx \sum_{j=0}^{2n-1}\left\{A(t,t_j)\rho_D(t_j|t)+B(t,t_j)\rho_S(t_j|t)\rg\}w_j,\label{eq:trapezoidal_rule}
\end{equation}
  where the quadrature weights and quadrature points are given by  $w_j=h=\pi /n$ and $t_j=jh$, $j=0,\ldots,2n-1$, respectively.  A linear system of equations for the approximate values of the density function $\phi_j\approx \phi(t_j)=\varphi(\bnex(t_j))$, $j=0,\ldots,2n-1,$ is   obtained by equating the right-hand-side of~\eqref{eq:trapezoidal_rule} to the corresponding Dirichlet $f_D(t)=-u^\inc(\bnex(t))$ or Neumann $f_N(t)=-\p_n u^\inc(\bnex(t))$  data at the quadrature points $t=t_i$, $i=0,\ldots,2n-1$. Here, the first and second order derivatives of the unknown  function $\phi(t)=\varphi(\bnex(t))$ at $t=t_i$---which are needed for evaluation of 
\begin{subequations}\begin{equation}\rho_D(t_j|t_i)=\phi(t_j)-\phi(t_i)\tilde p_0(t_j|t_i)-|\bold x'(t_i)|^{-1}\phi'(t_i)\tilde p_1(t_j|t_i)\end{equation}
and 
\begin{equation}\rho_S(t_j|t_i)=i\eta\phi(t_j)-\phi(t_i)\p_n\tilde p_0(t_j|t_i)-|\bold x'(t_i)|^{-1}\phi'(t_i)\p_n\tilde p_1(t_j|t_i),\end{equation}
 as well as the diagonal term 
\begin{equation}H(t_i,t_i)\rho_D(t_i|t_i)= \frac{H_0(t_i,t_i)}{2}\left\{\phi''(t_i)-\phi(t_i)\tilde p_0''(t_i|t_i)-\frac{\phi'(t_i)}{|\bold x'(t_i)|}\tilde p_1''(t_i|t_i)\rg\}\label{eq:diag_hyper}\end{equation}\label{eq:der_approx}\end{subequations}
---are approximated by finite differences of $\phi_j$, $j=0,\ldots,2n-1$.   
  Note that although the functions $A(t_i,\tau)\rho_D(\tau|t_i)$ and $B(t_i,\tau)\rho_S(\tau|t_i)$ in~\eqref{eq:trapezoidal_rule} are continuously differentiable, they still have a polylogarithmic singularity  of the form $|t_i-\tau|^2\log|t_i-\tau|$ at $\tau=t_i$. As expected, this mild singularity limits to $O(h^3)$ the overall accuracy of the trapezoidal rule and associated Nystr\"om-method solution $\phi_j$, $j=0,\ldots,2n-1$, which would otherwise exhibit spectral (superalgebraic) accuracy if the integrands were smooth periodic functions.

To achieve higher order accuracy, of course, Nystr\"om methods that properly handle  logarithmic singularities can be utilized instead of the classical trapezoidal rule.  Following~\cite{COLTON:2012} then, we  consider the spectrally accurate  Nystr\"om method of Martensen and Kussmaul (MK) which is suited for the discretization of the smoothed integral equations~\eqref{eq:IE_SDirichlet} and~\eqref{eq:IE_SNeumann} by virtue of the fact that the integrand in~\eqref{eq:integral} can be expressed as 
\begin{equation}
A(t,\tau)\rho_D(\tau|t)+B(t,\tau)\rho_S(\tau|t) =  C_1(t,\tau)\log\lf(4\sin^2\frac{t-\tau}{2}\rg)+ C_2(t,\tau),\label{eq:splitting}
\end{equation}
where $C_1$ and $C_2$ are $2\pi$-biperiodic analytic functions. (The explicit form of the function $C_1$ and $C_2$  can be easily obtained from  identities~\eqref{eq:kernels_dirich} and~\eqref{eq:kernels_neumann}.) It thus follows from~\cite{COLTON:2012} that the quadrature rule\begin{equation}\begin{split}
\int_0^{2\pi}\lf\{C_1(t_i,\tau)\log\lf(4\sin^2\frac{t_i-\tau}{2}\rg)+ C_2(t_i,\tau)\rg\}\de\tau\approx&\ \sum_{j=0}^{2n-1}\left\{R_{|i-j|}C_1(t_i,t_j) +h\,C_2(t_i,t_j)\rg\},
\end{split}\label{eq:MK}\end{equation}
with quadrature weights given by 
$$
R_j = -\frac{2\pi}{n}\sum_{m=1}^{n-1}\frac{1}{m}\cos\frac{m j \pi}{n}-\frac{(-1)^j\pi}{n^2},\qquad j=0,\ldots,2n-1,
$$
provides a spectrally accurate approximation of the smoothed integral operators at the quadrature points $t_i=ih$, $i=0,\ldots,2n-1$. Equating the right-hand-side of~\eqref{eq:MK} to the corresponding boundary data $f_D$ or $f_N$ at the quadrature points $t_i$, $i=0,\ldots, 2n-1$, we obtain a linear system of equations for the quantities  $\phi_j$, $j=0,\ldots,2n-1,$ which approximate the desired unknown values $\phi(t_j),$  $j=0,\ldots, 2n-1$, respectively. The spectral accuracy of the Nystr\"om solution $\phi_j$, $j=0,\ldots,2n-1$, is achieved by approximating $\phi'(t_i)$  and $\phi''(t_i)$ in~\eqref{eq:der_approx} using FFT-based differentiation of the periodic sequence~$\phi_j$, $j=0,\ldots,2n-1$.

Alternatively, the spectrally accurate quadrature rule for integral kernels with $O(|t-\tau|^2\log|t-\tau|)$ singularities introduced in~\cite{boubendir2016high} could be utilized to produce high-order Nystr\"om discretizations of the smoothed CFIEs.

Finally, following~\cite{hao2014high} we also consider the high-order Nystr\"om method based on Kapur-Rokhlin quadrature rules. Utilizing the same quadrature points $t_j=j\pi/n$, $j=0,\ldots,2n-1$, this approach has an advantage over the aforementioned methods in that it does not entail evaluation  of the rather involved  diagonal term~\eqref{eq:der_approx}---as by construction the quadrature weight corresponding to the singular point $\tau=t$ is identically zero (see~\cite{hao2014high} for details).

\begin{figure}[h!]
\centering	
 \subfloat[][SD-CFIE~\eqref{eq:IE_SDirichlet}]{\includegraphics[scale=0.57]{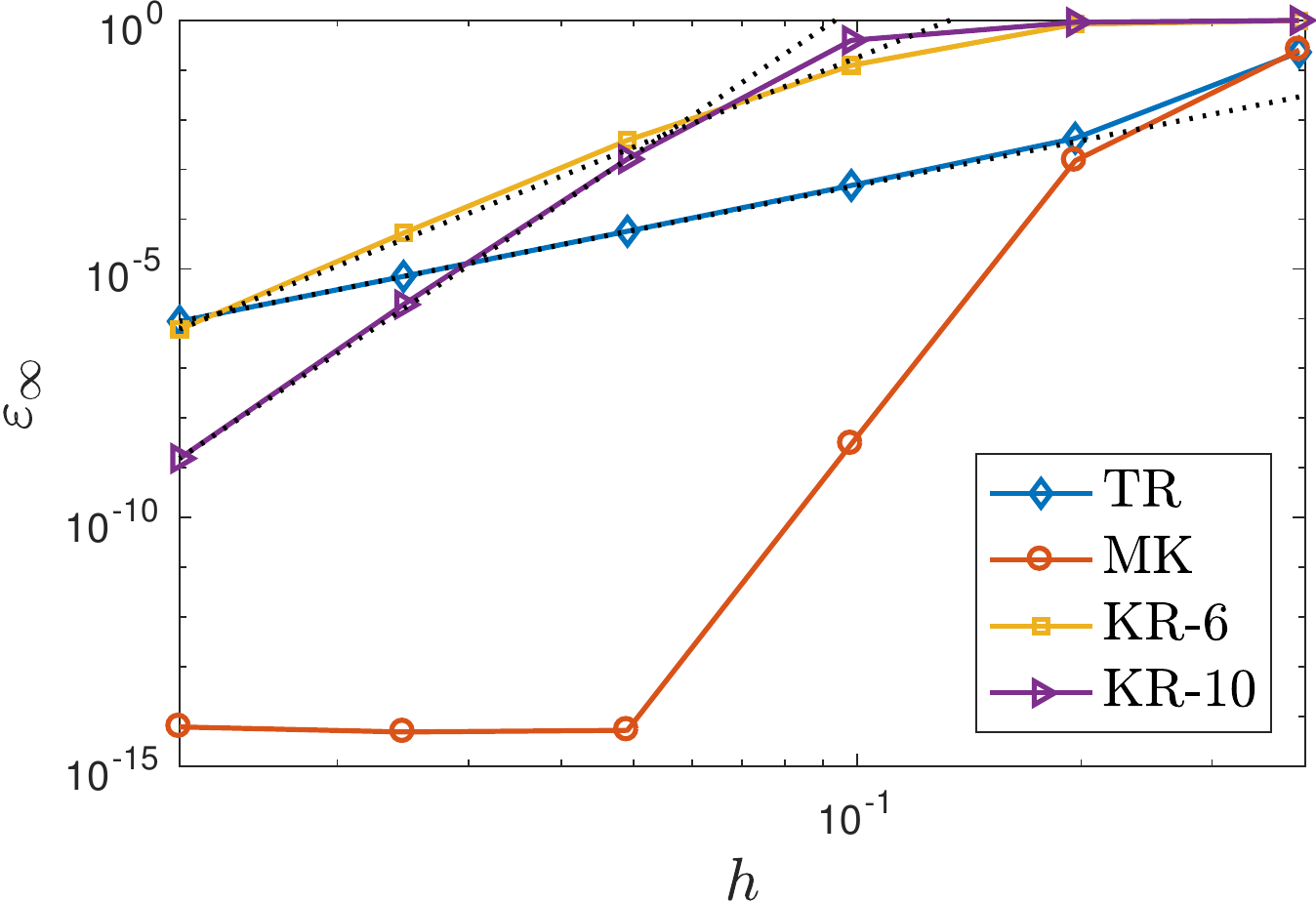}\label{fig_SD_CFIE}}\quad
 \subfloat[][SN-CFIE~\eqref{eq:IE_SNeumann}]{\includegraphics[scale=0.57]{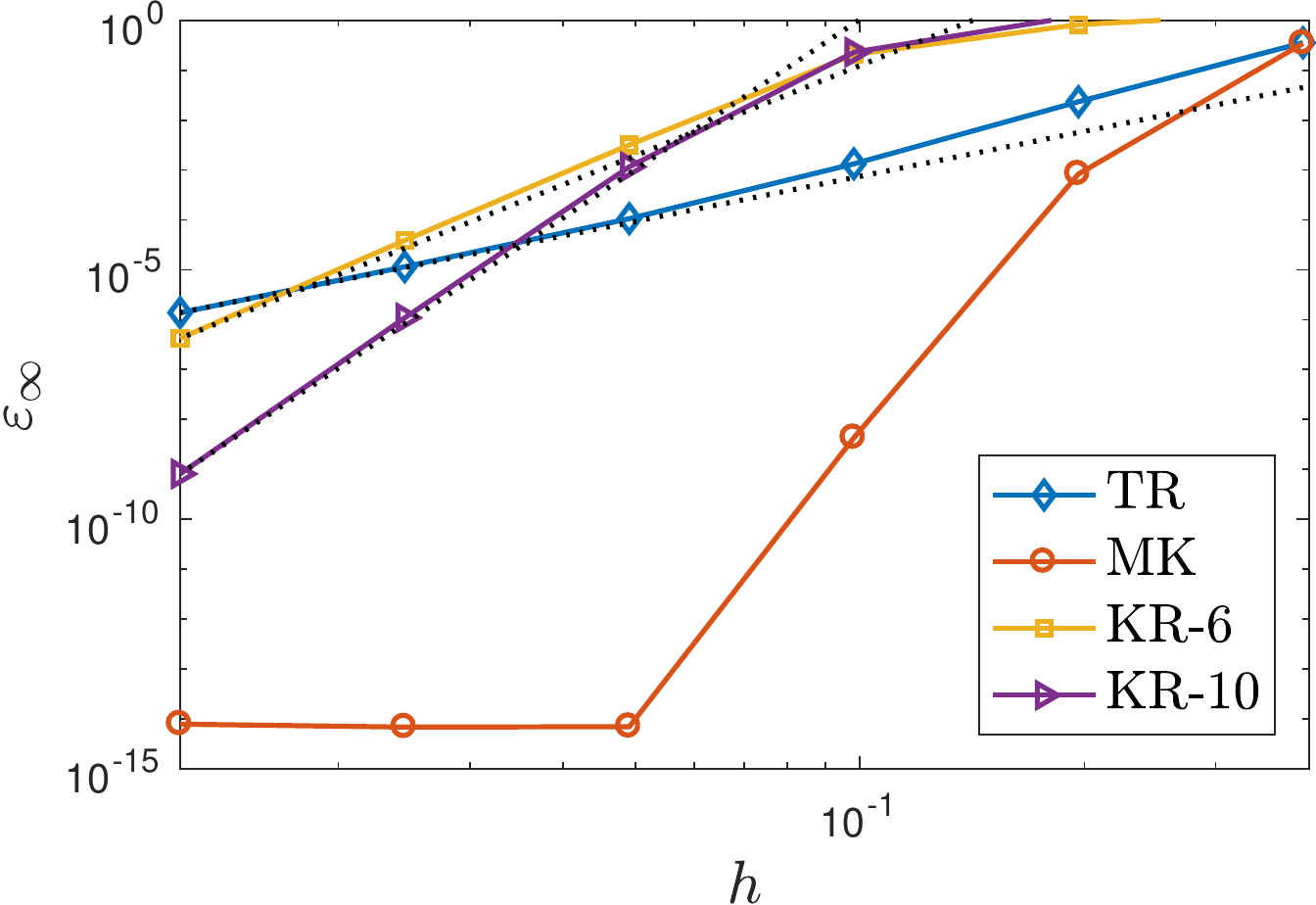}\label{fig_SN_CFIE}}
\caption{Far-field numerical errors for various grid sizes $h>0$ obtained from the smoothed integral equations SD-CFIE (left) and SN-CFIE (right)  using the various Nystr\"om discretization considered in this section;  trapezoidal rule (TR), Martensen \& Kussmaul (MK) and Kapur-Rokhlin of orders six (KR-6) and  ten (KR-10). The dotted lines have slopes three, six and ten. All solutions where computed for the fixed wavenumber $k=4$.}\label{fig:results_RCFiE}
\end{figure}  

\subsection{Smooth obstacles}
 In our first numerical experiment we compare the accuracy of the various Nystr\"om discretizations for the  solution of the smoothed CFIEs~\eqref{eq:IE_SDirichlet} and~\eqref{eq:IE_SNeumann} for the exterior boundary value problems~\eqref{eq:DEP} and~\eqref{eq:NEP}, respectively, that result from the scattering of an incident plane-wave $u^\inc(\nex) = \e^{ik\bol d\cdot\nex}$, $\bol d=(\cos\pi/8,\sin\pi/8)$, that impinges on a kite-shaped obstacle with boundary 
$\Gamma = \{(\cos t+0.65(\cos 2t-1),1.5\sin t)\in\R^2,t\in[0,2\pi)\}.$ The parameter value $\eta=k$ is utilized in all the numerical examples considered in this paper.

 Figure~\ref{fig:results_RCFiE}  displays the maximum relative errors
 \begin{equation*}\varepsilon_\infty=\max_{|\hat\nex|=1}\lf\{\frac{|\tilde u_{\infty}(\hat\nex)-u_{\infty}(\hat\nex)|}{|u_{\infty}(\hat\nex)|}\rg\},\label{eq:errors}\end{equation*}
 in the approximate far-field pattern $\tilde u_\infty$ obtained from the numerical solution~\eqref{eq:IE_SDirichlet} and~\eqref{eq:IE_SNeumann} for various grid sizes $h=\pi/n$. The numerical errors displayed in Figure~\ref{fig:results_RCFiE} are measured with respect to a reference highly accurate far-field pattern~$u_\infty$. Here we recall that the far-field pattern  corresponding to an integral equation solution~$\varphi$ is given~by~\cite{COLTON:2012}
$$ u_\infty(\hat\nex)=\frac{\e^{-i\pi/4}}{\sqrt{8\pi k}}\int_{\Gamma}\lf\{k \nor(\ney)\cdot\hat\nex+\eta\rg\}\e^{-ik\,\hat\nex\cdot\ney}\varphi(\ney)\de s(\ney)\quad (|\hat\nex|=1).
$$
As can be observed in Figure~\ref{fig:results_RCFiE}, the three Nystr\"om discretizations yield the expected convergence rates which are in turn determined by the order of the associated quadrature rules.  Interestingly,  it can be noted that for a wide range of grid sizes $h$  the low-order TR method produces more accurate results than the high-order Nystr\"om method based on the Kapur-Rokhlin quadrature rules or orders six (KR-6) and ten (KR-10).

 \begin{table}
   \begin{center}
   \scalebox{0.9}{
\begin{tabular}{c|c|c|c|c|c|c|c|c|c|c|c}
\toprule\rule{0pt}{2.5ex} 
$k$ &$N$& \multicolumn{2}{c|} {TR SD-CFIE}  & \multicolumn{2}{c|} {MK D-CFIE} & \multicolumn{2}{c|} {MK SD-CFIE} & \multicolumn{2}{c|} {KR-10 D-CFIE}& \multicolumn{2}{c} {KR-10 SD-CFIE }\\
\cline{3-12}
&$(\frac{2\pi}{h})$& It. &  $\varepsilon_\infty$ & It. & $\varepsilon_\infty$ & It.& $\varepsilon_\infty$ & It.&$\varepsilon_\infty$& It.&$\varepsilon_\infty$\\
\hline
1&30&8&$7.15\times 10^{-4}$&8&$1.01\times 10^{-5}$&8&$6.08\times 10^{-4}$&58&$9.24\times10^{-1}$&58&$5.59\times 10^{-1}$\\
4&120&10&$9.52\times 10^{-5}$&10&$2.05\times 10^{-5}$&10&$2.05\times 10^{-5}$&83&$3.65\times10^{-3}$&83&$4.21\times 10^{-3}$\\
16&480&12&$8.15\times 10^{-5}$&12&$2.19\times 10^{-5}$&12&$2.19\times 10^{-5}$&214&$2.38\times10^{-3}$&215&$2.13\times 10^{-3}$\\
64&1920&16&$1.05\times 10^{-4}$&16&$1.39\times 10^{-5}$&16&$1.39\times 10^{-5}$&793&$1.73\times 10^{-3}$&809&$1.67\times 10^{-3}$\\
\toprule\rule{0pt}{2.5ex} 
$k$ &$N$& \multicolumn{2}{c|} {TR SN-CFIE}  & \multicolumn{2}{c|} {MK N-CFIE} & \multicolumn{2}{c|} {MK SN-CFIE} & \multicolumn{2}{c|} {KR-10 N-CFIE}& \multicolumn{2}{c} {KR-10 SD-CFIE }\\
\cline{3-12}
&$(\frac{2\pi}{h})$& It. &  $\varepsilon_\infty$ & It. & $\varepsilon_\infty$ & It.& $\varepsilon_\infty$ & It.&$\varepsilon_\infty$& It.&$\varepsilon_\infty$\\
\hline
1&30&17&$3.35\times 10^{-4}$&20&$ 2.16\times10^{-5}$&20&$7.32\times 10^{-5}$&28&$2.47\times 10^{-1}$&58&$2.88\times 10^{-0}$\\
4&120&24&$1.80\times 10^{-4}$&29&$5.49\times10^{-5}$&29&$4.74\times 10^{-5}$&103&$3.32\times 10^{-3}$&84&$3.41\times 10^{-3}$\\
16&480&23&$1.00\times 10^{-4}$&27&$3.04\times10^{-5}$&27&$3.03\times 10^{-5}$&346&$1.45\times 10^{-3}$&230&$9.61\times 10^{-4}$\\
64&1920&12&$1.03\times 10^{-4}$&14&$2.22\times10^{-5}$&14&$2.19\times 10^{-5}$&998&$1.29\times 10^{-3}$&719&$7.97\times 10^{-4}$\\
\bottomrule
\end{tabular}}
\caption{\label{tab:GMRES} Number of  GMRES iterations needed to solve of the linear systems  resulting from the TR, MK and KR-10 Nystr\"om discretizations of the smoothed~(SD-CFIE~\eqref{eq:IE_SDirichlet}  and SN-CFIE~\eqref{eq:IE_SNeumann})  and non-smoothed (D-CFIE~\eqref{eq:IE_Dirichlet} and N-CFIE~\eqref{eq:IE_Neumann}) integral equations. The upper panel displays the results for the Dirichlet integral equations, D-CFIE and SD-CFIE, and the lower panel displays the results for the Neumann integral equations, N-CFIE and SN-CFIE. The GMRES tolerance $10^{-6}$ was utilized in all the examples considered in this table.}
\end{center}
 \end{table}

Table~\ref{tab:GMRES}, on the other hand, displays the number of GMRES iterations required to solve the linear systems resulting from the three  Nystr\"om discretizations of the smoothed and non-smoothed integral equations. 
These results show that the numbers of GMRES iterations required to solve the smoothed integral equations (SD-CFIE~\eqref{eq:IE_SDirichlet}  and SN-CFIE~\eqref{eq:IE_SNeumann}) do not differ considerably from those required to solve the non-smoothed integral equations (D-CFIE~\eqref{eq:IE_Dirichlet} and N-CFIE~\eqref{eq:IE_Neumann}---these are, of course, expected results in view of the operator identities established in Theorems~\ref{lem:equal} and~\ref{lem:equal_N}. Here we point out that the inordinate large number of GMRES iteration required by the KR-10 method makes it in practice not amenable to iterative linear algebra solvers~\cite{hao2014high}. 
 
 \begin{figure}[h!]
\centering	
 \subfloat[][Combined potential (left) and smoothed-combined potential (right) applied to  SD-CFIE solution obtained via  MK Nystr\"om method.]{\includegraphics[scale=0.0550]{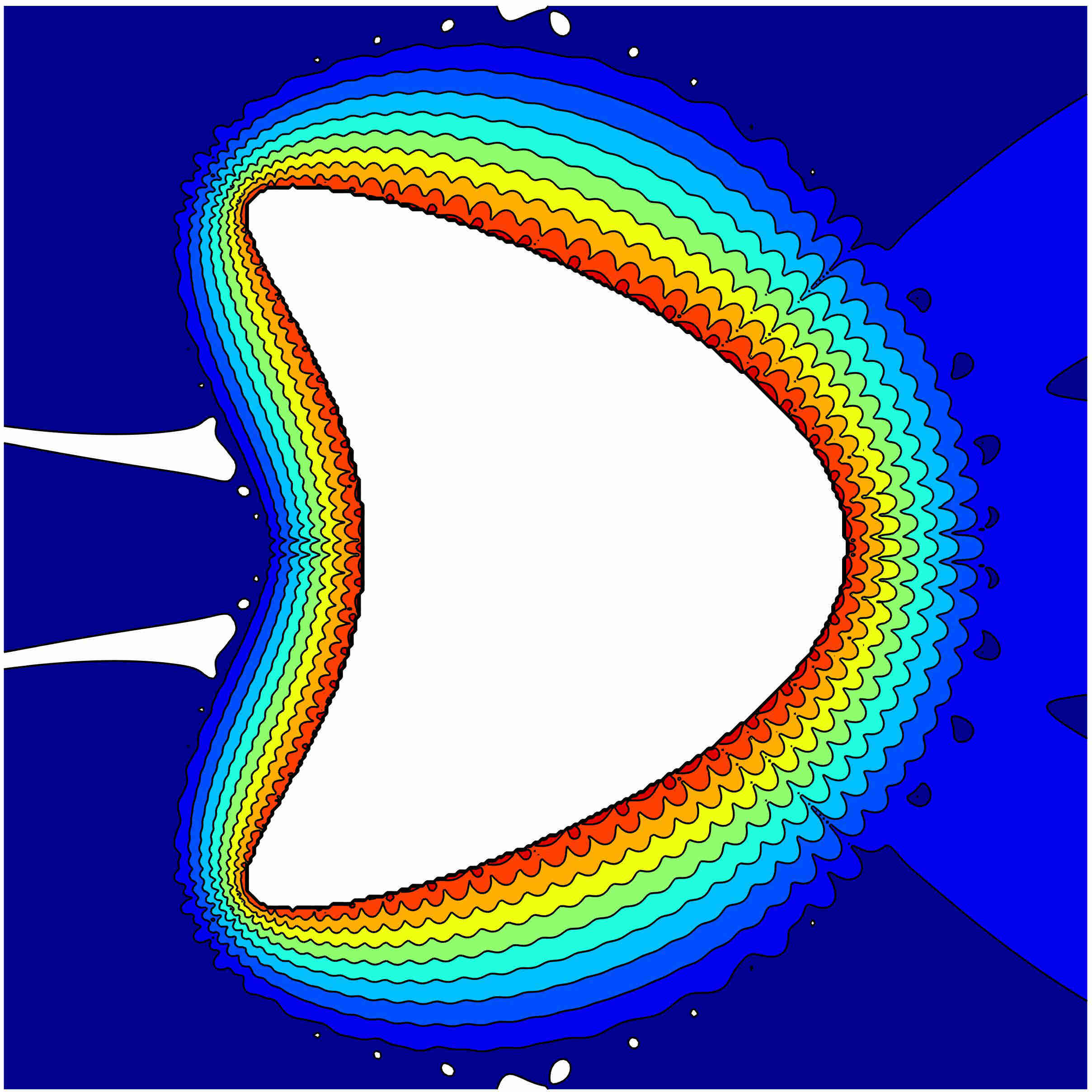}\qquad \includegraphics[scale=0.0550]{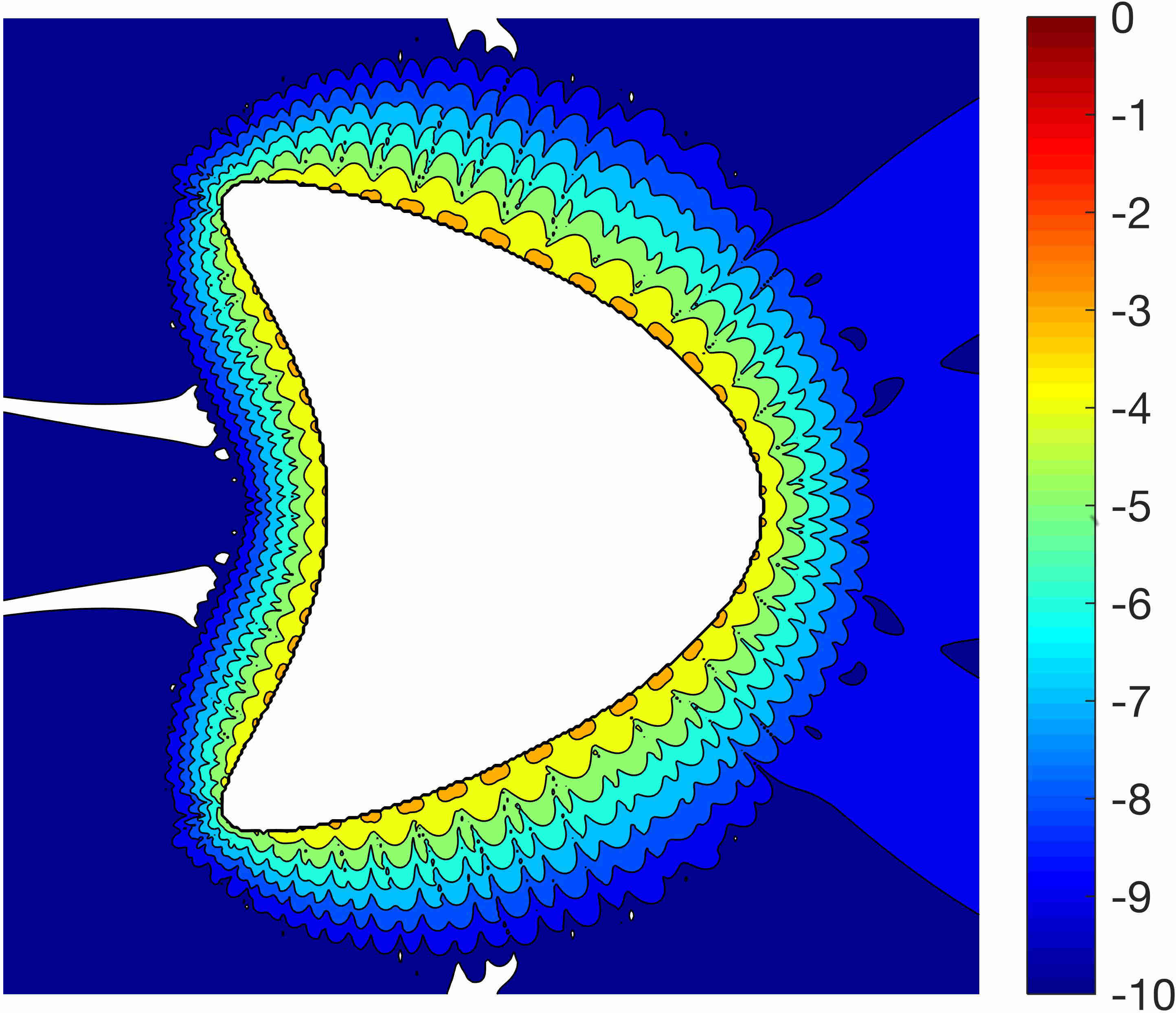}\label{5a}}\\
  \subfloat[][Combined potential (left) and smoothed-combined potential (right) applied to SD-CFIE solution obtained via  TR Nystr\"om method.]{\includegraphics[scale=0.0550]{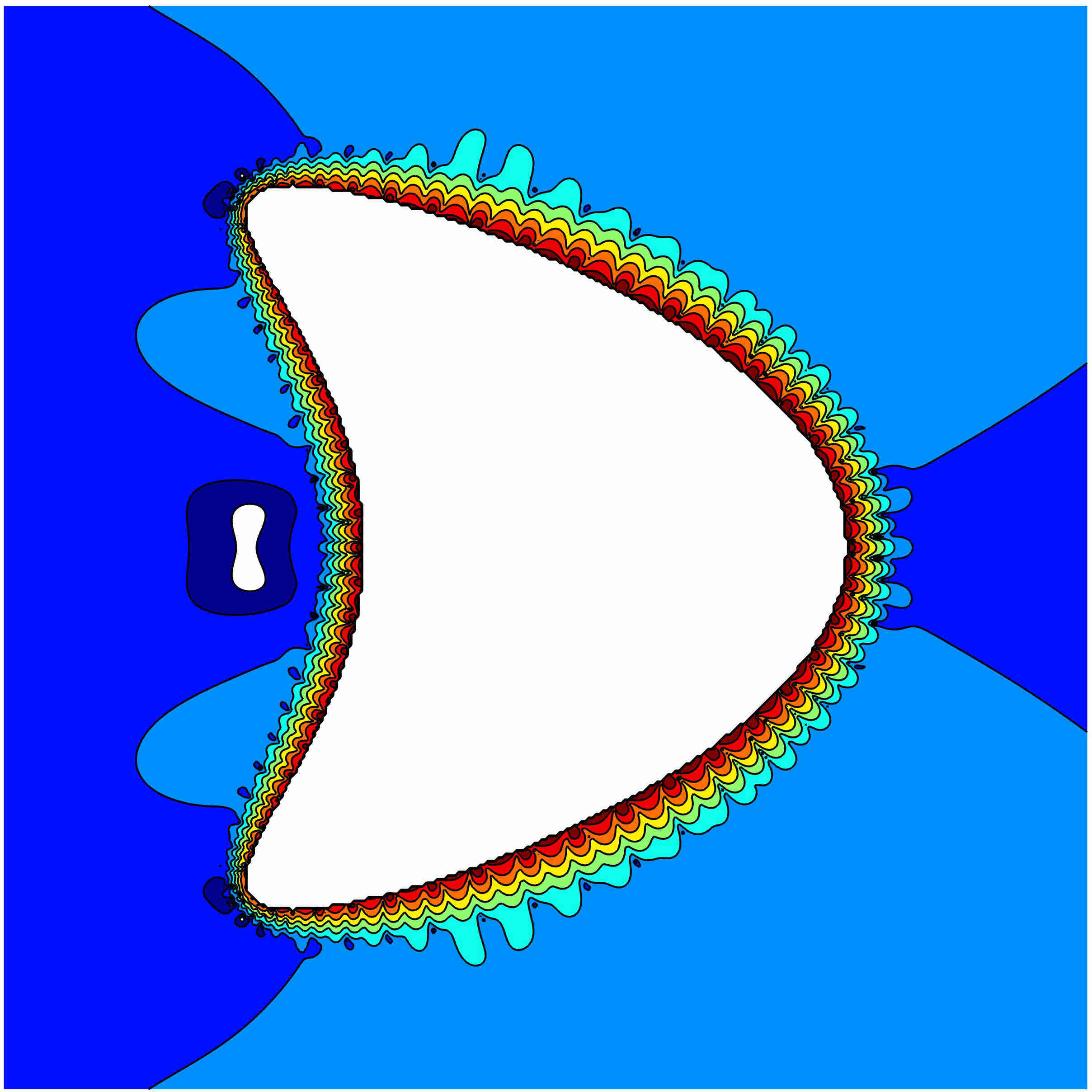}\qquad \includegraphics[scale=0.0550]{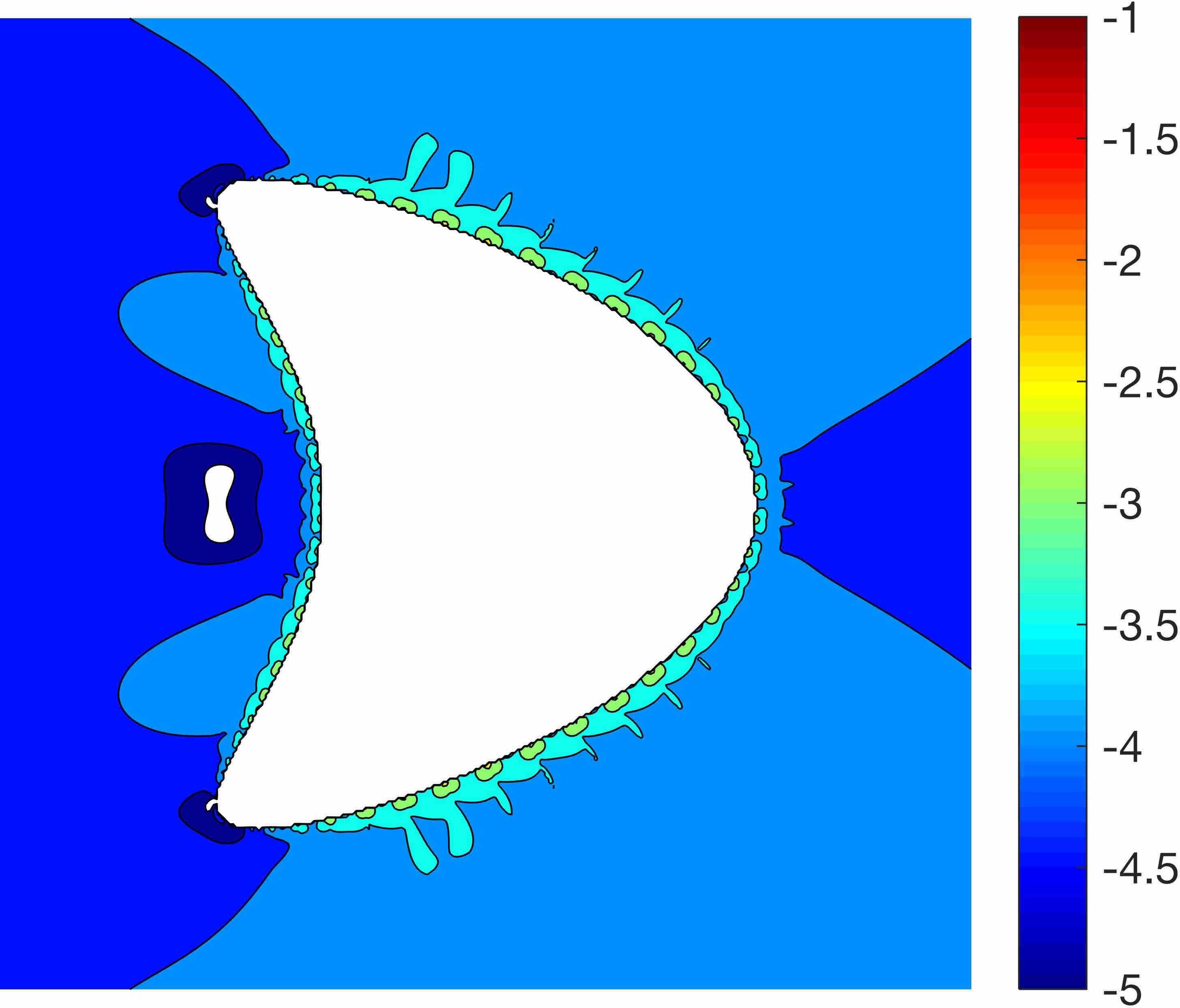}\label{5b}}
\caption{Errors ($\log_{10}|\tilde u(\nex)-u^{\mathrm{exact}}(\nex)|$) in the approximate fields $\tilde u$ produced by use of the non-smoothed~\eqref{eq:CFP} and smoothed~\eqref{eq:reg_pot} combined potentials applied to SD-CFIE solutions obtained via (a)~MK  and (b)~TR  Nystr\"om methods. The SD-CFIE~\eqref{eq:IE_SDirichlet} was discretized using $N=2\pi/h=64$ quadrature points, and the wavenumber $k=4$ was utilized in all the examples considered in these figures.}\label{fig:results_pot_close}
\end{figure}  
 Our second numerical experiment illustrates the properties of the smoothed combined potential~\eqref{eq:reg_pot} when evaluated at target points close to boundaries.  In this experiment we consider the exterior Dirichlet problem~\eqref{eq:DEP} with boundary data $u^\inc(\nex)=-H_0^{(1)}(k|\nex|)$ on $\Gamma$ so that the exact solution of~\eqref{eq:DEP} is $u^{\mathrm{exact}}(\nex)=H_0^{(1)}(k|\nex|)$ in $\R^2\setminus\overline\Omega$. Four different approximate solutions~$\tilde u$ of~\eqref{eq:DEP} are obtained by solving the smoothed integral equation~\eqref{eq:IE_SDirichlet} by means of the MK and TR methods, and then utilizing the resulting  density functions to produce the near fields via the combined and smoothed-combined potentials given in~\eqref{eq:CFP} and \eqref{eq:reg_pot}, respectively.  Figure~\ref{fig:results_pot_close} displays the logarithm (in base 10) of the absolute errors $|\tilde u(\nex)-u^{\mathrm{exact}}(\nex)|$ within a region near the boundary $\Gamma$ of the kite-shaped obstacle. This figure  shows that, for the given discretization level, the smoothed potential produces near fields that are at least three digits more accurate than those obtained through the non-smoothed combined potential, in both MK and TR cases.

In the final numerical experiment of this section we illustrate the advantages of the smoothed operators~\eqref{eq:dir_near} and~\eqref{eq:neu_near}, presented in Section~\ref{sec:multiple}, for the solution problems involving obstacles that are close to each other.  This experiment considers a geometric configuration consisting of two kite-shaped obstacles with boundaries $\Gamma_1= \{(\cos t+0.65(\cos 2t-1)-1-d/2,1.5\sin t)\in\R^2,t\in[0,2\pi)\}$ and $\Gamma_2= \{(-\cos t-0.65(\cos 2t-1)+1+d/2,1.5\sin t)\in\R^2,t\in[0,2\pi)\}$ separated by a distance $d>0$. In order to assess the numerical errors we construct exact solutions to both Dirichlet~\eqref{eq:DEP} and Neumann~\eqref{eq:NEP} problems by imposing the boundary conditions $u^\inc(\nex)=-(H_0^{(1)}(k|\nex-\nex_1|)+H_0^{(1)}(k|\nex-\nex_2|))$  and $\p_n u^\inc=-\p_n(H_0^{(1)}(k|\nex-\nex_1|)+H_0^{(1)}(k|\nex-\nex_2|))$ on $\Gamma_1\cup\Gamma_2$, respectively, where  $\nex_1=-(1+d/2,0)\in\Gamma_1$ and $\nex_2=(1+d/2,0)\in\Gamma_2$ . The exact solution to both problems~\eqref{eq:DEP} and~\eqref{eq:NEP} is then given by $u_{\mathrm{exact}}(\nex)=H_0^{(1)}(k|\nex-\nex_1|)+H_0^{(1)}(k|\nex-\nex_2|)$. 

Figure~\ref{fig:results_close_convergence} displays the far-field errors resulting from application of the MK and TR methods to the Dirchlet and Neumann integral equations with smoothed integral operators~\eqref{eq:dir_near} and~\eqref{eq:neu_near}, respectively, and the far-field errors resulting from application of the MK method to the non-smoothed integral equations D-CFIE~\eqref{eq:IE_Dirichlet} and N-CFIE~\eqref{eq:IE_Neumann}. Figure~\ref{fig_fix_h}, in particular,  displays the errors for various separations $d>0$ and a fixed grid size $h=\pi/32$. This figure  reveals the pronounced accuracy deterioration of the MK solutions (blue  and red curves) as  $d\to 0$ in both Dirichlet (left) and Neumann (right) cases. This  deterioration is significantly milder in the case of the smoothed integral equations (red curves). The accuracy of the TR solutions (yellow curves), in turn,  does not seem to deteriorate $d\to 0$. Figure~\ref{fig_fix_d}, on the other hand, displays the far-field errors for a fixed separation $d=10^{-5}$ and various grid sizes $h>0$. This figure demonstrates that the use the smoothed integral equation (red and yellow curves) improves significantly the accuracy of the far-field in the Neumann case (right). In the Dirichlet case (left), however, the far-fields resulting from smoothed and non-smoothed integral equations exhibit similar convergence rates and accuracies.

 \begin{figure}[h!]
\centering	
 \subfloat[][Far-field errors in  Dirichlet (left) and Neumann (right) solutions for various separations $d>0$ and a fixed grid size $h=\pi/32$.]{\includegraphics[scale=0.57]{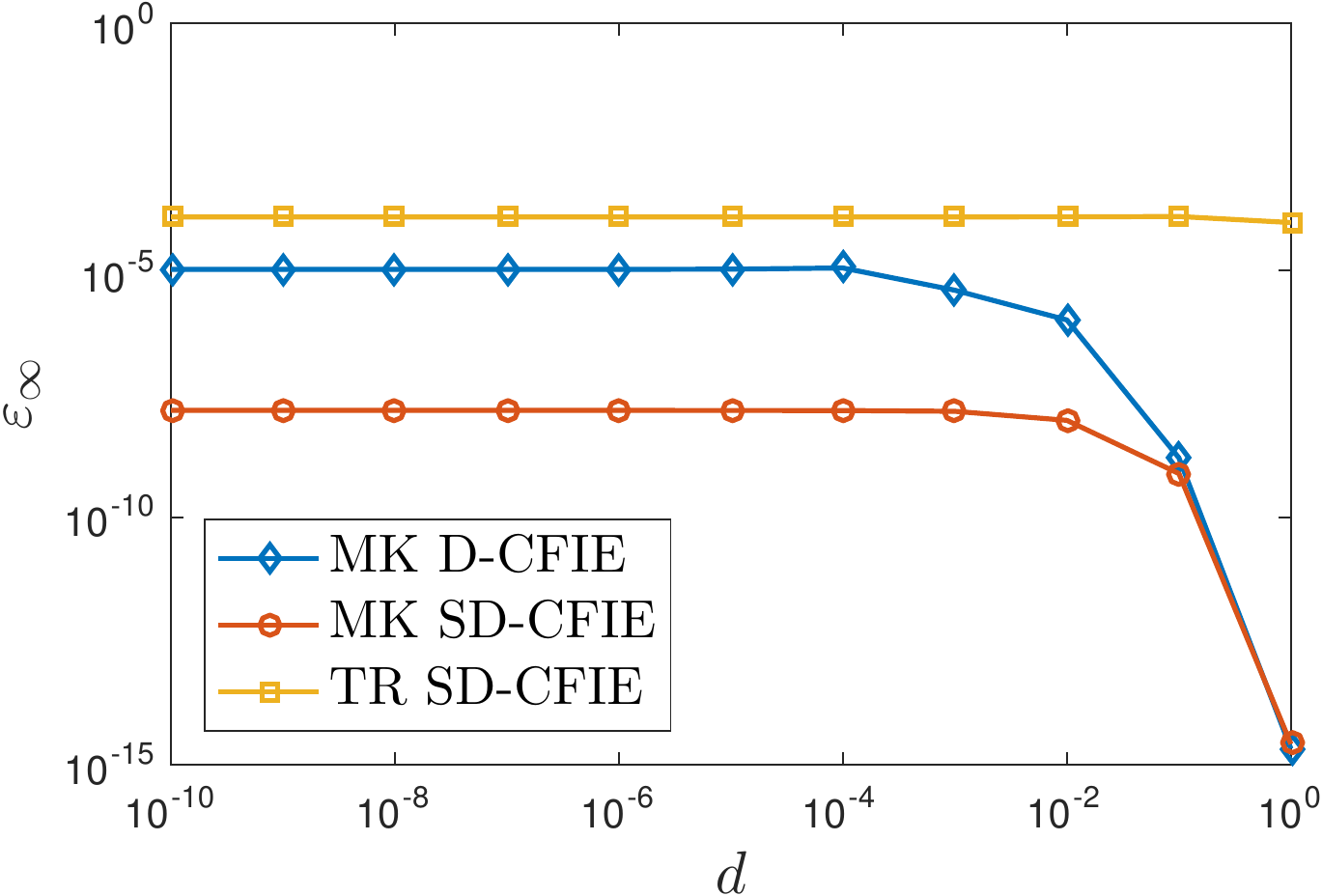}\qquad \includegraphics[scale=0.57]{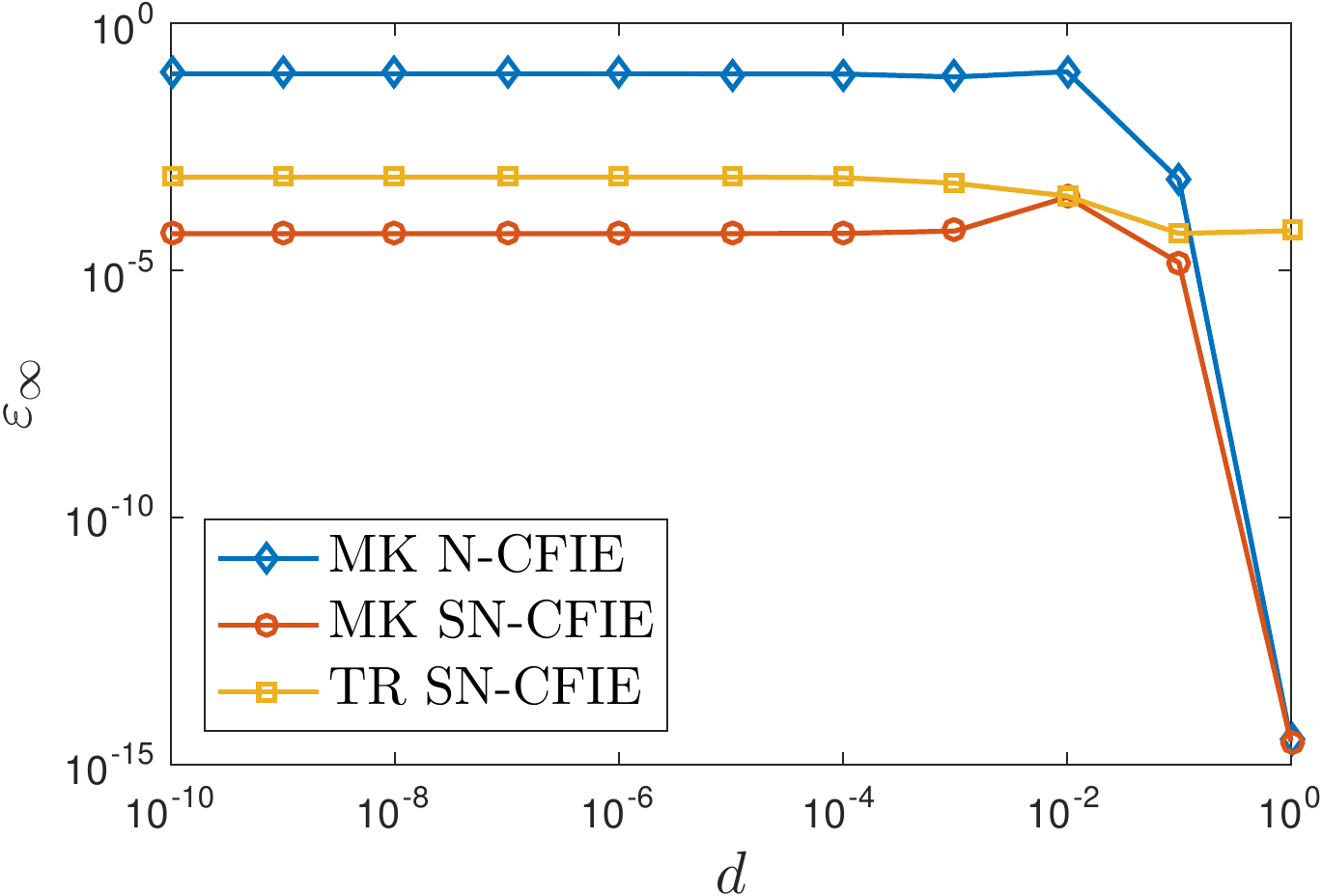}\label{fig_fix_h}}\\
  \subfloat[][Far-field errors in  Dirichlet (left) and Neumann (right) solutions for various gird sizes $h>0$ and a fixed separation $d=10^{-5}$.]{\includegraphics[scale=0.57]{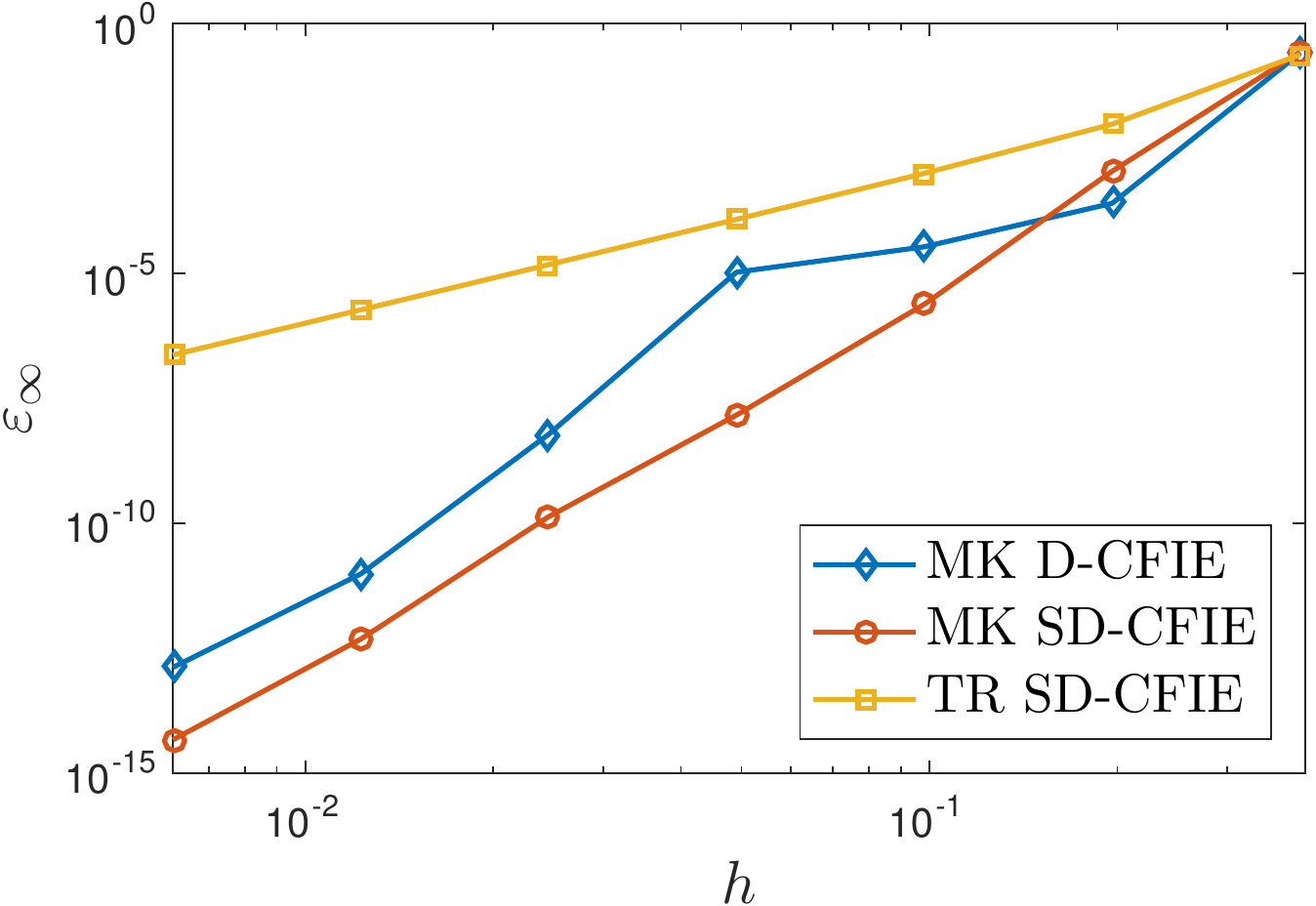}\qquad \includegraphics[scale=0.57]{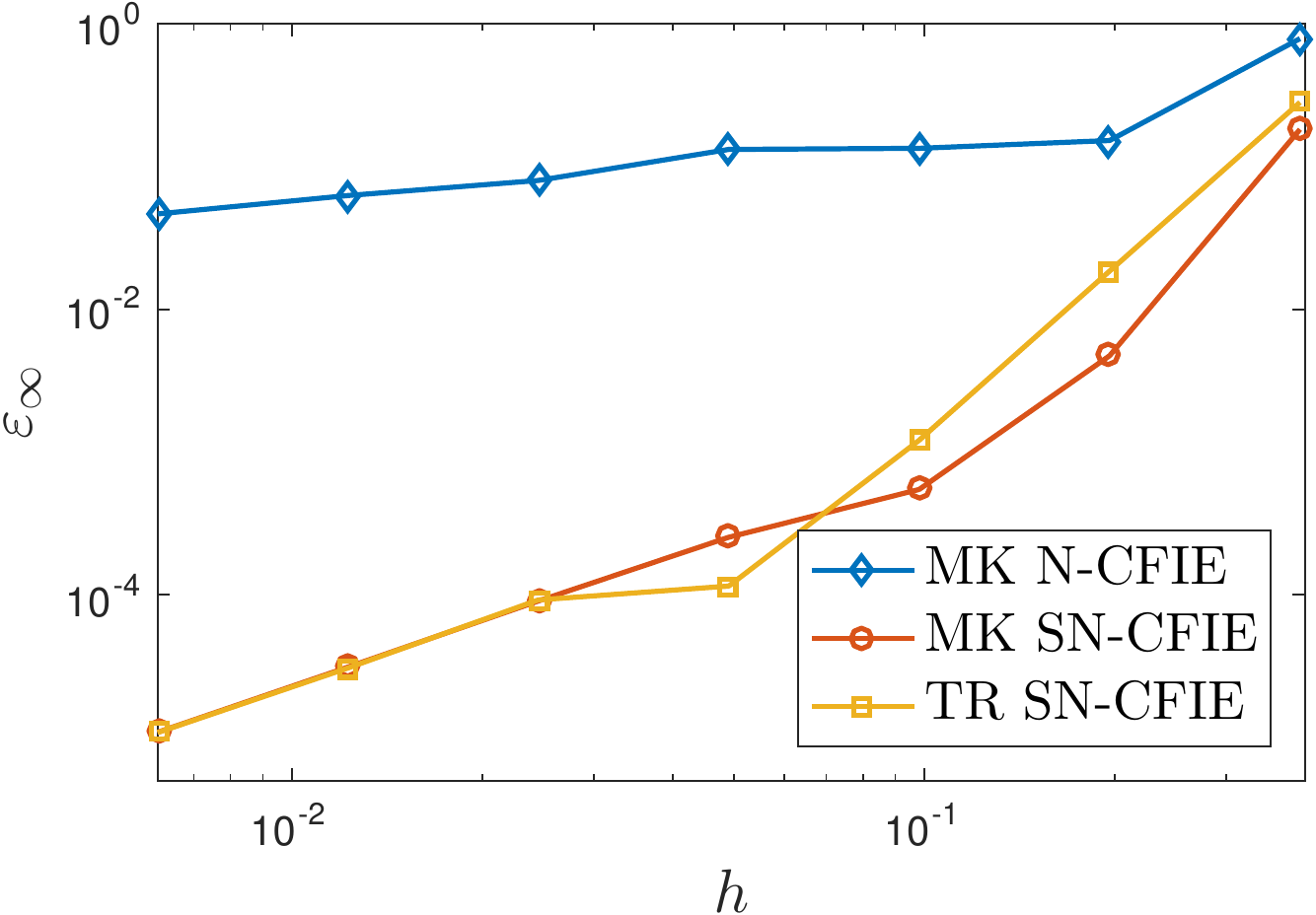}\label{fig_fix_d}}
\caption{Far-field errors in solutions of Dirichlet~\eqref{eq:DEP} and Neumann~\eqref{eq:NEP} problems involving two close kite-shaped obstacles, produced by smoothed and non-smoothed integral equation solutions obtained by means of MK and TR Nystr\"om methods. The wavenumber $k=4$ was utilized in all the examples considered in these plots.}\label{fig:results_close_convergence}
\end{figure}

Figures~\ref{fig:results_near_close_dir} and~\ref{fig:results_near_close_neu}, finally, show the logarithm (in base 10) of the absolute errors $|\tilde u(\nex)-u^{\mathrm{exact}}(\nex)|$ for the Dirichlet and Neumann problems, respectively, where $\tilde u$ denotes any of the approximate solutions obtained by means of the smoothed and non-smoothed integral equations using the smoothed and non-smoothed combined potentials. 

  \begin{figure}[h!]
\centering	
 \subfloat[][D-CFIE and combined potential]{\includegraphics[scale=0.0550]{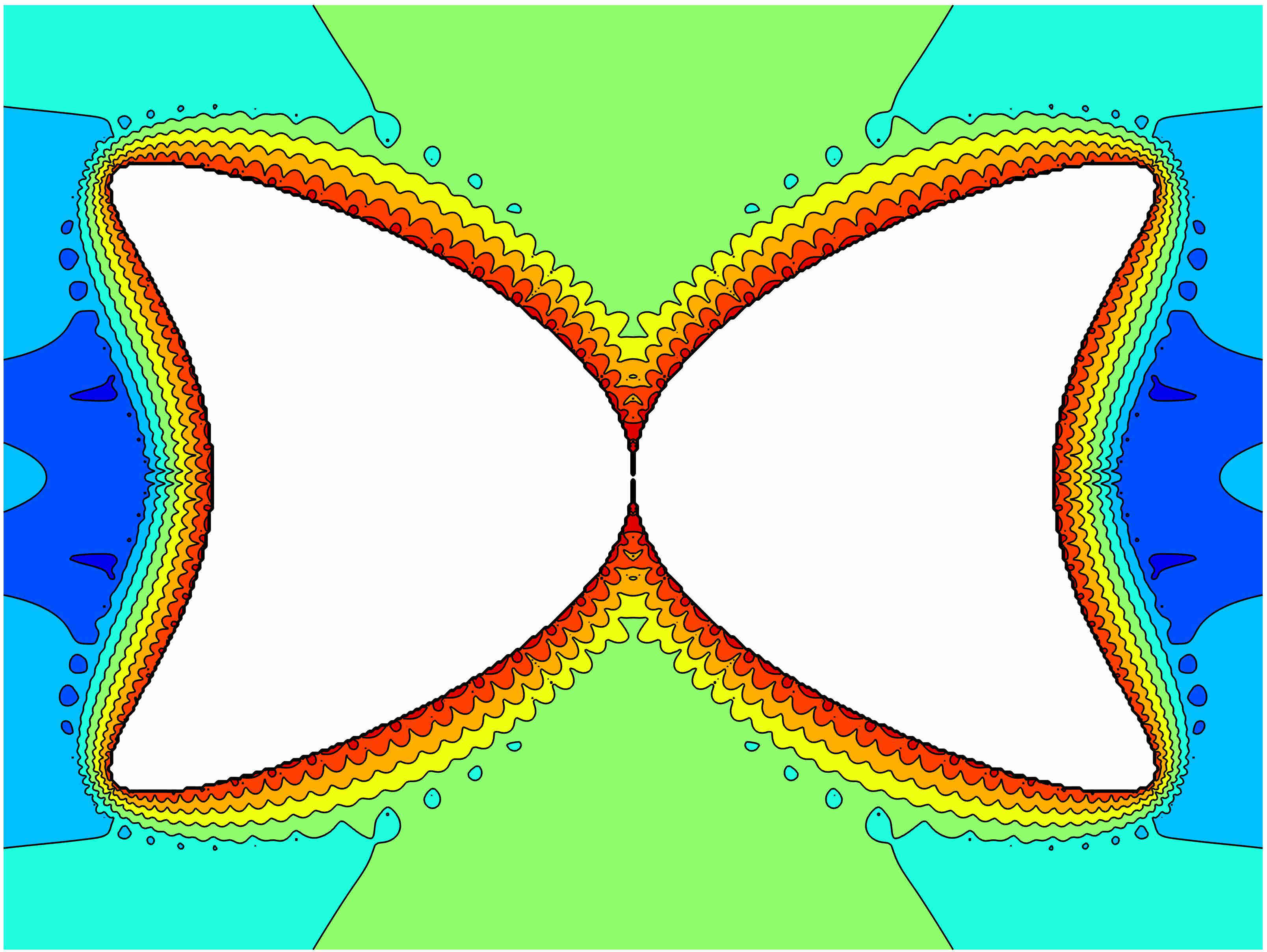}\label{7a}}\qquad
 \subfloat[][D-CFIE and smoothed combined potential]{\includegraphics[scale=0.0550]{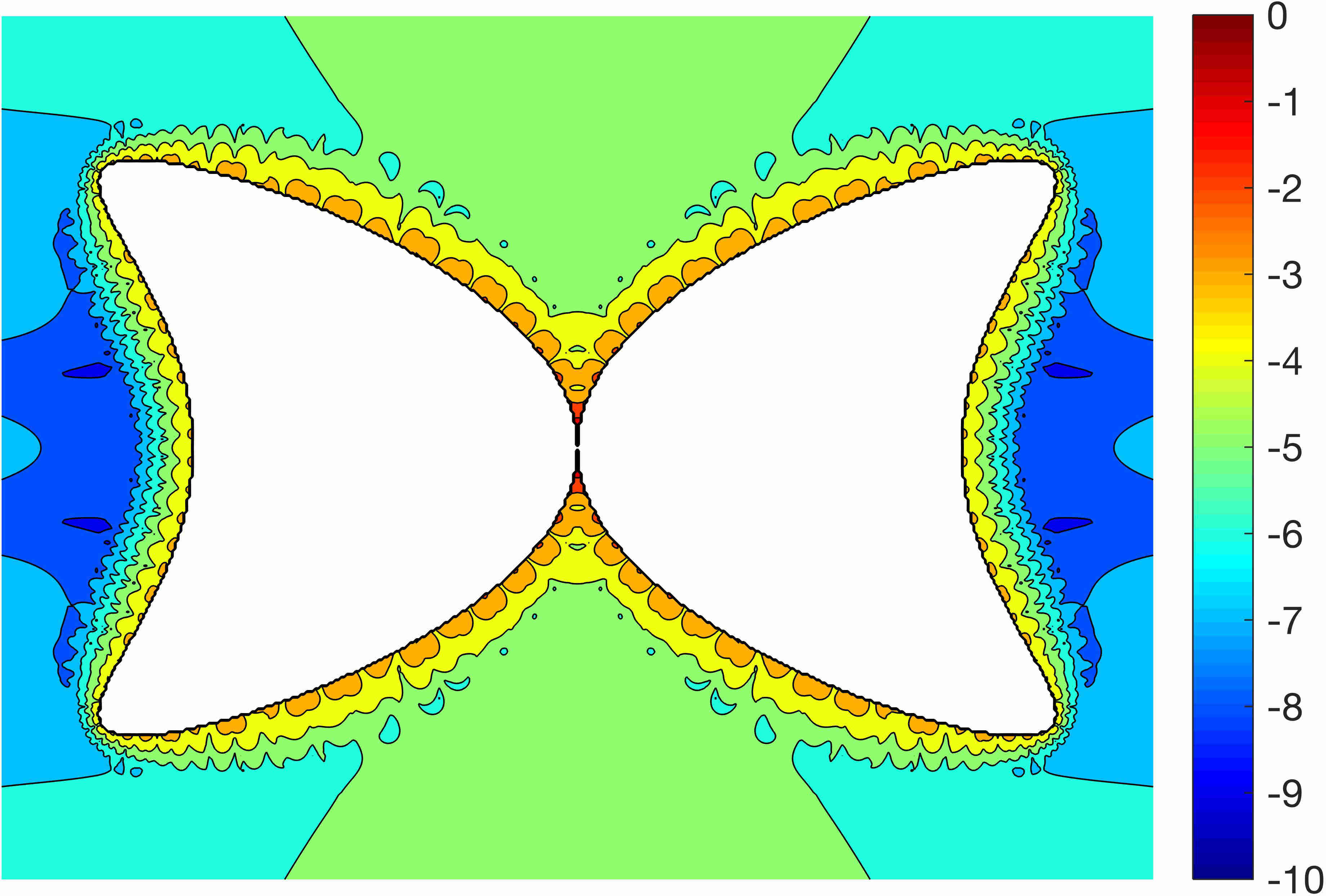}\label{8b}}\\
  \subfloat[][SD-CFIE and combined potential]{\includegraphics[scale=0.0550]{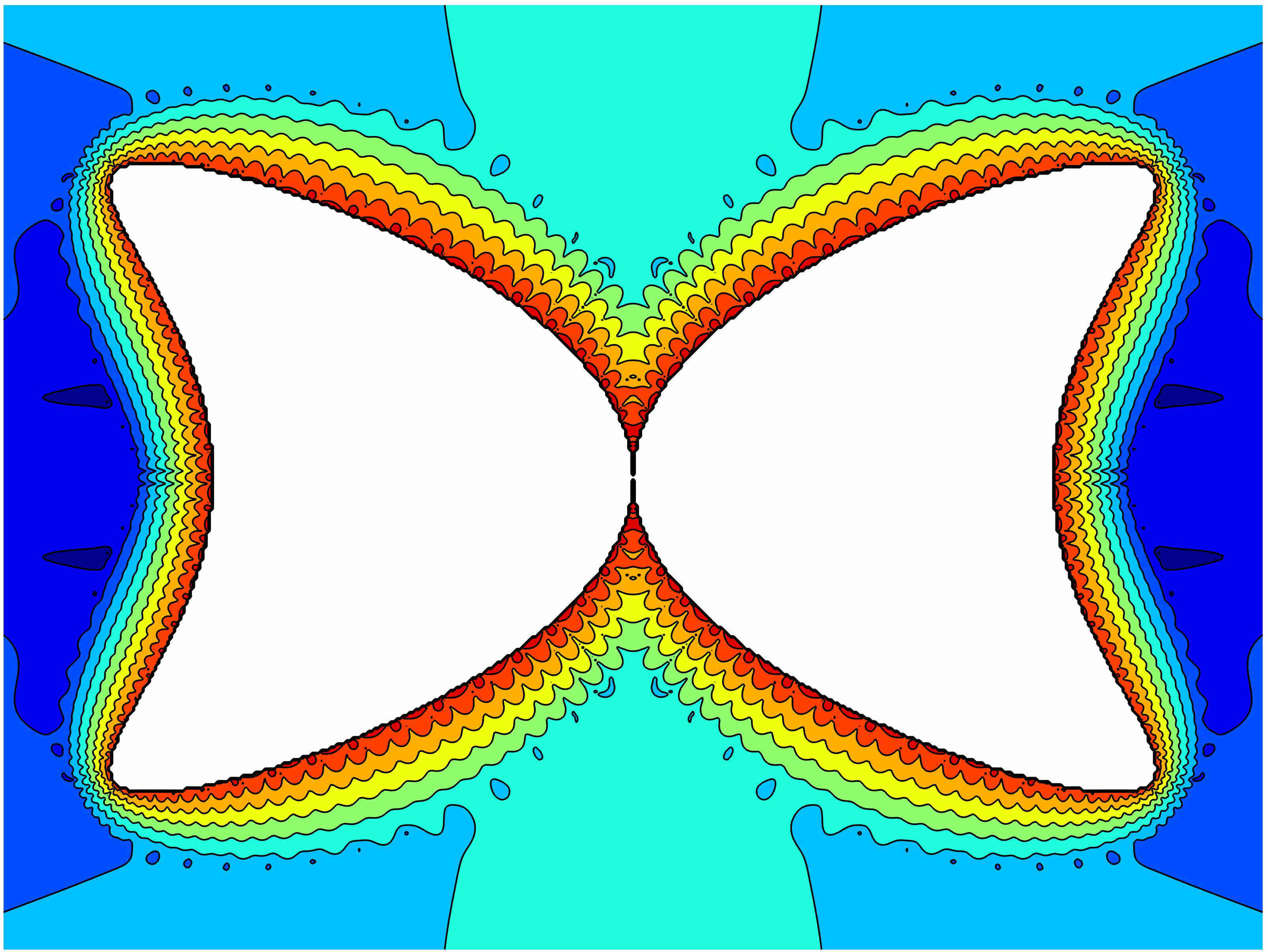}\label{7c}}\qquad
 \subfloat[][SD-CFIE and smoothed combined potential]{\includegraphics[scale=0.0550]{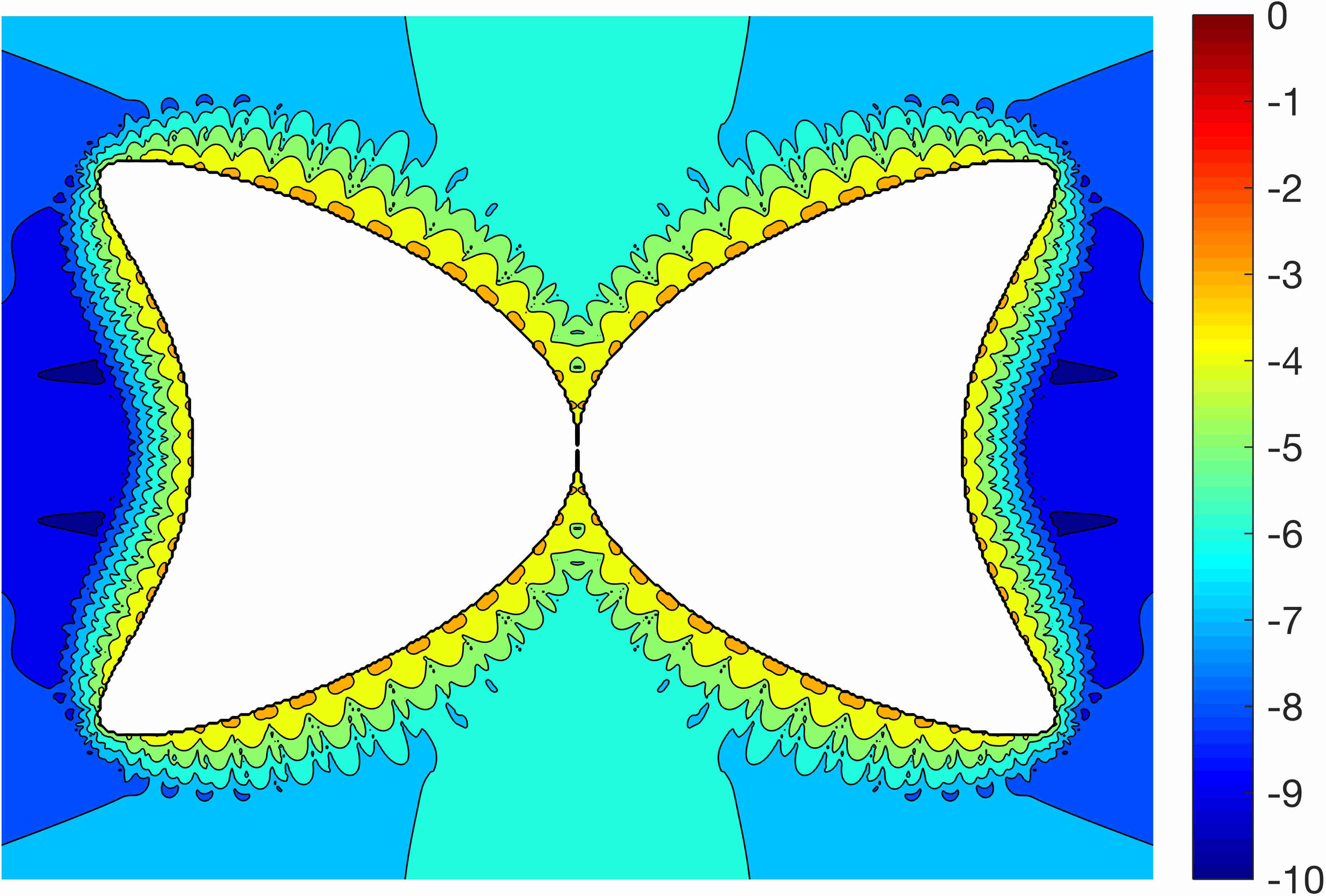}\label{8d}}
\caption{Errors ($\log_{10}|\tilde u(\nex)-u^{\mathrm{exact}}(\nex)|$) in the approximate solutions $\tilde u$ of the Dirichlet problem~\eqref{eq:DEP} involving two close kite-shaped obstacles separated by a distance $d=10^{-5}$. (a) Combined potential applied to D-CFIE solution; (b) smoothed combined field potential applied to D-CFIE soltuion; (c) combined potential applied to SD-CFIE solution, and (d) smoothed combined potential applied to SD-CFIE solution. The integral equations were discretized using the MK Nystr\"om method with $N=2\pi/h=64$ quadrature points on each curve. The wavenumber $k=4$ was utilized in all the examples considered in these plots.}\label{fig:results_near_close_dir}
\end{figure}  

 \begin{figure}[h!]
\centering	
 \subfloat[][N-CFIE and combined potential]{\includegraphics[scale=0.0550]{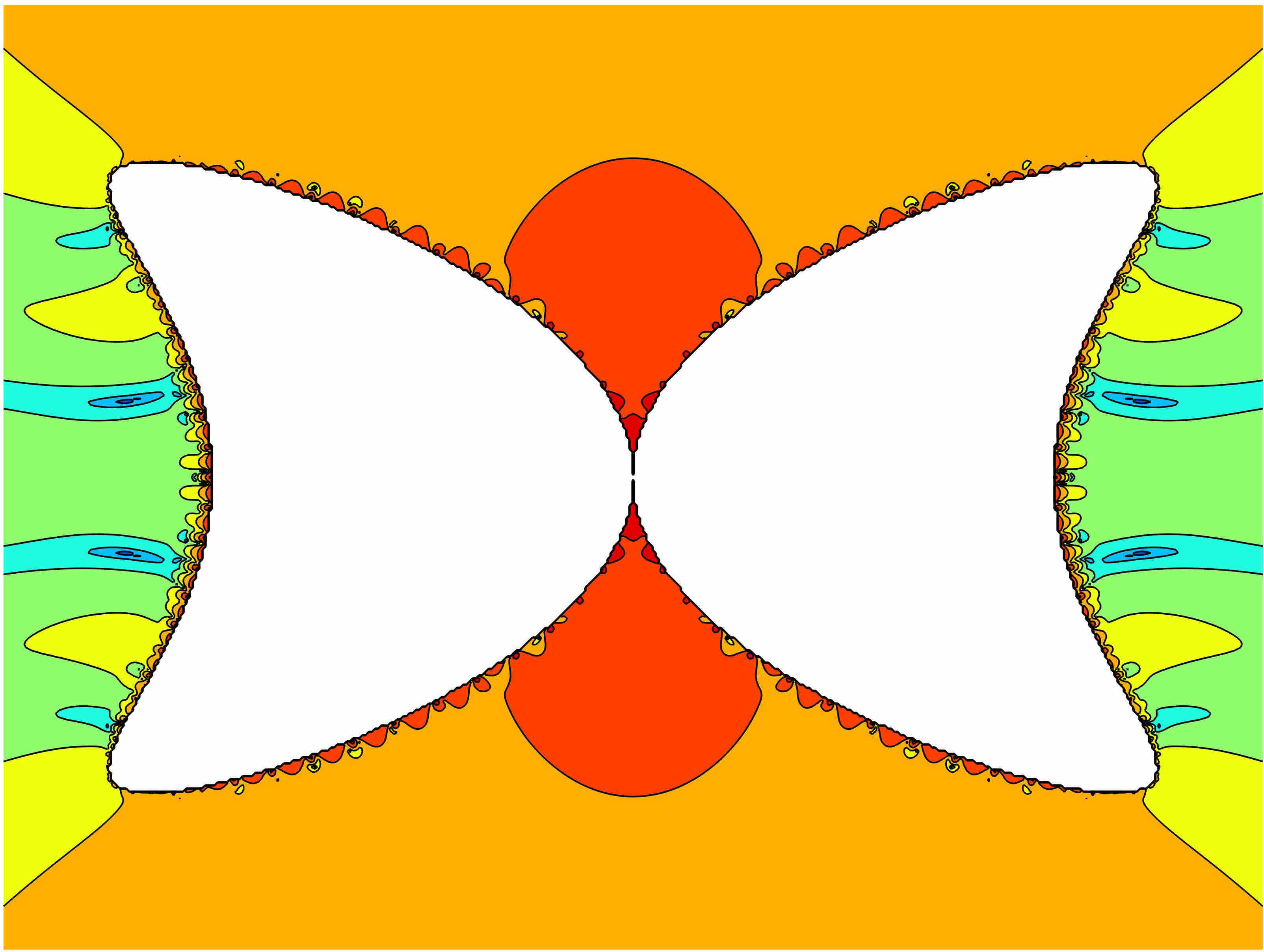}\label{8a}}\qquad
 \subfloat[][N-CFIE and smoothed combined potential]{\includegraphics[scale=0.0550]{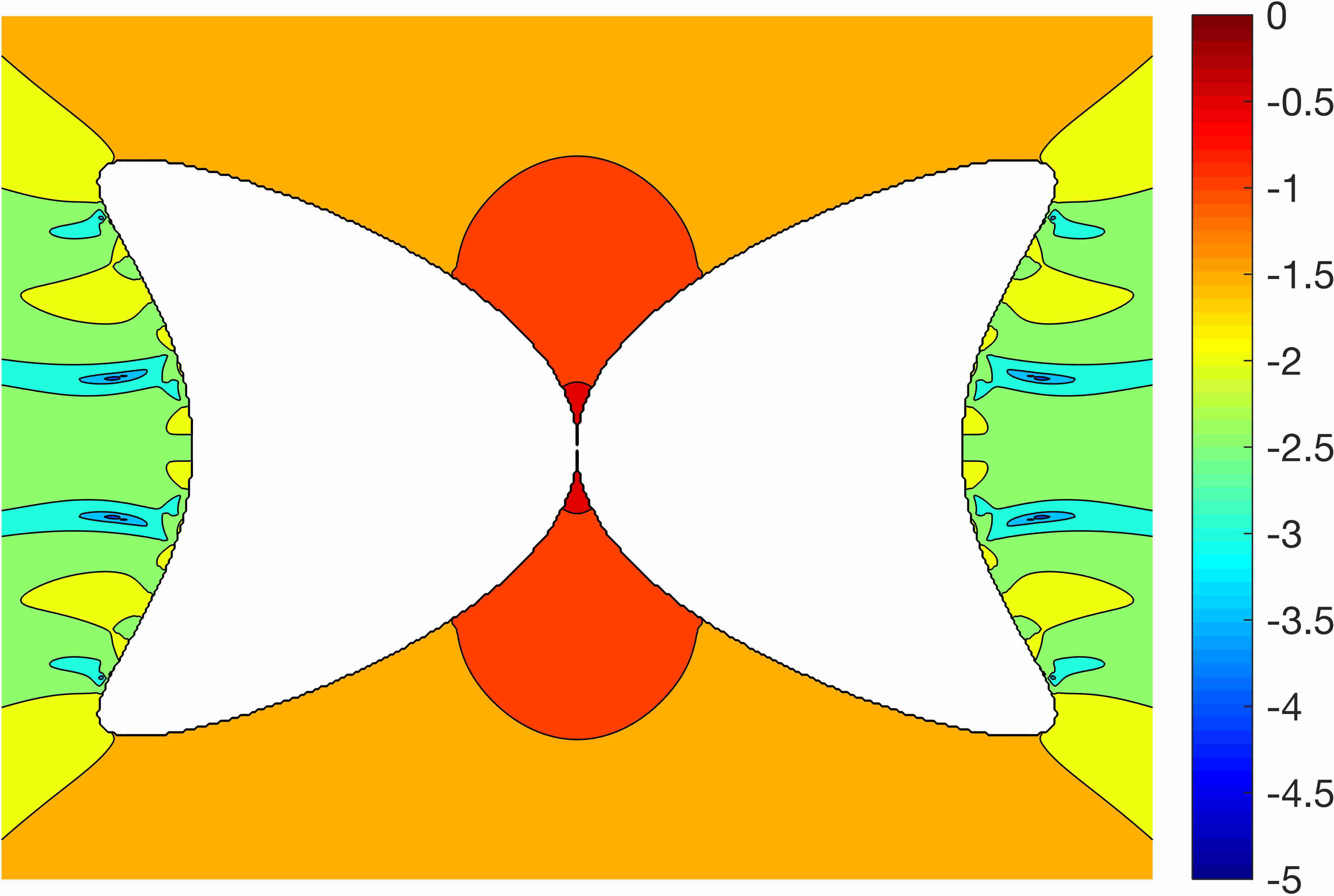}\label{8b}}\\
  \subfloat[][SN-CFIE and combined potential]{\includegraphics[scale=0.0550]{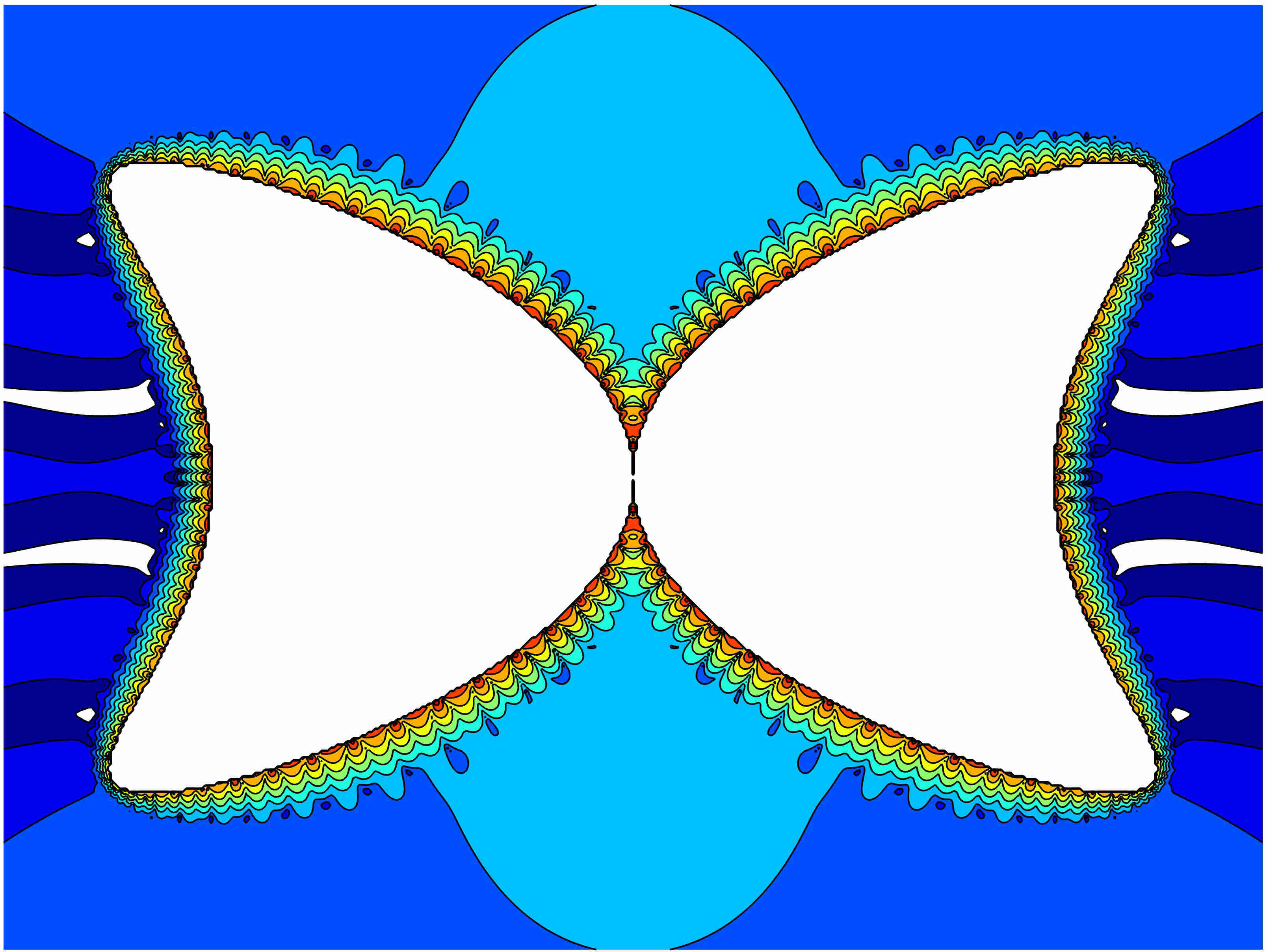}\label{8c}}\qquad
 \subfloat[][SN-CFIE and smoothed combined potential]{\includegraphics[scale=0.0550]{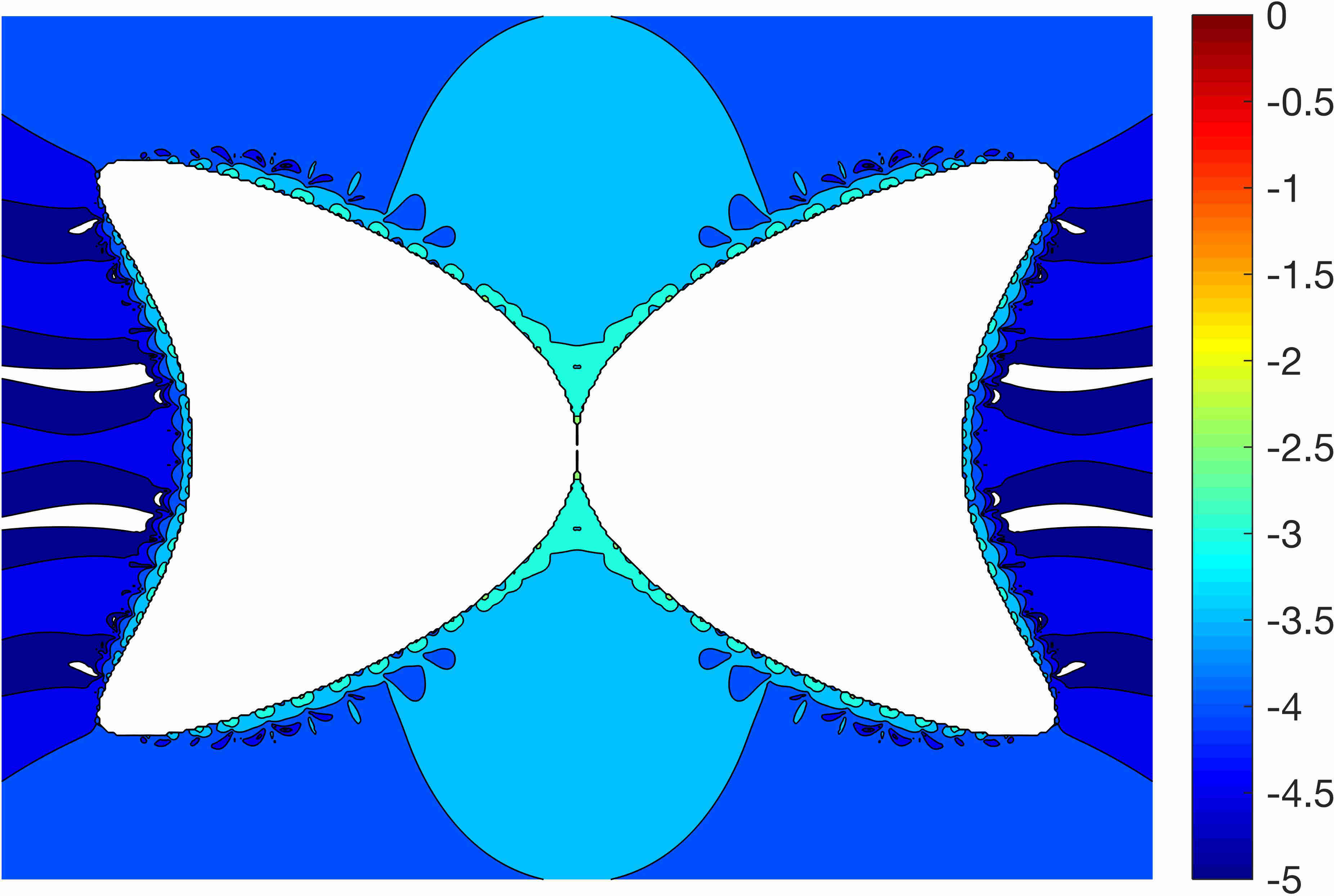}\label{8d}}
\caption{Errors ($\log_{10}|\tilde u(\nex)-u^{\mathrm{exact}}(\nex)|$) in the approximate solutions $\tilde u$ of the Neumann problem~\eqref{eq:NEP} involving two close kite-shaped obstacles separated by a distance $d=10^{-5}$.  (a) Combined potential applied to D-CFIE solution; (b) smoothed combined field potential applied to D-CFIE solution; (c) combined potential applied to SD-CFIE solution, and (d) smoothed combined potential applied to SN-CFIE solution. The integral equations were discretized using the MK Nystr\"om method with $N=2\pi/h=64$ quadrature points on each curve. The wavenumber $k=4$ was utilized in all the examples considered in these plots. }\label{fig:results_near_close_neu}
\end{figure}  
 
  \subsection{Obstacles with corners}\label{sec:corners}
 In this section we  extend  the smoothing procedure to problems of scattering involving obstacles with corners. For presentation simplicity we assume that the boundary~$\Gamma$, which is parametrized by $\bnex:[0,2\pi]\to\Gamma$, has only one corner at the point $\bnex(0)=\bnex(2\pi)$.  
 
In order to deal with the corner singularity of the integral equations solutions~(cf.~\cite{bruno2009high,grisvard2011elliptic,zargaryan1984asymptotic}) we introduce graded meshes generated by use of the change of variable $t = w(s)$, where letting $p\geq 2$ and 
\begin{eqnarray}
v(s)& =& \left(\frac{1}{p}-\frac{1}{2}\right)\left(\frac{\pi-s}{\pi}\right)^3+\frac{1}{p}\frac{s-\pi}{\pi}+\frac{1}{2},\nonumber
\end{eqnarray}
the function $w:[0,2\pi]\to[0,2\pi]$ is given by
\begin{eqnarray}
w(s)&=&2\pi\frac{[v(s)]^p}{[v(s)]^p+[v(2\pi-s)]^p}.\label{eq:change_variable}
\end{eqnarray}
Note that  $w$ is a smooth and monotonically increasing function on the interval~$[0,
2\pi]$ and its derivatives vanish algebraically at  $s=0$ and $s=2\pi$, i.e., $w^{(q)}(0) = w^{(q)}(2\pi)= 0$ for $1 \leq q \leq p -
1$. This change of variable, which was originally introduced by Kress~\cite{Kress:1990vm}, has been extensively utilized to produce high-order Nystr\"om discretizations of boundary integral equations involving domains with corners~\cite{anand2012well,dominguez2015well,turc2016well}.

Using this transformation then, the integral~\eqref{eq:integral} is expressed as
\begin{equation}\int_0^{2\pi}\left\{ A(w(s),w(\sigma))\rho_D(w(\sigma)|w(s))+B(w(s),w(\sigma))\rho_S(w(\sigma)|w(s))\rg\}w'(\sigma)\de \sigma,\quad s\in[0,2\pi].\label{eq:integral_2}\end{equation}
 Since the function  $w'(\sigma)$ in~\eqref{eq:integral_2} vanishes algebraically at $\sigma=0$ and $\sigma=2\pi$, the integrand in~\eqref{eq:integral_2} can be regarded as a $2\pi$-periodic function which exhibits a certain degree of smoothness around the endpoints $\sigma=0$ and $\sigma=2\pi$ that can be controlled  by the parameter $p$. Therefore, for $p$ sufficiently large the smoothed integral equations~\eqref{eq:IE_SDirichlet} and~\eqref{eq:IE_SNeumann} resulting from use of this change of variable can be discretized using any of the Nystr\"om methods discussed  in the previous section. 
 
Utilizing the quadrature points $s_j=\pi/n(j+1/2)$, $j=0,\ldots,2n-1$---which do not include the endpoints 0 and $2\pi$---we  obtain a linear system for the approximate the values $\psi_j$, $j=0,\ldots, 2n-1,$ of the unknown function $\psi(s)=\phi(w(s))$ at the quadrature points $s=s_j$, $j=0,\ldots, 2n-1$, respectively. 
 
 Clearly,  the Nystr\"om discretizations of the smoothed integral equations presented in the previous section require approximate expressions for 
 \begin{equation*}\begin{split}
 \rho_D(w(s_j)|w(s_i)) 
=&\  \psi(s_j) - \tilde  p_0(w(s_j)|w(s_i))\psi(s_i)-\frac{\tilde p_1(w(s_j)|w(s_i))}{|\bnex'(w(s_i))|}\frac{\psi'(s_i)}{w'(s_i)}
\end{split}
\end{equation*}
and
 \begin{equation*}\begin{split}
 \rho_S(w(s_j)|w(s_i)) 
=&\  i\eta\psi(s_j) - \p_n\tilde p_0(w(s_j)|w(s_i))\psi(s_i)-\frac{\p_n\tilde p_1(w(s_j)|w(s_i))}{|\bnex'(w(s_i))|}\frac{\psi'(s_i)}{w'(s_i)} 
\end{split}
\end{equation*} 
in terms of the discrete values $\psi_j$, $j=0,\ldots,2n-1$. (Note that $w'(s_i)\neq 0$ for all $i=0,\ldots, 2n-1$.) These expressions are here obtained by approximating $\psi'(s_i)$ using finite differences of $\psi_j$ in the case of the low-order TR  method, and using FFT-based differentiation of  $\psi_j$  in the case of the higher order MK and KR methods. High accuracy is  achieved in the latter case by utilizing the identity
$$
\psi'(s_i) = \frac{(w'\psi)'(s_i)-\psi(s_i)w''(s_i)}{w'(s_i)},
$$
which allows the desired quantity $\psi'(s_i)$ to be approximated using FFF-based differentiation of the sequences $w'(s_j)\psi_j$ and $w'(s_j)$, $j=0,\ldots,2n-1$, which are assumed periodic. 
We proceed similarly to approximate the second-order derivative $\psi''(s_i)$ that is needed for evaluation of the diagonal term $H(w(s_i),w(s_i))\rho_D(w(s_i)|w(s_i))$.

Figure~\ref{fig:results_RCFiE_corners} display the far-field errors in the solution of the problem of scattering of a horizontal plane-wave $u^\inc(\nex)=\e^{ik(x\cos\alpha+y\sin\alpha)}$, $\alpha=0$, $k=4$, by a drop-shaped obstacle parametrized by $\bnex(t)=(2\sin\frac{t}{2},-\sin t)$~\cite{Kress:1990vm} that features a convex corner (Figures~\ref{fig_SD_CFIE_drop} and~\ref{fig_SN_CFIE_drop}), and by a boomerang-shaped obstacle parametrized by $\bnex(t)=(-\frac{2}{3}\sin\frac{3t}{2},-\sin t)$~\cite{anand2012well} that features a concave (reentry) corner (Figures~\ref{fig_SD_CFIE_boom} and~\ref{fig_SN_CFIE_boom}). The smoothed integral equations (SD-CFIE~\eqref{eq:IE_SDirichlet} and SN-CFIE~\eqref{eq:IE_SNeumann}) are discretized using the TR, MK and KR-10 Nystr\"om methods described in the previous section, using the change of variable described above with the parameter value $p=4$. The error curves displayed  in these figures demonstrate the accuracy of the three Nystr\"om method considered.  The numbers of the GMRES iterations required for the solution of the resulting linear systems are comparable to those reported in Table~\ref{tab:GMRES}.
\begin{figure}[h!]
\centering	
 \subfloat[][Dirichlet problem for drop-shaped obstacle]{\includegraphics[scale=0.57]{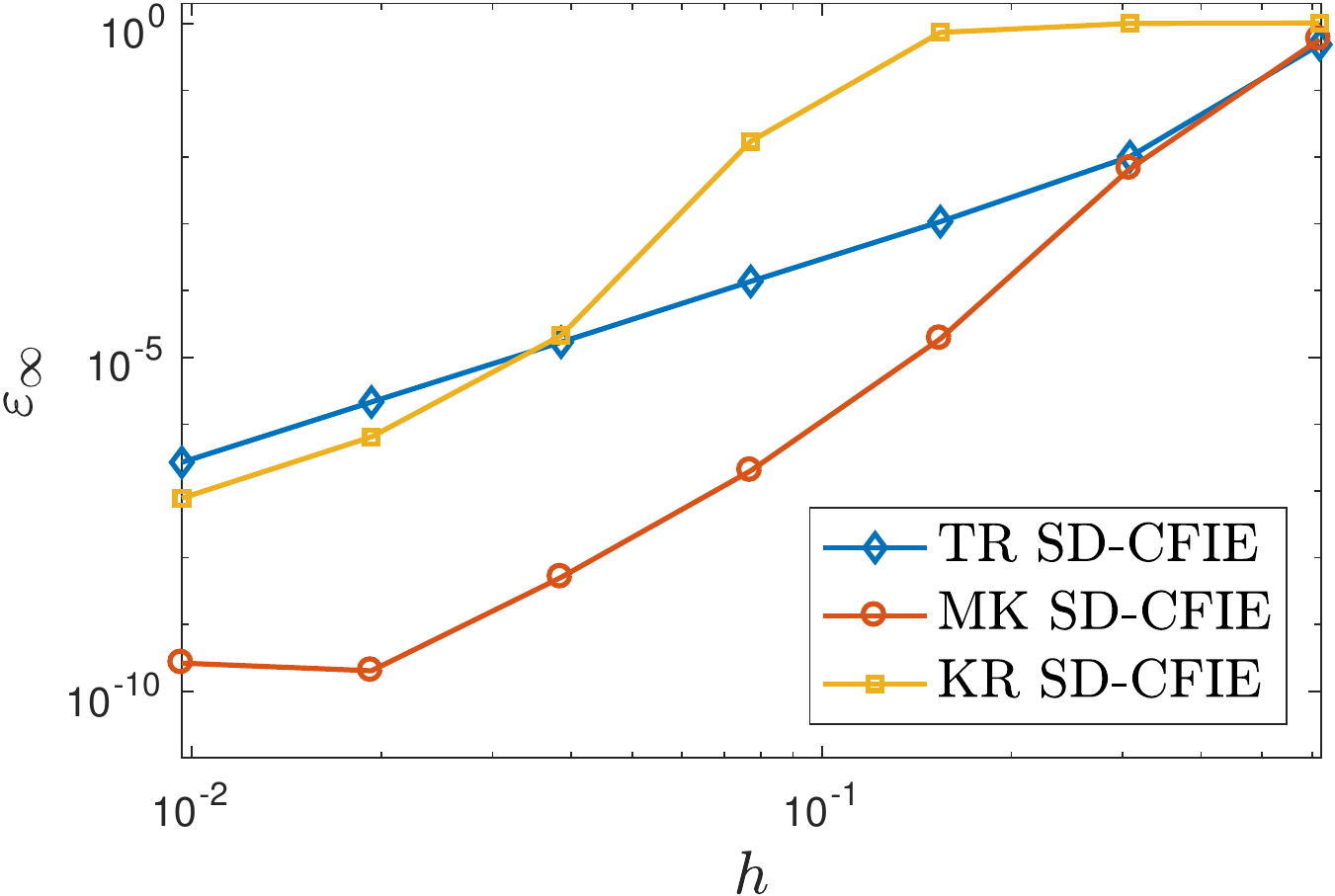}\label{fig_SD_CFIE_drop}}\quad
 \subfloat[][Neumann problem for drop-shaped obstacle]{\includegraphics[scale=0.57]{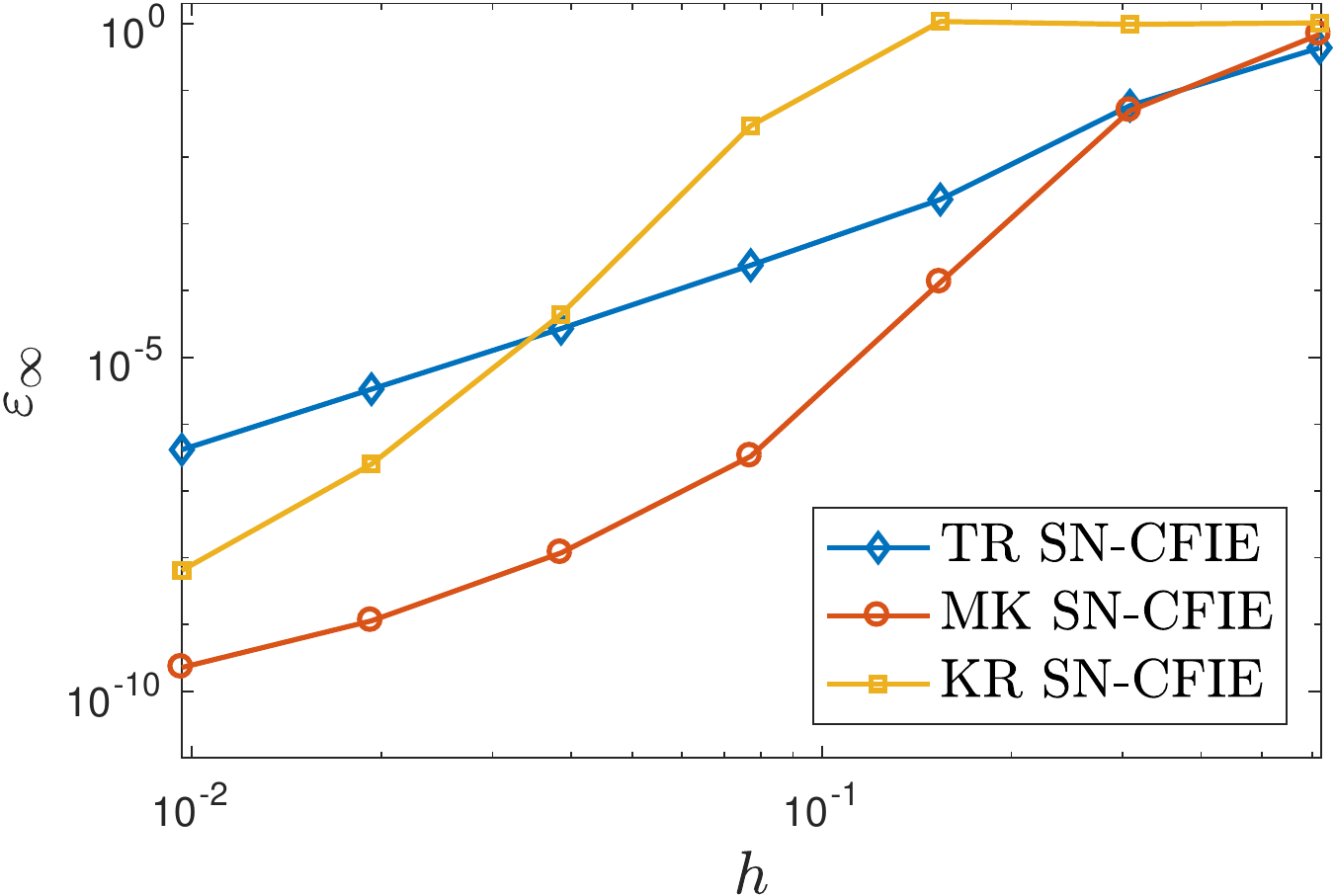}\label{fig_SN_CFIE_drop}}\\
  \subfloat[][Dirichlet problem for boomerang-shaped obstacle]{\includegraphics[scale=0.57]{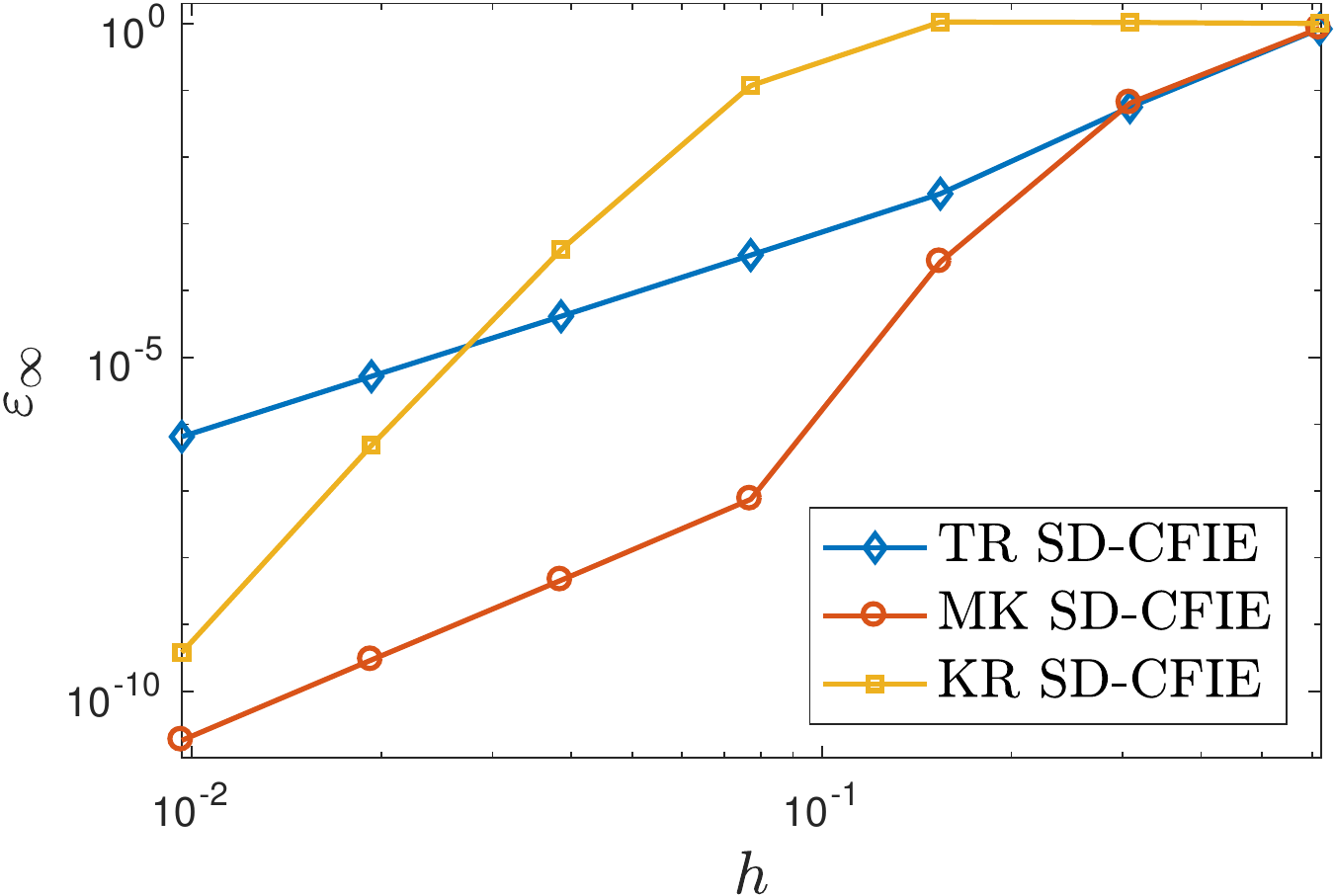}\label{fig_SD_CFIE_boom}}\quad
 \subfloat[][Neumann problem for boomerang-shaped obstacle]{\includegraphics[scale=0.57]{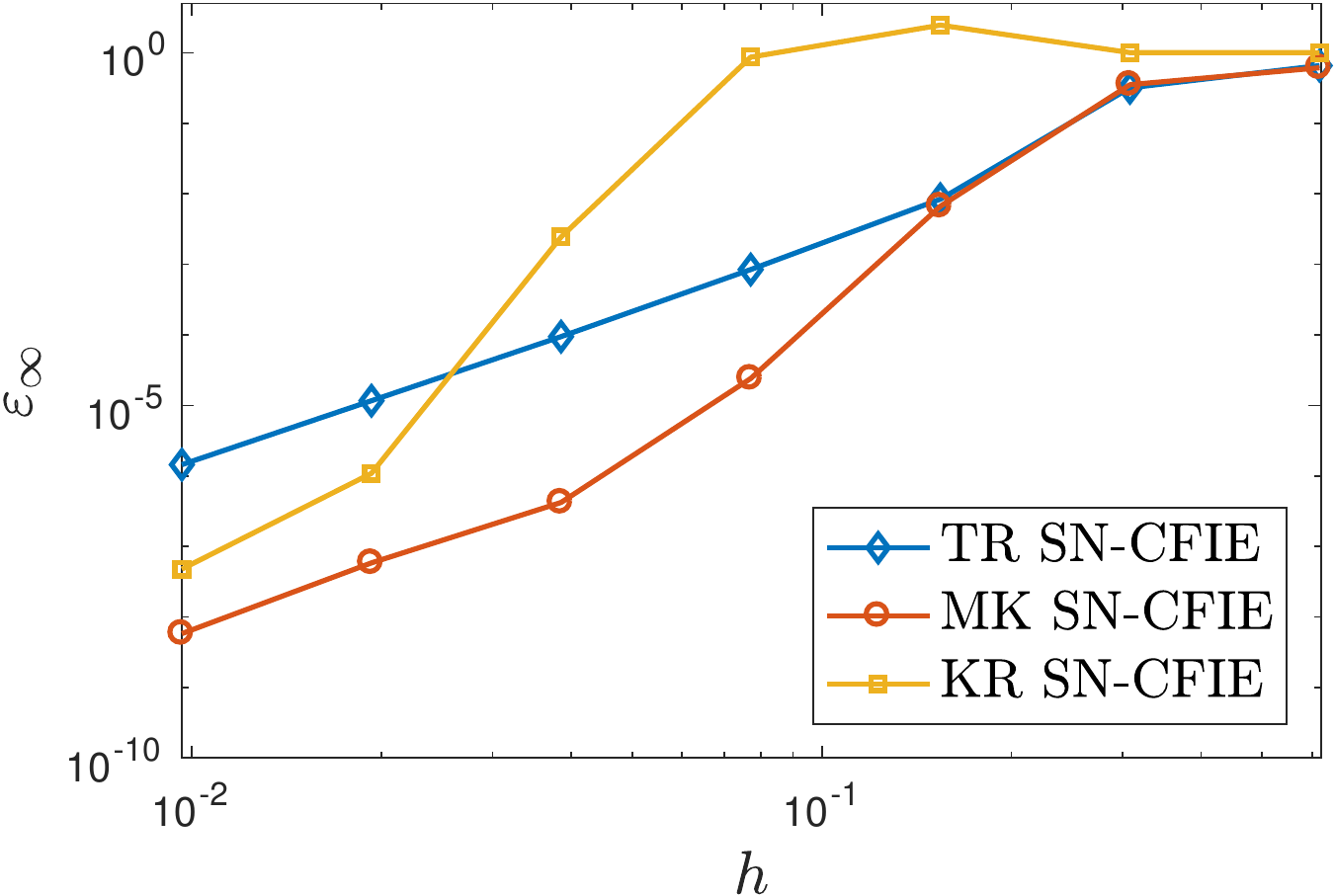}\label{fig_SN_CFIE_boom}}\\
\caption{Far-field  errors for various grid sizes $h$ obtained from various Nystr\"om discretizations of the smoothed integral equations:  Trapezoidal rule (TR), Martensen \& Kussmaul (MK) and Kapur-Rokhlin of order ten (KR-10).}\label{fig:results_RCFiE_corners}
\end{figure}  

Figure~\ref{fig:results_near_close_neu_corner} (resp. Figure~\ref{fig:results_near_close_neu_corner_boom}), finally,  displays the real part of total field (incident field plus scattered field)  for the solution of the problem of scattering of a horizontal high-frequency ($k=64$) plane-wave by a drop-shaped (resp. boomerang-shaped) obstacle with Dirichlet and Nuemann boundary conditions. The integral equation solutions were obtained via the MK Nystr\"om method and the near-field were produced through the smoothed potential~\eqref{eq:reg_pot}.

 \begin{figure}[h!]
\centering	
 \subfloat[][Drop-shaped obstacle with Dirichlet boundary condition.]{\includegraphics[scale=0.4]{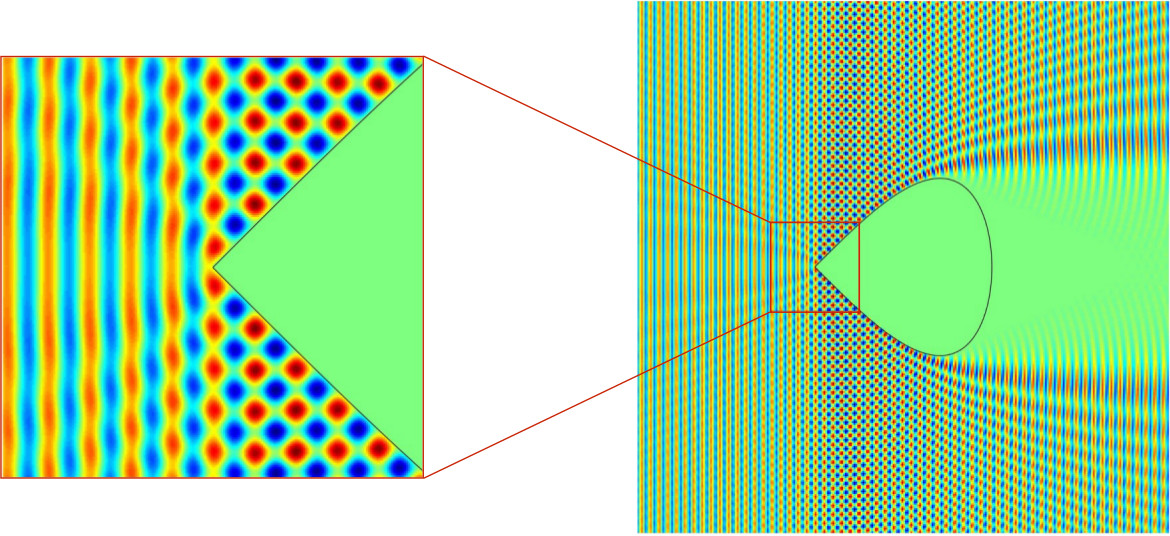}\label{10a}}\\
  \subfloat[][Drop-shaped obstacle with Neumann boundary condition.]{\includegraphics[scale=0.4]{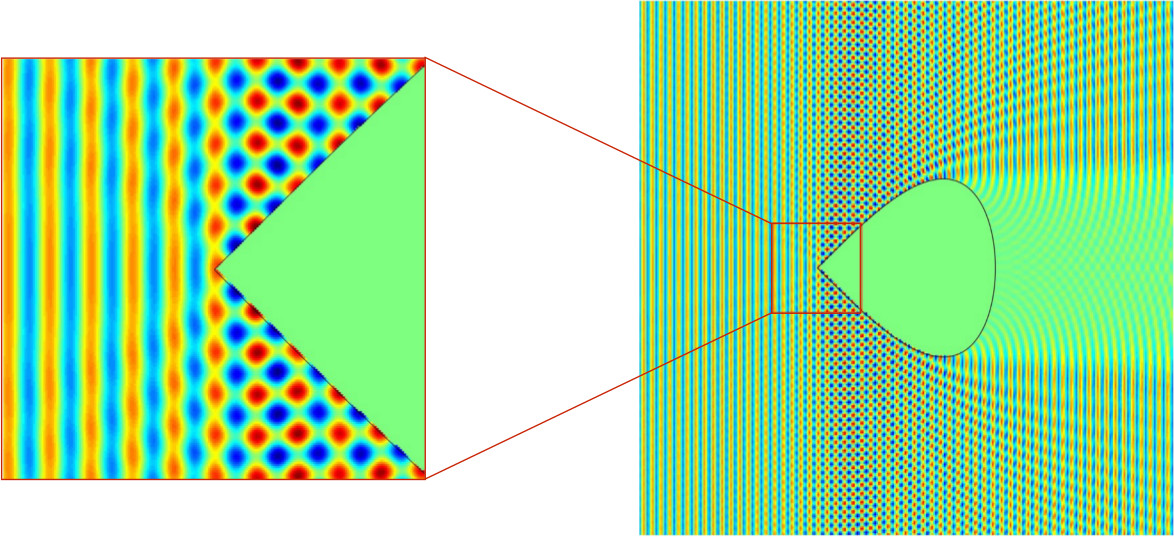}\label{10b}}
\caption{Total fields resulting from the high-frequency scattering of a horizontal plane-wave $u^\inc(\nex) = \e^{ikx}$, $k=64$, by an obstacle with a corner. A region around the corner point is zoomed-in  and displayed on the left-hand-side of the figure.}\label{fig:results_near_close_neu_corner}
\end{figure}

 \begin{figure}[h!]
\centering	
 \subfloat[][Boomerang-shaped obstacle with Dirichlet boundary condition.]{\includegraphics[scale=0.4]{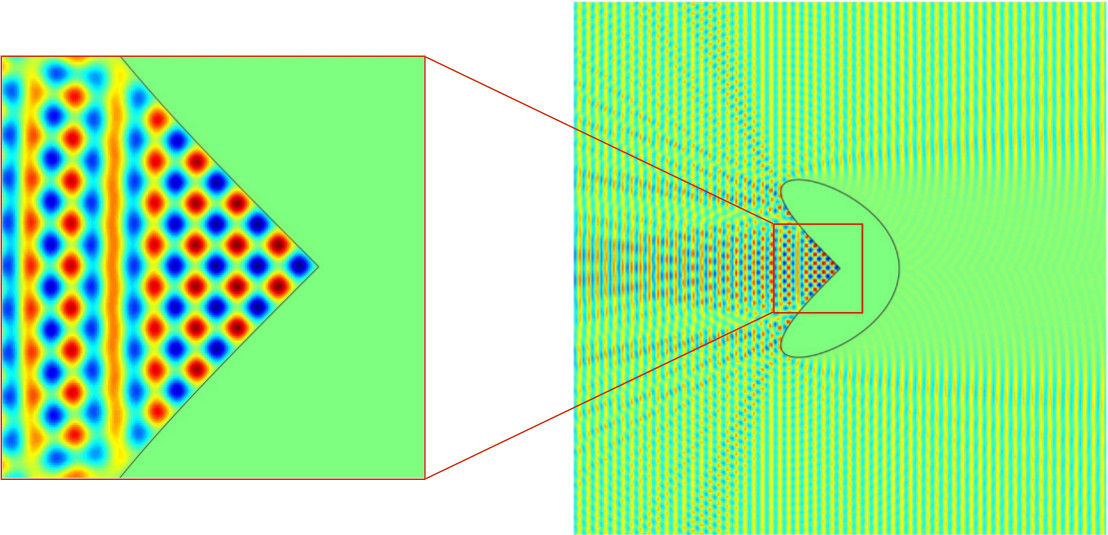}\label{11a}}\\
  \subfloat[][Boomerang-shaped obstacle with Neumann boundary condition.]{\includegraphics[scale=0.4]{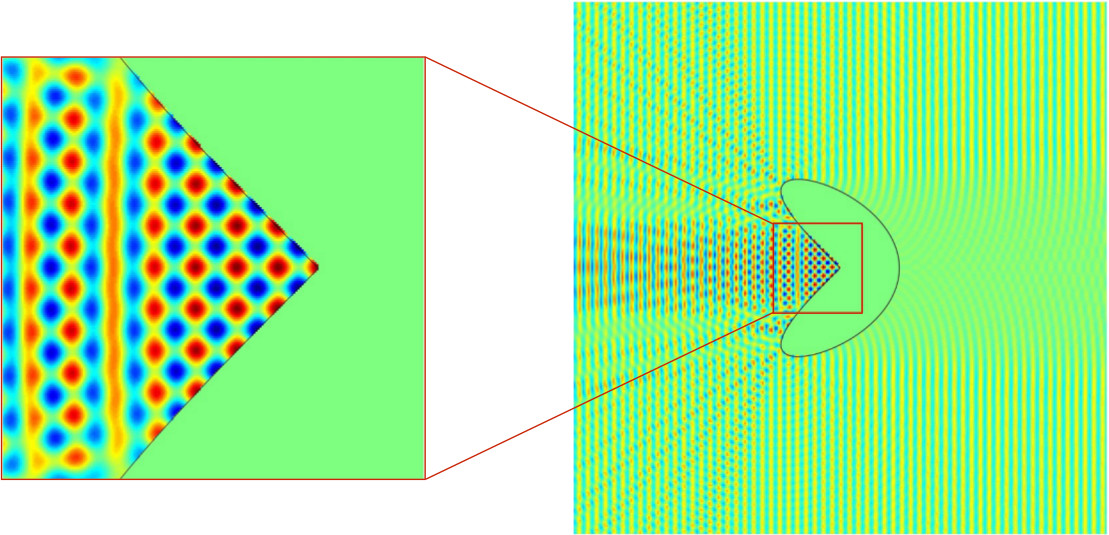}\label{11b}}
\caption{Total fields resulting from the high-frequency scattering of a horizontal plane-wave $u^\inc(\nex) = \e^{ikx}$, $k=64$, by an obstacle with a corner. A region around the corner point is zoomed-in  and displayed on the left-hand-side of the figure.}\label{fig:results_near_close_neu_corner_boom}
\end{figure}  
\section*{Acknowledgement} The author would like to thank Prof. Catalin Turc with the Department of Mathematics of the New Jersey Institute of Technology, for interesting discussion and useful comments on the subject of this manuscript.
\bibliographystyle{abbrv}

\bibliography{References}

\end{document}